\newif\ifdraft
\numberwithin{equation}{section}
\begin{document}

\title{Maximizing entropy for power-free languages}
\author{Vaughn Climenhaga}
\address{Dept.\ of Mathematics, University of Houston, Houston, TX 77204}
\email{climenha@math.uh.edu}
\urladdr{https://www.math.uh.edu/~climenha/}
\date{July 24, 2025}
\thanks{This material is based upon work supported by the National Science Foundation under Award No.\ DMS-2154378.}
\subjclass{Primary: 37D35, 68R15. Secondary: 37B10}
\keywords{Thermodynamic formalism; measure of maximal entropy; combinatorics on words; power-free words}

\begin{abstract}
A power-free language is characterized by the number of symbols used and a limit on how many times a block of symbols can repeat consecutively. For certain values of these parameters, it is known that the number of legal words grows exponentially fast with respect to length. In the terminology of dynamical systems and ergodic theory, this means that the corresponding shift space has positive topological entropy. 

We prove that in many cases, this shift space has a unique measure of maximal entropy. The proof uses a weak analogue of Bowen's specification property. The lack of any periodic points in power-free shift spaces stands in striking contrast to other applications of specification-based techniques, where the number of periodic points often has exponential growth rate given by the topological entropy.
\end{abstract}

\maketitle
\ifdraft
\thispagestyle{firstpage}
\pagestyle{fancy}
\fi

\section{Introduction and main results}%
\label{sec:rep-free}

Given a natural number $d\geq 2$ and a real number $\beta>1$, the \emph{$d$-ary $\beta$-free shift space} $\Xdb$ is the set of all bi-infinite strings $x\in \{1,\dots, d\}^\ZZ$ such that no subword of $x$ is an $\alpha$-power for any rational $\alpha\geq \beta$ (see below for precise definitions). Similarly, $\Xdbp$ consists of all $x$ that avoid $\alpha$-powers for $\alpha > \beta$. It is known that these shift spaces have positive topological entropy for many choices of $(d,\beta)$. The main result of this paper is the following.

\begin{theorem}\label{thm:main}
For every $d\geq 2$ and $\beta > 12$, the shift spaces $\Xdb$ and $\Xdbp$ both have a unique measure of maximal entropy (MME). 
\end{theorem}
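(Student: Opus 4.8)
The plan is to reduce Theorem~\ref{thm:main} to the abstract machinery of Climenhaga--Thompson for uniqueness of equilibrium states, applied to the zero potential (so that the unique equilibrium state is the MME). Write $\mathcal{L} = \mathcal{L}(\Xdb)$ for the language of the shift and let $h = h_{\mathrm{top}}(\Xdb)$; for $\beta>12$ this lies in the positive-entropy regime, so $h>0$. Recall that the Climenhaga--Thompson criterion produces a unique MME once one exhibits a decomposition $\mathcal{L} \subseteq \mathcal{C}^p \cdot \mathcal{G} \cdot \mathcal{C}^s$ in which the \emph{good core} $\mathcal{G}$ enjoys a gluing (specification) property and the prefix/suffix \emph{obstructions} $\mathcal{C}^p,\mathcal{C}^s$ grow strictly slower than the full language. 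Since the potential is zero the Bowen distortion hypothesis is automatic, so the entire burden falls on these two structural conditions. The same decomposition will serve for both $\Xdb$ and $\Xdbp$, as they differ only in whether the critical exponent $\beta$ is itself forbidden, which does not affect the combinatorial estimates below.

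First I would isolate the specification property as a \emph{gluing lemma}: for $\beta>12$ there is a constant $t=t(d,\beta)$ so that for any $u,v\in\mathcal{G}$ one can find a connecting word $g$ with $|g|\le t$ and $ugv\in\mathcal{L}$. The content is to rule out $\alpha$-powers ($\alpha\ge\beta$) that straddle one of the two junctions of $ugv$. Because $u$ and $v$ are themselves $\beta$-free, any repetition that begins inside $u$ and continues across the junction already has exponent $<\beta$ \emph{within} $u$; the role of $g$ is to prevent such an incipient period from being prolonged into $g$ and $v$. I would choose $g$ to be ``generic'' — for instance drawn from a power-free word over an auxiliary subalphabet, with its boundary symbols selected to contradict the forced period — and then use a Fine--Wilf overlapping-periods analysis to show that a straddling power of period $q$ must place a near-full copy of its period on one side, forcing a $\beta$-power already inside $u$ or $v$. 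Tracking the resulting inequalities on period length versus gap length is exactly where the numerical threshold $\beta>12$ emerges, and \textbf{this is the step I expect to be the main obstacle}: one must simultaneously keep $|g|$ bounded, retain enough freedom in the choice of $g$, and close all the overlap cases.

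Next I would define the obstructions and bound their entropy. The words excluded from $\mathcal{G}$ are precisely those whose prefix or suffix is ``too repetitive'' to permit the gluing above — essentially words ending (resp.\ beginning) in a block of exponent close to $\beta$. I would show that the number of such prefixes of length $n$ grows with rate strictly less than $h$: a word forced to carry a near-maximal power near its end has far fewer extensions than a generic legal word, and one quantifies this by comparing existing lower bounds on the growth rate of $\beta$-free languages (which bound $h$ from below) against a direct upper count of highly repetitive words. This yields $h_{\mathrm{top}}(\mathcal{C}^p\cup\mathcal{C}^s) < h$, the second Climenhaga--Thompson hypothesis, and in particular guarantees $h_{\mathrm{top}}(\mathcal{G}) = h$, so that the good core carries the full entropy.

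Finally I would feed the gluing lemma and the entropy gap into the abstract theorem to conclude uniqueness of the MME for $\Xdb$, and repeat the argument for $\Xdbp$. It is worth emphasizing how this accommodates the complete absence of periodic points noted in the abstract: the Climenhaga--Thompson uniqueness proof proceeds through a Gibbs-type upper/lower counting estimate and the convexity of the entropy functional, never through counting periodic orbits. Indeed the gluing lemma cannot be iterated to build a periodic point, since concatenating a word with itself infinitely often would manufacture an arbitrarily high power; the connecting words $g$ are thus forced to vary and never close up. This is the precise sense in which only a \emph{weak analogue} of Bowen specification is available, and verifying that this weak form still suffices for the measure-theoretic argument — rather than for the usual periodic-orbit consequences — is the conceptual point underlying the whole proof.
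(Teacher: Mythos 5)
Your high-level architecture (a Climenhaga--Thompson-type decomposition $\LLL \subset \Cp \cdot \GGG \cdot \Cs$, a gluing property on the good core, an entropy gap for the obstructions, zero potential) is indeed the paper's architecture, but both load-bearing choices inside it are wrong or missing. First, the definition of the good core. You take $\GGG$ to exclude words whose prefix or suffix has exponent ``close to $\beta$''. With that choice the gluing lemma does not follow from the junction analysis you sketch: if $t^\alpha$ straddles a junction of $ugv$, the only bound your definition gives is that it collects fewer than $\beta|t|$ symbols from each of $u$ and $v$, hence $\alpha < 2\beta + |g|/|t|$ --- no contradiction with $\beta$-freeness. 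Nor do boundary-symbol choices rescue this: Fine--Wilf synchronizes only those periods of a suffix that are at most half its length, and a word in your $\GGG$ can end in nested high-exponent blocks (say $s^{11}$ with $s = r\,t^{11}$ and $r_1 \neq t_1$) whose periodic continuations demand different symbols; with $d=2$ a single gap symbol cannot break both. The paper instead takes $\GGG$ to be the words with no prefix or suffix equal to a \emph{$4$-power} --- a threshold far below $\beta$ --- and $\Cp = \Cs = \AAA^* \cap \LLL$ with $\AAA = \{v^4 : v\in\LLL\}$. Then a straddling power collects $< 4|t|$ symbols from an end word and $< 2|t|$ from a fully crossed middle word, so gluing \emph{four} good words with three short gaps gives exponent $< 4+2+2+4 = 12$ (Lemmas \ref{lem:GGGG}--\ref{lem:GpGqGrG}). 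That accounting --- not a pair-gluing Fine--Wilf analysis --- is where the threshold $12$ actually comes from, and the same choice makes the decomposition \ref{decomp} and the entropy gap \ref{h-gap} easy, via $\#(\AAA^*)_{4k} \leq \tfrac12 (2d)^k$ and a self-contained lower bound on $h$.

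Second, even granting a pair-gluing lemma, you cannot feed it into the abstract machinery as a black box. The hypotheses of \cite{CT12} require gluing \emph{arbitrarily many} good words with uniform gaps, as in \eqref{eqn:spec}; a pair-gluing whose output lands only in $\LLL$ cannot be iterated, and there is no reason the $\GGG$ above (yours or the paper's) satisfies \eqref{eqn:spec}. Moreover, the original Climenhaga--Thompson theorem carried an extra ``extension to $\GGG$'' hypothesis that was removed only in \cite{PYY22}. The paper's fix is structural and occupies half the text: the same-length gluing \ref{spec-same} is engineered via Lemma \ref{lem:partial-spec} so that its output lies in $\GGG$ again, hence can be iterated dyadically (Lemma \ref{lem:general-spec}, yielding \ref{spec-same-2}); a separate four-word gluing \ref{spec-diff} feeds the Gibbs estimate; and a \emph{new} abstract uniqueness theorem (Theorem \ref{thm:general}), proved in \S\ref{sec:pf-gen} by adapting \cite{PYY22}, is established for exactly these weakened hypotheses. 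Without that theorem (or your own proof of it), and without a definition of $\GGG$ for which gluing is provable and iterable, your outline does not close.
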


In the course of the proof, we establish the following counting bounds.

\begin{theorem}\label{thm:card}
Given a shift space from Theorem \ref{thm:main}, let $\LLL_n$ be the set of length-$n$ words in its language (the set of extendable $\beta$-free $d$-ary words of length $n$), and $h = \lim_{n\to\infty} \frac 1n \log\#\LLL_n > 0$ its topological entropy.
Then for every $n\in \NN$, we have
\begin{equation}\label{eqn:card}
e^{nh} \leq \#\LLL_n \leq \begin{cases}
(2.892) e^{(n+2) h} &\text{if } d=2, \\
\big(1 + \frac{d}{d^3 - 2d - 1} \big)^2 e^{(n+1) h} &\text{if } d\geq 3.
\end{cases}
\end{equation}
\end{theorem}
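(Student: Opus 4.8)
The plan is to sandwich the counting sequence $a_n := \#\LLL_n$ between $e^{nh}$ and a constant multiple of $e^{(n+s)h}$, by exploiting the two directions of an approximate (sub/super)multiplicativity. The lower bound is the easy direction and rests on submultiplicativity. If $W=uv$ is an extendable $\beta$-free word with $|u|=m$ and $|v|=k$, then $u$ and $v$ are themselves extendable $\beta$-free words (a subword of a $\beta$-free word is $\beta$-free, and a prefix or suffix of an extendable word is extendable), so splitting at position $m$ gives an injection $\LLL_{m+k}\hookrightarrow \LLL_m\times\LLL_k$ and hence $a_{m+k}\le a_m a_k$. By Fekete's lemma the limit defining $h$ equals $\inf_n \frac1n\log a_n$, so $\frac1n\log a_n\ge h$ for every $n$, which is exactly $a_n\ge e^{nh}$.

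For the upper bound I would establish a reverse, approximate supermultiplicativity
\begin{equation}\label{eqn:glue}
a_m \, a_k \le C \, a_{m+k+s},
\end{equation}
where $s$ is the number of symbols one must insert to glue two words together ($s=1$ for $d\ge3$ and $s=2$ for $d=2$) and $C$ is the constant in \eqref{eqn:card}. This is precisely where the weak specification (gluing) property enters: given $u\in\LLL_m$ and $v\in\LLL_k$ one connects them into a single extendable $\beta$-free word $u\,w\,v$ using a connector $w$ of length $s$. The cleanest bookkeeping is a double count. Every $W\in\LLL_{m+k+s}$ has a unique factorization $W=u'w'v'$ with $|u'|=m$, $|w'|=s$, $|v'|=k$, in which $u'$ and $v'$ automatically lie in $\LLL_m$ and $\LLL_k$, so that $a_{m+k+s}=\sum_{(u,v)}\nu(u,v)$, where $\nu(u,v)$ is the number of admissible connectors for the pair. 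Bounding from above the multiplicity of the induced gluing map $(u,v)\mapsto u\,w\,v$, equivalently controlling how many pairs can land on one target word, yields the factor $C$ in \eqref{eqn:glue}.

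Given \eqref{eqn:glue}, the stated estimate follows from a shifted Fekete argument. Writing $c_n=\log a_n$, inequality \eqref{eqn:glue} gives $c_{m-s}+c_{k-s}\le \log C + c_{m+k-s}$, so the sequence $d_n:=c_{n-s}-\log C$ is superadditive; hence $\tfrac1n d_n\le \lim_n \tfrac1n d_n = h$, which rearranges to $c_{n-s}\le nh+\log C$, i.e.\ $a_\ell \le C\,e^{(\ell+s)h}$. This is exactly the right-hand side of \eqref{eqn:card}.

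The main obstacle is the combinatorial gluing count with the sharp constants, and this is where the hypothesis $\beta>12$ is used: one must verify that a connector of length $s\in\{1,2\}$ suffices, since producing a fresh $\alpha$-power with $\alpha\ge\beta$ across the junction requires a long repeated block that the freeness of $u$ and $v$ together with a suitable choice of $w$ can avoid, and one must then evaluate the multiplicity $C$. I expect this multiplicity to factor into independent contributions from the left and right ends of the junction, which would account for the square in the $d\ge3$ bound; each factor should be a geometric-type series in the length of the ambiguous overlap, summing to $\frac{d}{d^3-2d-1}$ and giving $C=\big(1+\frac{d}{d^3-2d-1}\big)^2$. For $d=2$ a single inserted symbol is insufficient (only two letters are available), forcing $s=2$ and a more delicate two-symbol count whose value does not simplify, producing the numerical constant $2.892$.
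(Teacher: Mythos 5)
Your lower bound is correct and is exactly the paper's argument: submultiplicativity of $\#\LLL_n$ plus Fekete's lemma gives $h=\inf_n \frac1n\log\#\LLL_n$, hence $\#\LLL_n\geq e^{nh}$. The upper bound, however, fails at its very first step. The gluing claim you rely on --- that for \emph{every} $u\in\LLL_m$ and $v\in\LLL_k$ there is a connector $w$ of fixed length $s\in\{1,2\}$ with $uwv\in\LLL$ --- is precisely Bowen's specification property \eqref{eqn:Bow-spec-2} with $\tau=s$, and this provably fails for every power-free shift and every value of $\tau$. As the paper notes right after \eqref{eqn:Bow-spec-2}, specification forces the existence of periodic points (indeed $\frac1n\log\#P_n\to h$), whereas $\Xdb$ has no periodic points at all, since a periodic sequence contains arbitrarily high powers of its repeating block. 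The concrete obstruction is pairs where $u$ ends with a high power and $v$ begins with a high power: repetitions sitting at the two sides of the junction can combine, for every choice of short connector, into a forbidden power. This is exactly why the proof of Lemma \ref{lem:GbG} needs the hypothesis $v,w\in\GGG$ (no $4$-power prefix or suffix) in order to argue that each side contributes at most $4|t|-1$ symbols to a potential power $t^\alpha$. Note also that if your gluing claim were true, your map $(u,v)\mapsto uwv$ would be injective (the pair is recovered as the length-$m$ prefix and length-$k$ suffix of the image), so you would get $C=1$; this is a sign that the constants in \eqref{eqn:card} cannot be ``gluing multiplicities.''

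What the paper actually does is restrict gluing to the collection $\GGG$ of words with no $4$-power prefix or suffix, for which same-length gluing with $\tau=1$ ($d\geq3$) or $\tau=2$ ($d=2$) does hold (Lemmas \ref{lem:GbG} and \ref{lem:GuG}); iterating this and using injectivity of the resulting map $(\GGG_n)^k\to\LLL_{kn+(k-1)\tau}$ gives $\#\GGG_n\leq e^{(n+\tau)h}$ (Lemma \ref{lem:G-leq}). General words are then handled by the decomposition \ref{decomp}, $w=u^pvu^s$ with $u^p,u^s$ concatenations of $4$-powers, which yields $\#\LLL_n\leq Qe^{(n+\tau)h}$ (Lemma \ref{lem:L-leq}), where $Q=\bigl(\sum_i\#\Cp_ie^{-ih}\bigr)\bigl(\sum_k\#\Cs_ke^{-kh}\bigr)$. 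The constants in \eqref{eqn:card} are estimates of $Q$: the two factors in the square are the prefix and suffix series (not left/right effects at a junction), and they converge because of an entropy gap between $\AAA^*$ and $X$, namely $\#(\AAA^*)_{4k}\leq\frac12(2d)^k$ versus the lower bounds $e^{4h}\geq d^3-1$ (for $d\geq3$) and $e^{8h}\geq 47$ (for $d=2$), giving the geometric sums $\frac{d}{d^3-2d-1}$ and $\frac{2}{\sqrt{47}-4}$ (whence $2.892$). This decomposition-plus-entropy-gap mechanism is the idea missing from your proposal; no argument that treats all words of $\LLL$ as glueable with a uniformly short connector can work in a shift without periodic points.
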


To define $\Xdb$ and $\Xdbp$ more carefully, we recall some basic notions from symbolic dynamics and from combinatorics on words. Let $A$ be a finite set (the \emph{alphabet}), $A^\ZZ$ the set of all bi-infinite strings of symbols in $A$, and $\sigma \colon A^\ZZ \to A^\ZZ$ the left shift map defined by $\sigma(x)_n = x_{n+1}$. Equipping $A^\ZZ$ with the product topology, a (two-sided) \emph{shift space} is a closed set $X \subset A^\ZZ$ that is shift-invariant in the sense that $\sigma(X) = X$.

A \emph{(finite) word} is an element $w\in A^n$ for some $n\in \NN \cup \{0\}$; we will write $|w| = n$ to denote the \emph{length} of $w$. The set of all finite words is $A^* := \bigcup_{n\geq 0} A^n$. Given a shift space $X \subset A^\ZZ$, the collection of words
\[
\LLL = \LLL(X) := \{ x_i x_{i+1} \cdots x_j : x\in X,\ i,j\in \ZZ \}
\]
is the \emph{language} of $X$, and we will write $\LLL_n := \LLL \cap A^n$ for the set of words of length $n$ in the language. The language $\LLL$ of a shift space is \emph{factorial} (closed under passing to subwords) and \emph{extendable} (given $w\in \LLL$, there exist $a,b\in A$ such that $awb\in \LLL$).

It is often convenient to describe a shift space and its language via a set of \emph{forbidden words}: given $\mathcal{F} \subset A^+ := \bigcup_{n\geq 1} A^n$, the 
associated shift space is
\[
X = X(\mathcal{F}) = \{ x\in A^\ZZ : x_i x_{i+1} \cdots x_j \notin \mathcal{F} \text{ for all } i,j\in \ZZ \text{ with } i\leq j \}.
\]
That is, $X(\mathcal{F})$ is the set of all infinite sequences over $A$ that have no subword in $\mathcal{F}$.

Given $n\in \NN$, a word $w\in A^n$, and a rational number $\alpha > 1$ such that $\alpha n\in \NN$, the \emph{$\alpha$-power} of $w$ is the word $w^\alpha$ formed by the first $\alpha n$ symbols of $w^\infty = wwww\cdots$. For example, we have
\[
(\mathsf{do})^{2} = \mathsf{dodo},
\qquad
\mathsf{b(an)^{5/2} = banana},
\qquad
\mathsf{(alf)^{7/3} = alfalfa}.
\]

\begin{definition}
For a natural number $d\geq 2$ and a real number $\beta>1$, the \emph{$d$-ary $\beta$-free shift space} $\Xdb$ is the shift space defined by the set of forbidden words
\begin{equation}\label{eqn:Fb}
\FFF_\beta := \{ v^\alpha : v\in A^+,\ \alpha \in [\beta,\infty) \cap \mathbb{Q} \}.
\end{equation}
In other words, $\Xdb = X(\FFF_\beta)$ consists of all $x\in A^\ZZ$ that remain after we remove all strings containing an $\alpha$-power for any $\alpha\geq \beta$.
Replacing $[\beta,\infty)$ with $(\beta,\infty)$ in \eqref{eqn:Fb} gives a set $\FFF_{\beta^+}$ of forbidden words that determines a subshift $\Xdbp$.
\end{definition}

If $\beta \in \RR \setminus \mathbb{Q}$, then $\Xdbp = \Xdb$. From now on, $\beta$ will denote either a real number or a ``rational number with a $+$''. The set of possible $\beta$ carries the obvious order, so we could write Theorem \ref{thm:main} as: ``For every $d\geq 2$ and $\beta \geq 12^+$, the shift space $\Xdb$ has a unique MME''. (In fact, for $d\geq 3$, we prove the result for $\beta = 12$ as well; see Theorem \ref{thm:get-spec}.)

\begin{remark}\label{rmk:extendable}
Given $\FFF \subset A^+$, and writing $\tilde{\LLL}(\mathcal{F})$ for the set of all $w\in A^*$ that have no subword in $\mathcal{F}$, we have $\LLL(X(\FFF)) \subset \tilde{\LLL}(\FFF)$, but the inclusion can be proper: the language $\tilde{\LLL}(\FFF)$ is factorial, but need not be extendable, and $\LLL(X(\FFF))$ is its extendable part, characterized as the set of all $w\in A^*$ that can be extended to a bi-infinite sequence $x\in A^\ZZ$ with no subword in $\mathcal{F}$. The ratio $\#\tilde{\LLL}_n / \#\LLL_n$ is subexponential in $n$ \cite{aS08}, but can be unbounded.
In particular, we stress that when $X = \Xdb$ is the $d$-ary $\beta$-free shift space, its language $\LLL$ consists of the \emph{extendable} $\beta$-free words, and it is to this set that the bounds in Theorem \ref{thm:card} apply.
\end{remark}

\subsection*{Overview of paper}

See \S\ref{sec:ent} for a discussion of entropy in the context of symbolic dynamics, and for two results that together imply Theorems \ref{thm:main} and \ref{thm:card}:
\begin{itemize}
\item Theorem \ref{thm:get-spec} formulates a weak analogue of Bowen's specification property that is satisfied by $\Xdb$, and which quickly leads to the counting estimates that establish Theorem \ref{thm:card};
\item Theorem \ref{thm:general} uses this specification-type property to obtain uniqueness of the MME, proving Theorem \ref{thm:main}.
\end{itemize}
Theorems \ref{thm:get-spec} and \ref{thm:general} are proved in \S\ref{sec:power-free} and \S\ref{sec:pf-gen}, respectively.

See \S\ref{sec:lit} for a review of some prior work on power-free languages, and a description of the set of $(\beta,d)$ for which $\Xdb$ is known or conjectured to have positive entropy. This set is rather larger than the set of parameters to which Theorem \ref{thm:main} applies.
See \S\ref{sec:open} for a discussion of this issue, and some other open problems.
Similarly, the bounds in Theorem \ref{thm:card} are not optimal. Both of these main results should be regarded in some sense as ``proofs of concept'' demonstrating the utility of specification-type techniques in this setting, rather than as attempts to wring out the strongest possible results from these ideas: in order to present the main ideas as efficiently and readably as possible, no serious attempt has been made here to push these techniques to the fullest extent.

\section{Background, definitions, and related questions}\label{sec:def}

\subsection{Entropy}\label{sec:ent}

We describe the main concepts we will need from thermodynamic formalism, as they appear in symbolic dynamics. See \cite{pW82} for an introduction to entropy and the variational principle, and \cite{CT21} for a survey of the development of specification-type properties in the study of thermodynamic formalism.

Given a finite alphabet $A$ and a shift space $X \subset A^\ZZ$, recall that $\LLL \subset A^*$ denotes the language of $X$, with $\LLL_n = \LLL \cap A^n$ denoting the set of words of length $n$ in the language. Equivalently, $\LLL_n$ is the set of words $w\in A^n$ for which the \emph{cylinder}
\[
[w] := \{ x\in X : x_1 \cdots x_n = w \}
\]
is nonempty. The \emph{topological entropy} of $X$ is
\begin{equation}\label{eqn:h}
h = h(X) := \lim_{n\to\infty} \frac 1n \log \#\mathcal{L}_n,
\end{equation}
where the limit exists by Fekete's lemma since $\#\LLL_{n+k} \leq (\#\LLL_n)(\#\LLL_k)$.

An \emph{invariant measure} on $X$ is a Borel probability measure $\mu$ such that 
for every Borel set $E\subset X$, we have $\mu(\sigma^{-1}E) = \mu(E)$.
The \emph{measure-theoretic entropy} of $\mu$ is
\begin{equation}\label{eqn:hmu}
h_\mu := \lim_{n\to\infty} \frac 1n H_n(\mu),
\quad\text{where } H_n(\mu) := \sum_{w\in \LLL_n} -\mu([w]) \log \mu([w]);
\end{equation}
existence of the limit is again a consequence of Fekete's lemma and the subadditivity property $H_{n+k}(\mu) \leq H_n(\mu) + H_k(\mu)$.
Writing $\MMM_\sigma(X)$ for the space of invariant measures, the \emph{variational principle} says that
\begin{equation}\label{eqn:vp}
h(X) = \sup \{ h_\mu : \mu \in \MMM_\sigma(X) \}.
\end{equation}
A measure achieving the supremum is called a \emph{measure of maximal entropy} (MME). Every shift space on a finite alphabet has an MME because the function $\mu \mapsto h_\mu$ is upper semi-continuous in the weak* topology, in which $\MMM_\sigma(X)$ is compact.

It is an important question in ergodic theory and thermodynamic formalism to understand for which classes of shift spaces the MME is unique, and to describe its properties; see \S\ref{sec:open} for more. There are examples of shifts with multiple MMEs, but uniqueness is known to hold for every topologically transitive subshift of finite type (including the full shift), every transitive sofic shift, and many other examples.

We will focus our attention on a variant of Rufus Bowen's \emph{specification} property. For a shift space, this property reduces to the following:\footnote{Here we use the notation $w^{(j)}$ for an element of an indexed set of words, reserving the notation $w^j$ for the power of $w$ obtained by concatenating $j$ copies, and the notation $w_j$ for the $j$th symbol of a word $w$. We point out that there are many versions of the specification property in the literature; see \cite{kY09,KLO16} for overviews. The property in \eqref{eqn:Bow-spec} is a uniform specification property that imposes no \emph{a priori} requirement on existence of a periodic shadowing orbit.}
\begin{multline}\label{eqn:Bow-spec}
\text{there exists $\tau \in \NN$ such that for every $k\in \NN$ and $w^{(1)},\dots, w^{(k)} \in \LLL$,} \\
\text{there exist $u^{(1)},\dots, u^{(k-1)} \in \LLL_\tau$ such that $w^{(1)} u^{(1)} w^{(2)} \cdots u^{(k-1)} w^{(k)} \in \LLL$.}
\end{multline}
It can quickly be seen that this is equivalent to: \begin{multline}\label{eqn:Bow-spec-2}
\text{there exists $\tau\in \NN$ such that for every $v,w\in \LLL$,}\\
\text{there exists $u\in \LLL_\tau$ such that $vuw\in \LLL$.}
\end{multline}
Every topologically mixing sofic shift (in particular, every topologically mixing subshift of finite type) has specification. Bowen proved \cite{Bow75} that every shift space with specification has a unique MME.

Specification produces periodic orbits: writing $P_n := \{ x\in X : \sigma^n(x) = x\}$, one can prove that $\frac 1n \log \# P_n \to h$ whenever \eqref{eqn:Bow-spec-2} is satisfied.\footnote{To see this, use \cite{aB88} to get a synchronizing word $v$, and then for each $w\in \LLL_n$, use \eqref{eqn:Bow-spec} to get $u,u' \in \LLL_\tau$ such that $vuwu'v \in \LLL$, which implies $x := (vuwu')^\infty \in P_{n+2\tau+|v|}$.}
Since the power-free shifts $\Xdb$ have no periodic orbits, they cannot satisfy Bowen's specification property. To formulate a weaker property that they do satisfy, recall
that a \emph{prefix} of $w\in A^n$ is a word of the form $w_{[1,k]} := w_1 w_2 \cdots w_k$ for some $k \in \{1,\dots, n\}$, and a \emph{suffix} of $w$ is a word of the form $w_{[k,n]}$. The following will be proved in \S\ref{sec:power-free}.

\begin{theorem}\label{thm:get-spec}
Fix $(d,\beta)$ such that $d\geq 3$ and $\beta \geq 12$, or $d=2$ and $\beta \geq 12^+$.
Let $X = \Xdb$ be the corresponding $d$-ary $\beta$-free shift, and $\LLL \subset \{1,\dots,d\}^*$ its language. Define $\Cp,\GGG,\Cs \subset \LLL$ as follows:\footnote{Here $\mathcal{A}^*$ uses the Kleene star to denote arbitrary finite (possibly empty) concatenations of words from $\mathcal{A}$, just as $A^*$ denoted arbitrary concatenations of symbols from the alphabet.}
\begin{equation}\label{eqn:CGC}
\begin{gathered}
\GGG := \{ w \in \LLL : \text{no prefix or suffix of $w$ is a $4$-power} \}, \\
\Cp = \Cs := \AAA^* \cap \LLL,
\quad\text{where }
\AAA := \{ v^4 : v\in \LLL\}.
\end{gathered}
\end{equation}That is, $\Cp = \Cs$ is the set of words in $\LLL$ that can be written as a concatenation of $4$-powers.
Then the following are true.
\begin{enumerate}[leftmargin=*, label=\upshape{(\Roman{*})}]
\item\label{decomp}
(Decomposition):
for every $w\in \LLL$, there exist $v\in \GGG$ and $u^{p,s} \in \mathcal{C}^{p,s}$ such that $w=u^p v u^s$.
\item\label{h-gap} 
(Entropy gap):
$h(\Cp \cup \Cs) := \varlimsup_{n\to\infty} \frac 1n \log \#(\CCC^p \cup \CCC^s)_n < h(X)$.
\item\label{spec-diff} 
(Variable-length specification):
There exists $T\in \NN$ such that for all $u,v,w,x\in \GGG$, there are $p,q,r \in \LLL_{T}$ such that $upvqwrx\in \LLL$.
\item\label{spec-same} 
(Same-length specification): There exists $\tau\in \NN$ such that for every $v,w\in \GGG$ with $|v| = |w|$, there exists $u\in \LLL_\tau$ such that $vuw\in \GGG$.
\end{enumerate}
Moreover, $\#\LLL_n$ satisfies the bounds in \eqref{eqn:card}.
\end{theorem}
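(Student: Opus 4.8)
The four structural properties in \ref{decomp}--\ref{spec-same} are largely independent, and the counting bounds \eqref{eqn:card} are a formal consequence of them, so the plan is to prove \ref{decomp}--\ref{spec-same} first and harvest \eqref{eqn:card} at the end. Property \ref{decomp} is the easiest and I would do it by greedy stripping: given $w\in\LLL$, let $u^p$ be the longest prefix of $w$ lying in $\AAA^*$, write $w=u^p w'$, and let $u^s$ be the longest suffix of $w'$ lying in $\AAA^*$, so that $w=u^p v u^s$. Maximality of $u^p$ (resp.\ of $u^s$) forces $v$ to have no $4$-power prefix (resp.\ suffix), whence $v\in\GGG$; since all three factors are subwords of $w$ they lie in $\LLL$. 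The decomposition is moreover \emph{unique}, so the map $w\mapsto(u^p,v,u^s)$ is injective -- this is what makes it usable for counting.

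For the entropy gap \ref{h-gap} I would bound $\#\Cp_n$ by a ``compression'' estimate: a concatenation $v_1^4\cdots v_k^4$ of total length $n$ is determined by the base words $v_1,\dots,v_k$ (of total length $n/4$) together with the composition $(|v_1|,\dots,|v_k|)$, which gives $\#\Cp_n\le d^{n/4}2^{n/4}$ and hence $h(\Cp)\le\frac14\log(2d)$. Quantitatively, since $\#(\AAA\cap\LLL_{4m})\le d^m$, counting ordered factorizations yields $\sum_{n\ge0}\#\Cp_n e^{-nh}\le(1-x)/(1-2x)$ with $x=de^{-4h}$, the geometric series that will produce the explicit constants below. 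The one genuine input is an entropy \emph{lower} bound: I need $h>\frac14\log(2d)$ (so that $x<\tfrac12$, the series converges below $1$, and the gap $h(\Cp)<h(X)$ is strict), and for the sharp constants the stronger $e^{4h}\ge d^3-1$, i.e.\ $x\le d/(d^3-1)$. I would supply this from an explicit exponentially large subfamily of $\beta$-free words (or quote the known growth-rate estimates); this is the only place where the \emph{size} of $h$, not merely its positivity, enters.

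The specification properties are the heart of the matter, and I would derive both from one principle: \emph{glue with $4$-power-free bridges}, which exist in abundance over every alphabet (cube-free words already suffice, and any $4$-power-free word is automatically $\beta$-free). The role of the constant $4$ and of $\beta>12=3\cdot 4$ is the following counting of overlaps. If a new $\alpha$-power $z^\alpha$ with $\alpha\ge\beta$ appears across a junction, then each piece of the concatenation that it meets contributes fewer than $4|z|$ symbols to it: a bridge does because it is $4$-power-free; and a word of $\GGG$ does because an overlap of length $\ge 4|z|$ would be a $4$-power occurring as a prefix, as a suffix, or (if the $\GGG$-word is completely covered) again as a prefix, all forbidden. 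For \ref{spec-same} there are only the three pieces $v,u,w$, so $\alpha|z|<3\cdot4|z|=12|z|$ forces $\alpha<12\le\beta$; thus $vuw\in\LLL$ automatically, and a short argument dodging the $O(1)$ forced prefixes and suffixes upgrades $vuw$ to $\GGG$. The main obstacle is \ref{spec-diff}, where the seven pieces of $upvqwrx$ let a power cross several bridges and the three-piece bound fails; I expect this to be the crux. The resolution I would pursue is that any forbidden power must \emph{completely cover} one of the content words $u,v,w,x$ (otherwise it is trapped between two consecutive bridges and again spans fewer than $12$ periods). A completely covered content word is a $\beta$-free word carrying a period exceeding a quarter of its length, and by the Fine--Wilf theorem a power-free word has only boundedly many such long periods; each admissible period, with its essentially forced alignment, pins the offending bridges to specific values. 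Hence the number of ``bad'' bridge-triples is bounded \emph{independently of the lengths} of $u,v,w,x$, so once $T$ is a large enough constant the $\#\LLL_T^{\,3}\approx e^{3Th}$ available triples contain a good one. Establishing this length-independent bound, which is exactly what Fine--Wilf provides, is the step I would budget the most care for.

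Finally I would read off \eqref{eqn:card}. The lower bound $\#\LLL_n\ge e^{nh}$ is immediate from submultiplicativity (Fekete). For the upper bound, \ref{spec-same} makes $\#\GGG$ supermultiplicative up to the bridge length $\tau$, namely $\#\GGG_{m+m'+\tau}\ge\#\GGG_m\#\GGG_{m'}$; passing to $g_n=\#\GGG_{n-\tau}$ removes the gap, so $g$ is genuinely supermultiplicative and $\#\GGG_n\le e^{(n+\tau)h}$. Combining this with the unique decomposition \ref{decomp},
\[
\#\LLL_n\le\sum_{a+b+c=n}\#\Cp_a\,\#\GGG_b\,\#\Cs_c\le e^{(n+\tau)h}\Big(\sum_{a\ge0}\#\Cp_a e^{-ah}\Big)^2,
\]
and inserting the series from \ref{h-gap} gives $\#\LLL_n\le\big(1+\tfrac{d}{d^3-2d-1}\big)^2 e^{(n+\tau)h}$ with $\tau=1$ for $d\ge3$; the binary case has $\tau=2$ and the slightly larger constant $2.892$ coming from the weaker estimate available when $\beta=12^+$.
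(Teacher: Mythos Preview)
Your handling of \ref{decomp} matches the paper, and your derivation of the counting bounds from \ref{spec-same} and the decomposition is essentially the paper's (with one caveat: \ref{spec-same} only glues words of \emph{equal} length, so your claim $\#\GGG_{m+m'+\tau}\ge\#\GGG_m\#\GGG_{m'}$ is unjustified for $m\ne m'$; the paper iterates the equal-length case to get $(\#\GGG_n)^k\le\#\LLL_{k(n+\tau)}$ and sends $k\to\infty$).

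The real gap is in \ref{spec-diff}. Your claim that ``by the Fine--Wilf theorem a power-free word has only boundedly many such long periods'' is false: Fine--Wilf constrains only periods up to $\lceil|v|/2\rceil$, and the periods in $(|v|/2,|v|]$ (equivalently, borders of length $<|v|/2$) are not uniformly bounded for words in $\GGG$. Worse, when the offending power covers two adjacent content words $v,q,w$, the admissible range for the period $|z|$ has width of order $\min(|v|,|w|)$, so the number of potentially bad values of $q$ grows with the word lengths and no fixed $T$ can win by counting alone. The paper resolves \ref{spec-diff} by a different mechanism: it takes $T=1$ ($T=2$ for $d=2$) and uses exactly the Fine--Wilf ``blocking'' idea you already invoke for \ref{spec-same}, choosing each bridge symbol so as to break every period $\ell\le\lceil|v|/2\rceil$ of the adjacent content word. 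The payoff is that any power extending past both endpoints of a content word $v$ is forced to have period $|z|>|v|/2$, so a completely covered content word contributes fewer than $2|z|$ symbols rather than $4|z|$; summing over at most two partially covered end words ($<4|z|$ each), at most two fully covered middle words ($<2|z|$ each), and three one-symbol bridges yields $|s|\le 12|z|-3$ directly, with no counting argument needed.

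Finally, for the entropy lower bound feeding \ref{h-gap} and the explicit constants in \eqref{eqn:card}, you propose importing an external estimate. The paper instead bootstraps this from \ref{spec-same}: the bound $\#\GGG_n\le e^{(n+\tau)h}$ applied at $n=4$ gives $d^4-d=\#\GGG_4\le d\,e^{4h}$, hence $e^{4h}\ge d^3-1$, which is precisely what produces the constant $(1+\tfrac{d}{d^3-2d-1})^2$; for $d=2$ one computes $\#\GGG_8\ge 188$ to get $e^{8h}\ge 47$. This self-contained derivation both avoids reliance on outside growth estimates and delivers exactly the stated constants.
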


Conditions \ref{decomp} and \ref{h-gap} require a decomposition of the language in which the prefix and suffix collections $\Cp$ and $\Cs$ do not carry full entropy. Conditions \ref{spec-diff} and \ref{spec-same} are weakenings of the specification properties \eqref{eqn:Bow-spec} and \eqref{eqn:Bow-spec-2}. In particular, shifts with specification satisfy \ref{decomp}--\ref{spec-same} by taking $\Cp=\Cs=\emptyset$ and $\GGG = \LLL$.

We will prove that every shift space satisfying \ref{decomp}--\ref{spec-same} has a unique MME.  Our argument will use a slightly weaker version of condition \ref{spec-same}: the following is proved in \S\ref{sec:spec-growth} by iterating \ref{spec-same}.

\begin{lemma}\label{lem:general-spec}
Let $\LLL$ be the language of a shift space $X$, and suppose that $\GGG \subset \LLL$ satisfies \ref{spec-same}. Then $\GGG$ satisfies the following.
\begin{enumerate}[leftmargin=*, label=\upshape{(\Roman{*}$'$)}]\setcounter{enumi}{3}
\item\label{spec-same-2}
There exists $\tau\in \NN$ such that for every $k,n \in \NN$ and $w^{(1)},\dots, w^{(k)} \in \GGG_n$,
there exist $u^{(1)},\dots, u^{(k-1)} \in \LLL_{\tau}$ such that $w^{(1)} u^{(1)} w^{(2)} u^{(2)} \cdots u^{(k-1)} w^{(k)} \in \LLL$.
\end{enumerate}
\end{lemma}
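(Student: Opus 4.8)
The plan is to prove Lemma \ref{lem:general-spec} by induction on $k$, using property \ref{spec-same} repeatedly to glue together consecutive words. The key observation is that \ref{spec-same} lets us insert a gap word $u \in \LLL_\tau$ between any two \emph{equal-length} words $v, w \in \GGG$ so that $vuw \in \GGG$ (and in particular $vuw \in \LLL$). Since all the $w^{(i)}$ in the statement share the same length $n$, this is exactly the situation \ref{spec-same} addresses. The same $\tau$ from \ref{spec-same} should work as the $\tau$ in \ref{spec-same-2}.

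The induction would proceed as follows. For the base case $k=1$ there is nothing to glue, so the single word $w^{(1)} \in \GGG_n \subset \LLL$ suffices. For the inductive step, the naive approach would be to assume we have built the concatenation $W := w^{(1)} u^{(1)} \cdots u^{(k-2)} w^{(k-1)} \in \LLL$ and then try to append $u^{(k-1)} w^{(k)}$. The obstacle here is that \ref{spec-same} requires \emph{both} glued words to lie in $\GGG$ and to have the same length, but $W$ is a long concatenation that need not lie in $\GGG_n$. To get around this, I would instead build the concatenation from the left, keeping track only of the fact that it lies in $\LLL$: the crucial point is that property \ref{spec-same} guarantees $w^{(k-1)} u^{(k-1)} w^{(k)} \in \GGG$, and the suffix $w^{(k)}$ of this word is precisely what we need to continue. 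So the correct formulation of the inductive hypothesis is that there is a word $W \in \LLL$ of the form $w^{(1)} u^{(1)} \cdots u^{(k-1)} w^{(k)}$ with the property that its rightmost block equal to $w^{(k)}$ is already in $\GGG_n$ — which it is, since $w^{(k)} \in \GGG_n$ by hypothesis.

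More carefully, the mechanism is this: given two length-$n$ words $a, b \in \GGG_n$, \ref{spec-same} produces $u \in \LLL_\tau$ with $aub \in \GGG$. Because $\GGG$ is (by definition in \eqref{eqn:CGC}) a subset of $\LLL$ and $\LLL$ is factorial, any subword of an element of $\LLL$ is again in $\LLL$; but more importantly, I want to show that longer concatenations stay in $\LLL$. The clean inductive statement is: for every $k$ and every $w^{(1)},\dots,w^{(k)} \in \GGG_n$, there exist $u^{(1)},\dots,u^{(k-1)} \in \LLL_\tau$ such that the full concatenation lies in $\LLL$. Assuming this for $k-1$, take the words $w^{(1)},\dots,w^{(k-1)}$ and apply the hypothesis to obtain a concatenation $W = w^{(1)}u^{(1)}\cdots u^{(k-2)}w^{(k-1)} \in \LLL$. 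Separately, apply \ref{spec-same} to the pair $w^{(k-1)}, w^{(k)} \in \GGG_n$ to get $u^{(k-1)} \in \LLL_\tau$ with $w^{(k-1)} u^{(k-1)} w^{(k)} \in \GGG \subset \LLL$. The full word we want is $W u^{(k-1)} w^{(k)}$, which is obtained by overlapping these two words along their common block $w^{(k-1)}$.

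The main obstacle, then, is the gluing lemma needed to conclude that overlapping two legal words along a shared block again gives a legal word. Concretely: if $x y \in \LLL$ and $y z \in \LLL$ where the shared block $y$ is long enough, does $x y z \in \LLL$? For a general shift space this is false, but for the $\beta$-free shifts the overlap $y = w^{(k-1)}$ has length $n$, which can be taken arbitrarily large, and legality of $xyz$ is a property that can be checked on windows of bounded size relative to the structure of the forbidden set. I would argue that since any forbidden power occurring in $xyz$ but not in $xy$ or $yz$ would have to straddle across the entire block $y$, and such a straddling power would already force a power-free violation detectable within one of the two known-legal words once $|y| = n$ is large relative to $\tau$; this is precisely the kind of overlap argument that \ref{spec-same} is designed to support, and I expect the details to reduce to the observation that \ref{spec-same} itself already certifies $w^{(k-1)}u^{(k-1)}w^{(k)} \in \GGG$, so the only genuinely new subwords introduced by the concatenation are ones that the inductive hypothesis and this certificate jointly rule out. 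Making this overlap-gluing rigorous — identifying exactly how large $n$ must be, or whether the argument works uniformly in $n$ — is where the real work lies.
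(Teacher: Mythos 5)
There is a genuine gap, and you have located it yourself: the overlap-gluing step. Your induction carries forward only the information that $W = w^{(1)}u^{(1)}\cdots u^{(k-2)}w^{(k-1)}$ lies in $\LLL$, and then asks to conclude $Wu^{(k-1)}w^{(k)} \in \LLL$ from $W \in \LLL$ and $w^{(k-1)}u^{(k-1)}w^{(k)} \in \GGG$ by overlapping along $w^{(k-1)}$. This principle (``$xy \in \LLL$ and $yz \in \LLL$ with $|y|$ large implies $xyz \in \LLL$'') is false for general shift spaces, and the lemma is stated and used for an \emph{arbitrary} shift space whose language contains a collection $\GGG$ satisfying \ref{spec-same}; no power-free structure is available. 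Even if you restrict to $\Xdb$, the heuristic you offer does not hold up: a forbidden power $t^\alpha$, $\alpha \geq \beta$, straddling the entire overlap block can have period $|t|$ comparable to the length of the whole concatenation, in which case its restrictions to $W$ and to $w^{(k-1)}u^{(k-1)}w^{(k)}$ are powers of exponent far below $\beta$ and violate nothing in either known-legal word. So there is no bounded (or even $n$-dependent) window in which the violation must be visible, and the ``real work'' you defer cannot be completed along these lines.

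The missing idea is to strengthen the inductive statement rather than weaken it: keep the \emph{entire} concatenation in $\GGG$, not merely in $\LLL$, so that \ref{spec-same} can be applied to it again. The obstruction you correctly noted --- that \ref{spec-same} requires the two glued words to have \emph{equal} length --- is resolved not by gluing one word at a time (after which the lengths $((k-1)n + (k-2)\tau$ versus $n$) can never match), but by a dyadic doubling induction, which is what the paper does: prove by induction on $\ell$ that any $2^\ell$ words of $\GGG_n$ can be glued into a single word of $\GGG$. At the inductive step, the first $2^\ell$ words glue into some $v^{(1)} \in \GGG$ and the last $2^\ell$ into some $v^{(2)} \in \GGG$, and crucially $|v^{(1)}| = |v^{(2)}| = 2^\ell n + (2^\ell - 1)\tau$, so \ref{spec-same} applies directly to the pair $(v^{(1)}, v^{(2)})$ and yields $v^{(1)}u v^{(2)} \in \GGG$. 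No overlap-gluing is ever needed, because at every stage the full word being extended is certified to lie in $\GGG$. General $k$ then follows by padding the list with repeated copies of $w^{(k)}$ up to the next power of $2$, gluing, and truncating, using that $\LLL$ is factorial. Your proposal has the right raw ingredients (induction plus repeated use of \ref{spec-same}) but, as written, the inductive step fails at exactly the point you flagged.
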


Our general uniqueness result will also use
the following consequence of \ref{h-gap} instead of \ref{h-gap} itself.
\begin{enumerate}[leftmargin=*, label=\upshape{(\Roman{*}$'$)}]\setcounter{enumi}{1}
\item\label{PR}
The quantity $Q := (\sum_{i=0}^\infty \#\Cp_i e^{-ih})(\sum_{k=0}^\infty \#\Cs_k e^{-kh})$ is finite.
\end{enumerate}
The following theorem is proved in \S\ref{sec:pf-gen}. Together with Theorem \ref{thm:get-spec} and Lemma \ref{lem:general-spec}, it completes the proof of Theorem \ref{thm:main}.

\begin{theorem}\label{thm:general}
Let $X$ be a shift space on a finite alphabet with language $\LLL$. Suppose that there exist $\CCC^p,\GGG,\CCC^s \subset \LLL$ satisfying \ref{decomp}, \ref{PR}, \ref{spec-diff}, and \ref{spec-same-2}. Then $X$ has a unique measure of maximal entropy. Moreover, we have $e^{nh} \leq \#\LLL_n \leq Q e^{(n+\tau)h}$ for all $n$, where $Q$ and $\tau$ are as in \ref{PR} and \ref{spec-same-2}.
\end{theorem}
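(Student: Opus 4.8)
The plan is to prove Theorem~\ref{thm:general} by combining the decomposition and specification hypotheses to show that $\GGG$ carries full entropy, and then to build a unique MME supported on measures that are, in an appropriate sense, ``concentrated on $\GGG$.'' I would organize the argument around the Gurevich-type counting estimates that the hypotheses provide.

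\emph{Step 1: Counting bounds and full entropy of $\GGG$.} First I would establish the two-sided bound $e^{nh}\le \#\LLL_n \le Q e^{(n+\tau)h}$. The lower bound $e^{nh}\le\#\LLL_n$ follows from submultiplicativity of $\#\LLL_n$ (Fekete) since the limit defining $h$ is an infimum of $\frac1n\log\#\LLL_n$ adjusted by the superadditive/subadditive direction. For the upper bound, I would use \ref{decomp} to write each $w\in\LLL_n$ as $u^p v u^s$ with $v\in\GGG$, $u^p\in\Cp$, $u^s\in\Cs$; summing over the three lengths gives
\begin{equation*}
\#\LLL_n \le \sum_{i+m+k=n} \#\Cp_i \,\#\GGG_m \,\#\Cs_k,
\end{equation*}
and after bounding $\#\GGG_m\le\#\LLL_m$ and factoring out $e^{mh}$, the hypothesis \ref{PR} that $Q<\infty$ controls the prefix/suffix sums, yielding $\#\LLL_n\le Q\,\#\GGG_{\cdot}$-type bounds. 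In the same step I would extract the crucial consequence that $\GGG$ carries full entropy, $\varlimsup\frac1n\log\#\GGG_n = h$: indeed if $\GGG$ had entropy $<h$, then since $\Cp,\Cs$ have summable weights (entropy strictly less than $h$ by \ref{PR}), the decomposition would force $h(\LLL)<h$, a contradiction. This is the place where \ref{PR} does real work.

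\emph{Step 2: Construct a candidate MME via specification.} Using \ref{spec-same-2}, for each fixed $n$ the words in $\GGG_n$ can be freely concatenated with gluing words of length $\tau$ while staying in $\LLL$. I would define, for each $n$, a measure $\mu_n$ by distributing mass uniformly over length-$N$ words obtained by concatenating blocks from $\GGG_n$ (periodized via repeated gluing), then push these to invariant measures by averaging over shifts, and take a weak* limit $\mu$. The free concatenation guaranteed by \ref{spec-same-2} ensures that the number of distinct such concatenations of $k$ blocks is at least $(\#\GGG_n)^k$, so a counting/entropy estimate shows $h_\mu\ge \varlimsup\frac1n\log\#\GGG_n = h$; by the variational principle $\mu$ is an MME. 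Here \ref{spec-diff} is the analogue used to glue \emph{different-length} good words, which I expect is needed to handle the general decomposition rather than just same-length blocks.

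\emph{Step 3: Uniqueness.} This is the main obstacle. I would prove that any ergodic MME $\nu$ must give zero weight to the ``bad'' part and is determined by its values on cylinders $[v]$ for $v\in\GGG$. The standard route, following the Climenhaga--Thompson program (of which this paper is plainly an instance), is to show that the hypotheses yield a Gibbs-type upper bound $\nu([w])\le C e^{-|w|h}$ uniformly, via the counting bound from Step~1 combined with the specification gluing to compare $\nu([w])$ across translates; the entropy gap encoded in \ref{PR} is exactly what prevents the prefix/suffix collections from supporting a competing source of entropy, so any MME must ``live on $\GGG$.'' Two measures both satisfying the same Gibbs bounds and both maximizing entropy must then coincide, by an argument showing the MME is the unique measure whose cylinder measures match the counting-derived weights. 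The delicate points are (i) transferring the same-length gluing of \ref{spec-same-2} and the variable-length gluing of \ref{spec-diff} into genuine multiplicative comparisons $\nu([uwv])\gtrsim \nu([u])\nu([v])$ despite the absence of periodic points, and (ii) ruling out mass escaping into long prefix/suffix blocks, which is where finiteness of $Q$ must be leveraged as a quantitative tail bound rather than merely an entropy inequality. I would expect the bulk of \S\ref{sec:pf-gen} to be devoted to making these comparisons precise.
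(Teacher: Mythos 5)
Your proposal has the right general shape (counting bounds, a measure built from $\GGG$, a Katok-type estimate), but two load-bearing steps are unjustified. First, Step 1's upper bound is circular: after writing $\#\LLL_n \le \sum_{i+m+k=n}\#\Cp_i\,\#\GGG_m\,\#\Cs_k$, you bound $\#\GGG_m\le\#\LLL_m$, but at that point no bound of the form $\#\LLL_m\le Ce^{mh}$ is available (Fekete/submultiplicativity gives only the \emph{lower} bound $\#\LLL_m \ge e^{mh}$), so the sum cannot be compared to $Qe^{(n+\tau)h}$. The missing ingredient is that \ref{spec-same-2} itself forces $\#\GGG_n\le e^{(n+\tau)h}$: the gluing map $(\GGG_n)^k\to\LLL_{kn+(k-1)\tau}$ is injective, so $(\#\GGG_n)^k\le\#\LLL_{k(n+\tau)}$, and letting $k\to\infty$ gives the bound (Lemma \ref{lem:G-leq}); only then do \ref{decomp} and $Q<\infty$ transfer this to $\#\LLL_n\le Qe^{(n+\tau)h}$ (Lemma \ref{lem:L-leq}). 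The logical order is: specification bounds $\#\GGG_n$ from above, and the decomposition propagates that to $\LLL_n$ --- not the reverse.

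Second, and more seriously, your uniqueness step rests on a uniform Gibbs \emph{upper} bound $\nu[w]\le Ce^{-|w|h}$ for an \emph{arbitrary} MME $\nu$, followed by "two measures with matching Gibbs bounds coincide." The hypotheses do not yield such an upper bound for arbitrary MMEs (Shannon--McMillan--Breiman gives only $e^{-nh+o(n)}$ along $\nu$-typical cylinders, not a uniform constant over all cylinders), the paper never proves one, and attempting this is exactly the trap that historically forced the extra hypotheses in \cite{CT12}. The actual argument runs differently: (a) the constructed $\mu$ satisfies a \emph{lower} Gibbs bound on $\GGG$ together with the two-step version \eqref{eqn:Gibbs-2}, whose proof is where the four-word gluing \ref{spec-diff} genuinely enters (you misassign its role to "handling the general decomposition"); (b) the Pacifico--Yang--Yang lemma (Lemma \ref{lem:approx}) shows that for mutually singular invariant measures one can choose collections $\UUU_n$ carrying $\nu$-mass whose \emph{truncations} $\ph_{i,j}\UUU_n$ carry little $\mu$-mass, which combined with Proposition \ref{prop:good-lower} rules out any MME $\nu\perp\mu$; and (c) $\mu$ is ergodic (Proposition \ref{prop:ergodic}), proved from \eqref{eqn:Gibbs-2}, so that the Lebesgue decomposition of any MME forces $\nu\ll\mu$ and hence $\nu=\mu$. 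Your sketch never addresses ergodicity of the constructed measure at all; without it, even a correct Gibbs comparison could not close the argument, since $\mu$ itself might a priori be a nontrivial mixture. These are genuine gaps, not expository ones.
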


\begin{remark}[Other non-uniform specification results]
Conditions analogous to those appearing in Theorem \ref{thm:general} were introduced by the author and Dan Thompson in \cite{CT12}. In that paper, \ref{spec-diff} and \ref{spec-same-2} were replaced by the stronger (but simpler to formulate) condition that ``the specification property is satisfied for words in $\GGG$'', in the sense that
\begin{multline}\label{eqn:spec}
\text{for every $k\in \NN$ and $w^{(1)},\dots, w^{(k)} \in \GGG$,} \\
\text{there exist $u^{(1)},\dots, u^{(k-1)} \in \LLL_\tau$ such that $w^{(1)} u^{(1)} w^{(2)} \cdots u^{(k-1)} w^{(k)} \in \LLL$.}
\end{multline}
This condition, together with \ref{decomp} and \ref{PR}, implies uniqueness of the MME. This was proved in \cite{CT12,CT13} modulo an extra assumption on ``extending to words in $\GGG$''. This extra condition was ultimately proved to be unnecessary by Pacifico, Fan Yang, and Jiagang Yang \cite{PYY22}. Theorem \ref{thm:general} is another generalization of the result from \cite{CT12}: observe that \ref{spec-diff} and \ref{spec-same-2} are weakened versions of \eqref{eqn:spec}, obtained by restricting to the case $k=4$ in \ref{spec-diff}, and to words of the same length in \ref{spec-same-2}.

Echoing the comments preceding Theorem \ref{thm:get-spec}, 
we emphasize that the absence of periodic points in the power-free shifts $\Xdb$ stands in stark contrast to prior applications of specification-type properties, such as \cite{CT12,Cli18,BCFT18}, where periodic orbits are abundant and grow as quickly as the topological entropy.
\end{remark}

\subsection{Growth in power-free languages}\label{sec:lit}

Power-free languages have been well-studied. For a full introduction to these languages, and other topics in combinatorics on words, we refer to \cite{mL83,AS03}.
We will focus on the growth properties of power-free languages; overviews of the literature on this topic can be found in the surveys by Berstel \cite{jB05} and Shur \cite{aS11,aS12}. 

\subsubsection{Repetition threshold and Dejean's conjecture}

A natural first question is to ask when $X_\beta^d$ is non-empty. One quickly sees that it is empty when $d=2$ and $\beta\leq 2$: the only nonempty words over $A = \{\mathsf{0,1}\}$ that do not contain a square are
\[
\mathsf{0},\quad
\mathsf{01},\quad
\mathsf{010},\quad\text{and}\quad
\mathsf{1},\quad\mathsf{10},\quad\mathsf{101}.
\]
On the other hand, the Thue--Morse sequence\footnote{This sequence is  $\mathsf{0 1 10 1001 10010110\dots}$, where the
first $2^n$ symbols determine the next $2^n$ symbols via the morphism $\mathsf{0\leftrightarrow 1}$.}
does not contain any subword that is an $\alpha$-power for any $\alpha > 2$ (it is \emph{overlap-free}), so $X_{2^+}^2 \neq \emptyset$. Thus $\beta = 2$ is the \emph{repetition threshold} for the alphabet size $d=2$: writing
\[
\mathsf{RT}(d) := \inf \{ \beta > 1 : X_\beta^d \neq \emptyset \},
\]
we have $\mathsf{RT}(2) = 2$.
In 1972, Dejean proved that $\mathsf{RT}(3) = \frac 74$, and conjectured that
\[
\mathsf{RT}(4) = \frac 75,
\qquad
\mathsf{RT}(d) = \frac{d}{d-1}
\text{ for all } d\geq 5.
\]
This was eventually proved for all $d$ through a combination of work by Pansiot \cite{jP84}, Moulin-Ollagnier \cite{MO92}, Mohammad-Noori and Currie \cite{MC07}, Carpi \cite{aC07}, Currie and Rampersad \cite{CR11}, and Rao \cite{mR11}.

\subsubsection{Topological entropy and the exponential conjecture}

With the repetition thresholds known, it is natural to ask for which values of $(\beta,d)$ with $\beta > \mathsf{RT}(d)$ the topological entropy of $X_\beta^d$ is positive.\footnote{The literature in combinatorics on words tends to use the term \emph{exponential complexity} rather than \emph{positive topological entropy}, but the meaning is the same.}
 That is, we want to determine the set
\[
\mathcal{P} := \{ (\beta,d) \in (1,\infty) \times \{2,3,\dots\} : h(X_\beta^d) > 0 \}.
\]
Early results in this direction were given by Brandenburg, who proved that $X_3^2$ and $X_2^3$ have positive topological entropy \cite{fB83}. 

In the other direction,
Restivo and Salemi \cite{RS85} proved that $h(X_{2^+}^2) = 0$ by characterizing elements of this shift space via the Thue--Morse morphism; this was extended to $X_\beta^2$ for $2 < \beta \leq \frac 73$ by Karhum\"aki and Shallit \cite{KS04}.\footnote{Both \cite{RS85} and \cite{KS04} actually obtained more explicit polynomial bounds on 
$\#\LLL_n(X_\beta^2)$, but the subexponential nature of the growth is the main phenomenon we are interested in here.} 

This turns out to be sharp: Karhum\"aki and Shallit also proved that $h(X_\beta^2) > 0$ for all $\beta > \frac 73$. 
When $d\geq 3$, no analogue of the ``polynomial plateau'' $(2,\frac 73] \times \{2\}$ has been observed. Ochem proved that for $d=3,4$, every nonempty $X_\beta^d$ has positive entropy, and conjectured that this is true for all $d\geq 3$ \cite[Conjecture 4.4]{pO06}. An equivalent formulation of this \emph{exponential conjecture} is that, writing $\beta = \mathsf{RT}(d)$, the shift space $\Xdbp$ has positive topological entropy.
This has now been proved for all $d\geq 3$ except for the even numbers from $12$ to $26$ \cite{KR11,TS12,CMR20}.

\begin{figure}[htbp]
\begin{tikzpicture}[yscale=0.6]
\fill[orange!50!white] (2,0)--(7/3,0)--(7/3,1)--(2,1)--cycle;
\fill[blue!25!white] (7/3,0)--(7/3,1)--(7/4,1)--(7/4,2)%
--(7/5,2)--(7/5,3)--(5/4,3)--(5/4,4)%
--(8,4)--(8,0)--cycle;
\fill[blue!25!white, path fading=north]
(8,4)--(6/5,4)--(6/5,5)--(8,5)--cycle;
\node at (0,.5)[left] {$2$};
\node at (0,1.5)[left] {$3$};
\node at (0,2.5)[left] {$4$};
\node at (0,3.5)[left] {$5$};
\draw[->] (0,0)--(11.5,0) node[right]{$\beta$};
\draw[->] (0,0)--(0,5) node[left]{$d$};
\tikzfading[name=fade out, 
    inner color=transparent!0,
    outer color=transparent!100] 
\fill[green!25!white,path fading=fade out] (10,4) circle[radius=1];
\fill[white] (9,3) rectangle (10,5);
\fill[white] (10,3) rectangle (11,4);
\fill[green!25!white, path fading=north]
(8,4) rectangle (10,5);
\fill[green!25!white, path fading=east] 
(10,0) rectangle (11,4);
\fill[green!25!white]
(8,0) rectangle (10,4);
\draw[dotted, ultra thick] (7/3,0)--(7/3,1)--(7/4,1)--(7/4,2)%
--(7/5,2)--(7/5,3)--(5/4,3)--(5/4,4)--(6/5,4)--(6/5,5);
\draw[ultra thick] (8,1)--(8,5);
\draw[ultra thick, dotted] (8,0)--(8,1);
\draw[gray] (0,1)--(11.5,1);
\draw[gray] (0,2)--(11.5,2);
\draw[gray] (0,3)--(11.5,3);
\draw[gray] (0,4)--(11.5,4);
\draw[black!80!white] (7/4,1)--(7/4,-.3) node[below left]{$\mathsf{RT}(d)$};
\draw[black!80!white] (7/5,2)--(7/5,-.3);
\draw[black!80!white] (5/4,3)--(5/4,-.3);
\draw[black!80!white] (6/5,4)--(6/5,-.3);
\draw (2,1)--(2,-.3) node[below]{$2$};
\draw (7/3,0)--(7/3,-.3) node[below right]{$\frac 73$};
\draw (8,0)--(8,-.3) node[below]{$12$};
\draw[red,->] (0,6) node[left]{$\emptyset$} -- (0.6,5)--(0.6,2.5);
\draw[red,->] (4,5.5) node[above]{$h>0$, uniqueness of MME unknown} -- (4,2.5);
\draw[red,->] (10,5.5) node[above]{$h>0$, unique MME} -- (9,2.5);
\draw[red,->] (5,-.5) node[below]{polynomial plateau}
-- (3,.5) -- (2.2,.5);
\end{tikzpicture}
\caption{A map of parameter space for $\Xdb$.}
\label{fig:beta-d}
\end{figure}

\subsubsection{Uniqueness of the MME}

The discussion so far is summarized in Figure \ref{fig:beta-d}, which illustrates how Theorem \ref{thm:main} can be viewed as a step towards answering the following question.

\begin{question}
Does every power-free shift space $X_\beta^d$ with positive topological entropy have a unique MME?
\end{question}

As mentioned at the end of \S\ref{sec:rep-free}, the arguments given in \S\ref{sec:power-free} have been written down with brevity and readability in mind, rather than with the goal of finding the largest set of $(\beta,d)$ to which these specification-type techniques can be applied. For $d\geq 3$ and $\beta\geq 16$, taking $\mathcal{A} = \{v^4 : v\in \LLL\}$ will let us verify \ref{spec-diff} and \ref{spec-same} with transition times $T=0$ and $\tau=1$, while going all the way to $d=2$ and $\beta = 12^+$ requires $T=\tau=2$. It seems likely that the same choice of $\AAA$ could be made to work for at least some smaller values of $d$ and $\beta$ at the cost of increasing the transition times and the lengths of the proofs, and one might also try to go further by replacing $v^4$ with a smaller power in the definition of $\mathcal{A}$. Similarly, one could improve the upper bounds in \eqref{eqn:card} by making more careful estimates of $\#(\AAA^*)_n$ and $\#\GGG_n$.

\subsection{Related questions}\label{sec:open}

The surveys \cite{jB05,aS11,aS12} describe related results and questions concerning growth in power-free languages. These include techniques for numerically approximating $h(X_\beta^d)$; see \cite[\S\S3--5]{aS12} for a discussion of these, and of the continuity properties of $\beta \mapsto h(X_\beta^d)$. It would be interesting to study the continuity properties not just of the topological entropy, but also of the dependence of the MME on the parameter $\beta$.

Another direction that has been studied is to consider \emph{repetition-free} languages and shift spaces defined by variants of the power-free condition. For example, one could allow powers of short words but not of longer words \cite{EJS74,fD76,FS95,RSW05}, or could impose a restriction that we avoid patterns other than pure powers \cite{fD79,CKX12}. It seems likely that the methods developed in this paper could be useful in at least some such examples.

We conclude this section by mentioning some further questions drawn from the theory of uniformly hyperbolic dynamical systems, where thermodynamic formalism was developed and where the specification-type properties used in Theorem \ref{thm:general} assume their strongest form. In this setting, uniqueness of the MME is only the beginning of the story. For example, given a shift space $X$ with the specification property \eqref{eqn:Bow-spec}, the following are true.
\begin{itemize}
\item The unique MME $\mu$ is fully supported: it gives positive weight to every open set \cite{Bow75}. Moreover, the system $(X,\sigma,\mu)$ is measure-theoretically isomorphic to a Bernoulli shift \cite{rB-Bern,yD13,Cli18}: in particular, it has the K property\footnote{The K (Kolmogorov) property
follows from Bernoullicity and implies mixing; it is equivalent to \emph{completely positive entropy}, meaning that $(X,\sigma,\mu)$ has positive entropy over any partition.} 
\cite{fL77} and is mixing.
\item For every H\"older continuous function $\ph \colon X\to \RR$, there is a unique \emph{equilibrium state} $\mu$ that maximizes the quantity $h_\mu + \int \ph \,d\mu$. This equilibrium state is fully supported and Bernoulli.
\item The space of invariant measures $\MMM_\sigma(X)$ is very rich: its set of extreme points (ergodic measures) is entropy-dense  \cite{EKW94,PS05}, which implies that $\MMM_\sigma(X)$ is linearly isomorphic to the Poulsen simplex; moreover, it contains measures isomorphic to every ergodic aperiodic transformation with entropy less than $h$ \cite{wK75,QS16}.
\end{itemize}

\begin{question}\label{Q:MME}
Fix $d,\beta$ as in Theorem \ref{thm:get-spec}, and let $\mu$ be the unique MME on $X_\beta^d$.
\begin{enumerate}[leftmargin=*,label=\upshape{(\alph{*})}]
\item Is $\mu$ fully supported?\label{Q:supp}
\item Is $\mu$ mixing, K, and Bernoulli?
\end{enumerate}
\end{question}

The question of full support is closely related to what Shallit and Shur call the \emph{Restivo--Salemi} property \cite[\S4]{SS19}, after a question posed in
\cite[Problem 4]{RS85}: this is the condition that given any $u,v\in \LLL(X_\beta^d)$, there exists a word $w$ such that $uwv \in \LLL(X_\beta^d)$. In the terminology of dynamical systems, this is the property that $(X_\beta^d,\sigma)$ is topologically transitive, which must be true whenever there exists a fully supported ergodic measure; in particular, a positive answer to Question \ref{Q:MME}\ref{Q:supp} would imply the Restivo--Salemi property. Numerical evidence suggests that the Restivo--Salemi property should hold for every $X_\beta^d$; see \cite[Conjecture 1]{aS09} and \cite[Conjecture 42]{SS19}. However, the author is not aware of a proof of this property for any $X_\beta^d$ with positive entropy.

In hyperbolic dynamics, the mixing properties of the MME are often related to the question of local product structure, which in turn (depending on how exactly it is defined) tends to lead to periodic orbits. Since these are absent in $X_\beta^d$, it is not clear what mixing properties to expect here. 

Moving beyond the MME, the results mentioned above for uniformly hyperbolic systems suggest the following.

\begin{question}\leavevmode
\begin{enumerate}[leftmargin=*, label=\upshape{(\alph{*})}]
\item
Under what conditions does a H\"older continuous potential function $\ph \colon X_\beta^d \to \RR$ has a unique equilibrium state? 
\item
Is the set of ergodic measures entropy-dense in $\MMM_\sigma(X_\beta^d)$? That is, given any invariant $\mu$, is there a sequence of ergodic measures $\mu_n$ such that $\mu_n\to \mu$ in the weak* topology and $h_{\mu_n} \to h_\mu$? If not, is there at least a weak*-density result, so that $\MMM_\sigma(X_\beta^d)$ is linearly isomorphic to the Poulsen simplex?
\item 
Does the space of invariant measures $\MMM_\sigma(X_\beta^d)$ contain measures isomorphic to every ergodic aperiodic transformation with entropy less than $h$?
If not, does it at least contain measures achieving every entropy between $0$ and $h$?
\end{enumerate}
\end{question}

\section{Specification-type properties for power-free shifts}\label{sec:power-free}

This section is devoted to the proof of Theorem \ref{thm:get-spec}. Fixing $d$ and $\beta$, we write $A = \{1,\dots, d\}$, and consider the following collections of words as in \eqref{eqn:CGC}.
\begin{itemize}
\item Let $\GGG$ be the set of all words $w\in \LLL = \LLL(\Xdb)$ such that $w$ does not begin or end with a word of the form $v^4$ for any $v\in \LLL$. 
\item Let $\AAA = \{ v^4 : v\in \LLL \}$, and let $\Cp = \Cs = \AAA^* \cap \LLL$. 
\end{itemize}
We quickly see that $\Cp,\GGG,\Cs$ satisfy the decomposition property in \ref{decomp}: given any $w\in \LLL$, let $u^p$ be the longest prefix of $w$ that can be written as a concatenation of words in $\AAA$, and $u^s$ the longest suffix of $w$ that can be written in this way and that has length at most $|w| - |u^p|$. 
Then we have $w = u^p v u^s$, where $v$ does not begin or end with a word in $\AAA$ (by maximality of $u^p$ and $u^s$), and thus $v\in \GGG$. Note that $v$ could be the empty word.

With \ref{decomp} verified, it remains to prove that 
$h(\AAA^*) = \ulim \frac 1n \log \#(\AAA^*)_n < h(X)$ so that \ref{h-gap} is satisfied, and to verify that $\GGG$ satisfies the specification-type properties \ref{spec-diff} and \ref{spec-same}. After establishing some basic lemmas in \S\ref{sec:periods}, we will prove \ref{spec-diff} and \ref{spec-same} in \S\ref{sec:spec-type}. Then in \S\ref{sec:get-gap}, we will use \ref{spec-same} to 
obtain uniform bounds relating $\#\GGG_n$, $e^{nh}$, and $\#\LLL_n$, which we use to prove \ref{h-gap} and \eqref{eqn:card}.

\subsection{Periods of words}\label{sec:periods}

Given $w=w_1 \cdots w_n \in A^n$, an integer $\ell$ is \emph{a period} of $w$ if $1\leq \ell\leq n$ and $w_i = w_{\ell+i}$ for all $1\leq i\leq n-\ell$; equivalently, if $w = (w_1 \cdots w_\ell)^{n/\ell}$. The set of all periods of a word $w$ will be denoted $\Pi(w)$, and the smallest element of $\Pi(w)$ will be called \emph{the period} of $w$.

\begin{theorem}[Fine--Wilf {\cite[Theorem 1.5.6]{AS03}}]
If $w$ is a word of length $n$ and $\ell,k$ are periods of $w$ such that $\ell + k - \gcd(\ell,k) \leq n$, then $\gcd(\ell,k)$ is a period of $w$.
\end{theorem}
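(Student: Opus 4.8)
The plan is to recast periodicity as a connectivity statement and then reduce to the coprime case. I would build a graph $G$ on the position set $\{1,\dots,n\}$, joining $i$ to $i+\ell$ for each $1\le i\le n-\ell$ and joining $i$ to $i+k$ for each $1\le i\le n-k$. The two hypotheses $\ell,k\in\Pi(w)$ say precisely that $w_i=w_j$ whenever $i,j$ are joined by an edge, so $w$ is constant on each connected component of $G$. Since $d:=\gcd(\ell,k)$ divides both $\ell$ and $k$, every edge joins positions lying in the same residue class modulo $d$. Hence, to prove that $d\in\Pi(w)$, i.e.\ that $w_i=w_{i+d}$ for all $1\le i\le n-d$, it suffices to show that each residue class modulo $d$ is contained in a single connected component of $G$.

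The second step reduces this to the coprime case. Fixing a residue class modulo $d$, I would list its positions in increasing order and relabel them $0,1,\dots,m$; writing $\ell=d\ell_0$ and $k=dk_0$ with $\gcd(\ell_0,k_0)=1$, an $\ell$-edge becomes $j\sim j+\ell_0$ and a $k$-edge becomes $j\sim j+k_0$. The hypothesis $\ell+k-d\le n$ gives $n/d\ge \ell_0+k_0-1$, and a short count of how many positions fall in each class (the smallest class, that of the top residue, contains $\lfloor n/d\rfloor$ of them) shows $m+1\ge \ell_0+k_0-1$. So the theorem follows from the coprime connectivity lemma: for coprime $p,q$, the graph on $\{0,1,\dots,m\}$ with edges $j\sim j+p$ and $j\sim j+q$ (taken only within range) is connected whenever $m+1\ge p+q-1$.

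I would prove this lemma by a bounded deterministic walk. Assuming $p<q$, define $x_0=0$ and $x_{t+1}=x_t+p$ when $x_t+p\le m$, and $x_{t+1}=x_t-q$ otherwise. Each move is a valid edge, and the walk stays in $[0,m]$: if $x_t+p>m$ then, since $m\ge p+q-1$, we get $x_t>q-1$, so $x_t\ge q$ and $x_t-q\ge 0$. The walk cannot perform only $-q$ moves forever, so it makes $+p$ moves infinitely often; as each $+p$ move advances the residue modulo $q$ by $p$ (while $-q$ moves fix it) and $\gcd(p,q)=1$, the walk meets every residue class modulo $q$. Finally, within each residue class modulo $q$ the vertices form an arithmetic progression of step $q$, joined consecutively by $q$-edges, so each such class is internally connected; together with the walk this links all of $\{0,\dots,m\}$ to $0$.

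The main obstacle is exactly the range control in the last paragraph: keeping the walk inside $\{0,\dots,m\}$ is where the bound $m+1\ge p+q-1$ — equivalently the slack $-\gcd(\ell,k)$ in the hypothesis — is used, and it is sharp, since a segment of length $p+q-2$ with coprime periods $p,q$ need not be constant. (An alternative route is the classical Euclidean induction that replaces the pair $(\ell,k)$ by $(\ell-k,k)$, which preserves both $\gcd$ and, one checks, the length hypothesis; but there the same boundary positions near index $n-\ell$ are the delicate point, so I prefer the connectivity formulation, which isolates the difficulty in the single coprime lemma above.)
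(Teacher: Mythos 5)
Your strategy (periods as graph connectivity, reduction to the coprime case) is sound, and the reduction itself is correct: edges preserve residues modulo $d=\gcd(\ell,k)$, a class relabels to $\{0,\dots,m\}$ with steps $p=\ell/d$ and $q=k/d$, and the hypothesis $\ell+k-d\le n$ gives $p+q-1\le n/d$, hence by integrality $p+q-1\le\lfloor n/d\rfloor\le m+1$. But there is a genuine gap in the coprime lemma, and it sits exactly at the sharp case. Your range-control step says ``since $m\ge p+q-1$,'' whereas the lemma's hypothesis --- and all your reduction supplies --- is $m+1\ge p+q-1$, i.e.\ $m\ge p+q-2$. In the extremal case $m=p+q-2$ the walk really does escape the range: take $p=2$, $q=3$, $m=3$ (coming, for instance, from $n=4$, $\ell=2$, $k=3$, $d=1$, where the theorem's hypothesis holds with equality); the walk goes $0\to 2$, and since $2+p=4>m$ the next step is $2-q=-1$. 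So as written, the argument proves only a weaker theorem in which each residue class is required to have one extra position --- in particular it cannot handle equality in $\ell+k-\gcd(\ell,k)\le n$, which the statement allows and which, as your own sharpness remark shows, is precisely the case that makes Fine--Wilf optimal.

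The lemma as you state it is true, and the fix stays inside your framework; what must be replaced is the walk, not the reduction. Keep your observation that each residue class modulo $q$ inside $\{0,\dots,m\}$ is an arithmetic progression of step $q$, internally connected by $q$-edges, and contract each such class to a single vertex; since $m\ge p+q-2\ge q-1$, every residue modulo $q$ occurs, so the contracted vertex set is the set of residues modulo $q$. The $p$-edges emanate from the positions $j\in\{0,\dots,m-p\}$, a block of $m-p+1\ge q-1$ consecutive integers, whose residues modulo $q$ are at least $q-1$ distinct classes; in the contracted graph each such edge joins the class of $j$ to the class of $j+p$, and these are edges of the single $q$-cycle traced out by repeated translation by $p$ on the residues (a single cycle precisely because $\gcd(p,q)=1$). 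A cycle with at most one edge deleted is still connected, so the contracted graph is connected, hence so is the original graph, including when $m=p+q-2$. For context: the paper itself gives no proof of this theorem --- it quotes it from Allouche and Shallit --- so there is no argument of the paper to compare against; the burden of correctness falls entirely on your proof, and with the contraction argument in place of the walk it would be complete.
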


\begin{corollary}\label{cor:FW}
If $w\in A^n$ has a period in $[1,\lceil n/2 \rceil]$, then every such period is a multiple of the least such period.
\end{corollary}

Given $w\in A^*$ and $\ell \in \NN$, let $\pre_\ell(w)$ be the number of times the word $w_{[1,\ell]}$ is repeated as a prefix of $w$. That is, $\pre_\ell(w) := \frac k\ell$, where $k$ is maximal with the property that $w_{[1,k]}$ is a prefix of $(w_{[1,\ell]})^\infty$. Similarly, let $\suf_\ell(w)$ be the number of times the length-$\ell$ suffix of $w$ is repeated. 

\begin{remark}\label{rmk:G4}
In the context of $\Xdb$, we see that given $w\in \LLL$, we have $w\in \GGG$ if and only if $\pre_\ell(w) < 4$ and $\suf_\ell(w) < 4$ for all $1\leq \ell \leq |w|$.
\end{remark}

\begin{lemma}\label{lem:block-left}
For all $n\in \NN$ and $v\in A^n$, there exists $a\in A$ such that given any $u\in A^+$ with $u_1 \neq a$, we have $\pre_\ell(vu) = \pre_\ell(v)$ for all $\ell \in \{1,\dots, \lceil n/2 \rceil \}$.
\end{lemma}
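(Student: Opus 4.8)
The plan is to isolate exactly which lengths $\ell \in \{1,\dots,\lceil n/2\rceil\}$ are ``dangerous'', in the sense that appending $u$ to $v$ can change $\pre_\ell$. The point is that $\pre_\ell(w)\cdot \ell$ is the length of the longest prefix of $w$ agreeing with the periodic word $(w_{[1,\ell]})^\infty$, and since $\ell \le \lceil n/2\rceil \le n$ we have $(vu)_{[1,\ell]} = v_{[1,\ell]}$, so both $\pre_\ell(v)$ and $\pre_\ell(vu)$ are measured against the \emph{same} periodic template. First I would observe that if the agreement between $v$ and $(v_{[1,\ell]})^\infty$ already breaks at some position $\le n$ — equivalently, if $\ell \notin \Pi(v)$ — then that same mismatch occurs in $vu$ at a position $\le n$, so $\pre_\ell(vu) = \pre_\ell(v)$ regardless of $u$. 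Hence $\pre_\ell$ can only increase when $\ell$ is genuinely a period of $v$, i.e.\ $\ell \in \Pi(v)$ and $\pre_\ell(v) = n/\ell$.

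For such a period $\ell$, the agreement with the template runs all the way through position $n$, and whether it continues into $u$ is governed by a single symbol: $\pre_\ell(vu) > \pre_\ell(v)$ precisely when $u_1$ equals the expected next symbol of $(v_{[1,\ell]})^\infty$ at position $n+1$. A short index computation (writing $n = q\ell + r$) identifies this symbol as $v_{n+1-\ell}$, using that $\ell$ is a period to shift the template index back into $v$. So to force $\pre_\ell(vu) = \pre_\ell(v)$ for a given period $\ell$, it suffices to demand $u_1 \neq v_{n+1-\ell}$, and the whole problem reduces to finding a single $a \in A$ that simultaneously avoids all of the symbols $\{ v_{n+1-\ell} : \ell \in \Pi(v),\ \ell \le \lceil n/2\rceil \}$.

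The crux — and the step I expect to be the main obstacle — is that this set of ``dangerous'' symbols is in fact a singleton, which is where Corollary \ref{cor:FW} enters. Let $p$ be the least period of $v$ in $[1,\lceil n/2\rceil]$ (if none exists, no $\ell$ is dangerous and any $a$ works). By the corollary every period $\ell$ in that range is a multiple $\ell = mp$. I would then show $v_{n+1-\ell} = v_{n+1-p}$ by applying the relation $v_j = v_{j+p}$ (valid since $p \in \Pi(v)$) repeatedly along the chain $n+1-mp,\ n+1-(m-1)p,\ \dots,\ n+1-p$; the range check $1 \le n+1-mp$ and $n+1-(m-1)p \le n$ follows from $mp \le \lceil n/2\rceil \le n$ and $m \ge 2$. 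Thus all dangerous symbols coincide with $v_{n+1-p}$, and setting $a := v_{n+1-p}$ completes the proof: any $u$ with $u_1 \neq a$ leaves $\pre_\ell(vu) = \pre_\ell(v)$ for every $\ell \in \{1,\dots,\lceil n/2\rceil\}$, whether or not $\ell$ is a period of $v$.
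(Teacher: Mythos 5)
Your proof is correct and follows essentially the same route as the paper's: both reduce to the observation that only the periods $\ell \in \Pi(v) \cap [1,\lceil n/2\rceil]$ are dangerous, and both invoke Corollary \ref{cor:FW} to conclude that there is a single forbidden symbol, namely the next letter in the periodic continuation of $v$ (your $a = v_{n+1-p}$ is exactly the paper's $a = x_{n+1}$ for $x = (v_{[1,k]})^\infty$, $k=p$). The only cosmetic difference is that you verify the coincidence of the dangerous symbols by chaining the relation $v_j = v_{j+p}$, whereas the paper notes that all periods in the range, being multiples of $p$, generate the same infinite periodic word $x$.
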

\begin{proof}
Observe that if $\ell \in \{1,\dots,\lceil n/2\rceil \}$ is not a period of $v$, then $\pre_\ell(vu) = \pre_\ell(v)$ for every $u\in A^+$. In particular, if $P := \Pi(v) \cap [1,\lceil n/2 \rceil]$ is empty, then any $a\in A$ fulfills the conclusion.

If $P\neq \emptyset$, then let $k = \min P$. Writing $x= (v_{[1,k]})^\infty = (v_1 \cdots v_k)^\infty$ for the corresponding periodic infinite string that has $x_{[1,n]} = v$, let $a = x_{n+1}$ be the next symbol following $v$ in this periodic repetition.
By Corollary \ref{cor:FW}, every period $\ell$ of $v$ in $[1,\lceil n/2 \rceil]$ is a multiple of $k$
and thus has $(v_{[1,\ell]})^\infty = x$.
It follows that $a$ has the desired property: given any $u\in A^+$ such that $u_1 \neq a = x_{n+1}$, we see that $\ell$ is not a period of $vu_1$, so $\pre_\ell(vu) = \pre_\ell(v)$.
\end{proof}

\begin{lemma}\label{lem:partial-spec}
Given $v,w\in A^*$, there exist $a_1,a_2 \in A$ such that 
for every $\tau \in \NN$ and every $u\in A^\tau$ satisfying $u_1 \neq a_1$ and $u_\tau \neq a_2$, we have
\begin{align*}
\pre_\ell(vuw) = \pre_\ell(v) &\text{ for every } \ell \in \{1,\dots, \lceil |v|/2\rceil \}, \\
\suf_\ell(vuw) = \suf_\ell(w) &\text{ for every } \ell \in \{1,\dots, \lceil |w|/2\rceil \}.
\end{align*}
\end{lemma}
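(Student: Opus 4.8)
The plan is to deduce both equalities from Lemma~\ref{lem:block-left}, using it directly for the statement about prefixes and applying it to reversed words for the statement about suffixes. For the prefix claim, let $a_1 \in A$ be the symbol provided by Lemma~\ref{lem:block-left} applied to $v$ (with $n = |v|$). The key point is that Lemma~\ref{lem:block-left} controls $\pre_\ell$ after appending \emph{any} word on the right, and the conclusion depends on the appended word only through its first symbol. Thus, given $\tau \in \NN$ and $u \in A^\tau$ with $u_1 \neq a_1$, I would apply the lemma with the single appended word $u' := uw \in A^+$ (note $u'_1 = u_1$ since $\tau \geq 1$, so $u$ is nonempty) to obtain $\pre_\ell(vuw) = \pre_\ell(vu') = \pre_\ell(v)$ for all $\ell \in \{1,\dots,\lceil |v|/2\rceil\}$, which is exactly the first equality.

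For the suffix claim I would pass to reversals. Writing $R(x_1\cdots x_n) = x_n \cdots x_1$ for the reversal operator, the elementary identity $\suf_\ell(z) = \pre_\ell(R(z))$ holds for every word $z$ and every $\ell$, since a repeated length-$\ell$ suffix of $z$ is precisely a repeated length-$\ell$ prefix of $R(z)$. Let $a_2 \in A$ be the symbol produced by Lemma~\ref{lem:block-left} applied to $R(w)$ (with $n = |w|$). Then $R(vuw) = R(w)\,R(u)\,R(v) = R(w)\,R(vu)$, and the first symbol of $R(vu)$ is the last symbol of $vu$, namely $u_\tau$ (again using $\tau \geq 1$). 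So for $u$ with $u_\tau \neq a_2$, Lemma~\ref{lem:block-left} applied to $R(w)$ with appended word $R(vu)$ gives $\pre_\ell(R(w)\,R(vu)) = \pre_\ell(R(w))$ for $\ell \le \lceil |w|/2\rceil$; translating back through the identity above yields $\suf_\ell(vuw) = \pre_\ell(R(vuw)) = \pre_\ell(R(w)) = \suf_\ell(w)$, the second equality.

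Finally I would combine the two choices. The constants $a_1, a_2$ are determined once and for all by $v$ and $w$, and the prefix and suffix conditions constrain opposite ends of $u$, so imposing $u_1 \neq a_1$ and $u_\tau \neq a_2$ simultaneously poses no conflict. I expect no serious obstacle here; the only points requiring care are the bookkeeping in the reversal step and the degenerate cases. If $v$ (resp.\ $w$) is empty, then $\lceil |v|/2\rceil = 0$ (resp.\ $\lceil |w|/2\rceil = 0$) and the corresponding range of $\ell$ is empty, so that equality is vacuous and any choice of $a_1$ (resp.\ $a_2$) works, side-stepping the hypothesis $n \in \NN$ of Lemma~\ref{lem:block-left}. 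Since $\tau \in \NN$ and (per the paper's convention, in which word lengths lie in $\NN \cup \{0\}$) $\NN$ excludes $0$, we always have $\tau \geq 1$, so $u$ is nonempty and $u_1, u_\tau$ are well-defined; note that no assumption that a valid $u$ exists is needed, as the statement is a conditional on $u$.
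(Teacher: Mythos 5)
Your proposal is correct and follows exactly the paper's proof: the paper likewise applies Lemma~\ref{lem:block-left} twice, once to $v$ to obtain $a_1$ and once to the reversed word $w_{|w|}\cdots w_2 w_1$ to obtain $a_2$, leaving the reversal bookkeeping implicit. You have simply written out the details (the identity $\suf_\ell(z)=\pre_\ell(R(z))$, the observation that only the first symbol of the appended word matters, and the degenerate cases) that the paper's one-line proof omits.
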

\begin{proof}
It suffices to apply Lemma \ref{lem:block-left} twice: once to $v$ to obtain $a_1$, and once to the reversed word $w_{|w|} \cdots w_2 w_1$ to obtain $a_2$.
\end{proof}

\subsection{Specification-type properties}\label{sec:spec-type}

The following lemmas verify conditions \ref{spec-diff} and \ref{spec-same} for all values of $\beta$ and $d$ given in Theorem \ref{thm:get-spec}; in fact, we will see that \ref{spec-same} holds for some smaller values of $\beta$ as well. The lemmas will also show how $T$ and $\tau$ can be chosen smaller when $\beta$ and $d$ are larger. 

\begin{lemma}\label{lem:GGGG}
Given any $d\geq 2$ and $\beta \geq 16$, the collection $\GGG$ satisfies \ref{spec-diff} with $T=0$:
for every $u,v,w,x\in \GGG$, we have $uvwx \in \LLL$.
\end{lemma}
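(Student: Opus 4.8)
The plan is to reduce the claim $uvwx\in\LLL$ to two separate tasks: (A) showing that the concatenation $uvwx$ is $\beta$-free, i.e.\ contains no subword of the form $z^\alpha$ with $\alpha\ge\beta$; and (B) showing that this $\beta$-free word is in fact \emph{extendable} to a bi-infinite point of $\Xdb$, so that it lies in the language rather than merely in $\tilde\LLL$. Task (A) is the combinatorial heart of the lemma, and is where the hypothesis $\beta\ge 16$ enters through the arithmetic $16=4\cdot 4$.

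For (A), I would argue by contradiction. Suppose some subword $p=z^\alpha$ of $uvwx$ has $\alpha\ge\beta\ge 16$, and write $\ell:=|z|$, so that $p$ has period $\ell$ and length $|p|=\alpha\ell\ge 16\ell$. Discarding any empty words, view $uvwx$ as a concatenation of at most four nonempty blocks; the occurrence of $p$ then meets at most four blocks and is cut by the block boundaries into at most four consecutive factors. Since these factors have total length $\ge 16\ell$, at least one of them, say $I$ lying inside the block $y\in\{u,v,w,x\}$, has length $\ge 16\ell/4=4\ell$. Being a factor of length $\ge\ell$ of the period-$\ell$ word $p$, the factor $I$ itself has period $\ell$, so any length-$4\ell$ sub-block of $I$ is a $4$-power. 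I would then distinguish the position of $I$ within $y$: if $p\subseteq y$, then $y$ already contains the forbidden power $p$, contradicting $y\in\LLL$; if $I=y$ (an interior block swallowed by $p$) or $I$ is a prefix of $y$ (when $y$ is the last block met), then the length-$4\ell$ prefix of $I$ is a $4$-power and a prefix of $y$; and if $I$ is a suffix of $y$ (when $y$ is the first block met), then the length-$4\ell$ suffix of $I$ is a $4$-power and a suffix of $y$. In every case $y$ begins or ends with a $4$-power, contradicting $y\in\GGG$. This establishes (A).

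For (B), the natural approach is to use $u,x\in\LLL$ to choose left- and right-infinite $\beta$-free tails $L$ and $R$ with $Lu$ and $xR$ valid, and then to show that $\cdots L\,uvwx\,R\cdots$ is $\beta$-free, which certifies $uvwx\in\LLL$. The same pigeonhole applies, but now the relevant blocks are $Lu,\,v,\,w,\,xR$, and a forbidden power can no longer be confined to a $\GGG$-word at the two outer seams: a long period-$\ell$ run can straddle the $L\mid u$ or $x\mid R$ boundary. The interior pieces (inside $v$ or $w$) and the portions inside $u,x$ are controlled exactly as in (A) using $u,x\in\GGG$, since a period-$\ell$ run of length $\ge 4\ell$ lying inside $u$ (resp.\ $x$) forces a $4$-power suffix (resp.\ prefix). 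The hard part will be the remaining case, in which $u$ (or $x$) is short, of length $<4\ell$, and the period-$\ell$ run continues from $u$ into the infinite tail $L$; here $\beta$-freeness of $Lu$ alone bounds such a run only by $\beta\ell$, which is not enough to close the count. I expect to resolve this by choosing the tails $L,R$ so that these outer-seam runs are broken as early as possible: applying Lemma \ref{lem:block-left} (to the reversed word $u$, and to $x$) to select the seam-adjacent symbols so that no suffix-period of $u$ of length up to $\lceil|u|/2\rceil$ is extended into $L$, and handling the finitely many longer periods $\ell\in(|u|/2,|u|]$ separately using the structure of periods furnished by the Fine--Wilf theorem. This outer-seam analysis is the main obstacle, and is precisely the point at which the period lemmas of \S\ref{sec:periods} become necessary.
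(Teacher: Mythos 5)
Your step (A) is correct and is exactly the paper's proof: a power $s=t^\alpha$ not contained in a single block meets each of $u,v,w,x$ in a prefix, a suffix, or the whole block, with period $|t|$, hence in fewer than $4|t|$ symbols (else that block would have a $4$-power prefix or suffix), so $|s|<16|t|$ and $\alpha<16\le\beta$; your pigeonhole argument is the contrapositive of this count. The paper's proof in fact stops there -- it never discusses extendability -- so your step (B) raises a point the paper leaves implicit, and in view of Remark \ref{rmk:extendable} it is a legitimate one.

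The gap is that you do not prove (B): you name the outer-seam case as ``the main obstacle'' and leave it open, and the repair you sketch would not succeed. First, you cannot prescribe the seam-adjacent symbols of the tails: $L$ must be a valid left-extension of $u$ inside $\Xdb$, and at this stage of the argument there is no tool for producing valid extensions with prescribed properties -- for all you know, every valid predecessor of $u$ is precisely the bad symbol supplied by Lemma \ref{lem:block-left}. Second, even granting such control, breaking periods $\ell\le\lceil|u|/2\rceil$ at the seam does not close the count: the fatal configurations are a period-$\ell$ word of length nearly $\beta\ell$ lying entirely inside $L$ and completed to a forbidden power by fewer than $\ell$ symbols of $u$, or a run with $\ell>\lceil|u|/2\rceil$ that swallows all of $u$ and whose portion in $L$ is bounded only by $\beta$-freeness of $L$, i.e.\ by roughly $\beta\ell$. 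Fine--Wilf says nothing about periods larger than $|u|/2$, and no finite set of prescribed symbols of $L$ can exclude these runs; what is needed is global control of the tail. The clean way to get it is to remove extendability from the definitions: let $\tilde{\GGG}$ be the set of $\beta$-free words with no $4$-power prefix or suffix, observe that the same-length gluing lemma (Lemma \ref{lem:GbG}, or Lemma \ref{lem:GuG} when $d=2$) is proved using only these combinatorial properties and hence holds for $\tilde{\GGG}$, and then self-glue: $ub_0u\in\tilde{\GGG}$, then $(ub_0u)b_1(ub_0u)\in\tilde{\GGG}$, and so on, producing a left-infinite tail such that $Lu$ is an increasing union of $\tilde{\GGG}$-words, and symmetrically a right tail $xR$. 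Every finite subword of $L\,uvwx\,R$ then lies inside a concatenation $u^{(m)}vwx^{(n)}$ of four $\tilde{\GGG}$-words, so your step (A) applies verbatim to show this bi-infinite sequence lies in $\Xdb$, and $uvwx\in\LLL$ follows (as does $\tilde{\GGG}=\GGG$).
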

\begin{proof}
Suppose $s = t^\alpha$ is a subword of $uvwx$ for some $t\in A^+$ and some rational $\alpha > 1$. If $s$ is a subword of any of the four words $u, v, w, x$, then we have $\alpha < \beta$, since each of these words lies in $\LLL$.

If $s$ is not a subword of any of these words, then by the definition of $\GGG$, each of these words accounts for less than $4|t|$ of the indices in $s$, so $|s| < 16|t|$. This again implies that $\alpha<16\leq\beta$, proving the lemma.
\end{proof}

We will prove \ref{spec-diff} for the smaller values of $\beta$ after first showing how Lemma \ref{lem:partial-spec} can be used to ``glue words in $\GGG$ together''.

\begin{lemma}\label{lem:GbG}
Given any $d\geq 3$ and $\beta \geq 8$, the collection $\GGG$ satisfies \ref{spec-same} with $\tau = 1$.
Indeed, given $n\in \NN$ and $v,w\in \GGG_n$, let $a_1,a_2\in A$ be given by Lemma \ref{lem:partial-spec}: then for any $b\in A \setminus \{a_1,a_2\}$,
we have $vbw \in \GGG$.
\end{lemma}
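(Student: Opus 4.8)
The plan is to verify the two defining properties of $\GGG$ for the word $vbw$: that $vbw\in\LLL$, and that no prefix or suffix of $vbw$ is a $4$-power. I would first apply Lemma~\ref{lem:partial-spec} to the pair $(v,w)$ to produce $a_1,a_2$ and then set $b\in A\setminus\{a_1,a_2\}$; such a $b$ exists precisely because $d\geq 3$. Since $\tau=1$, the inserted symbol is simultaneously $u_1$ and $u_\tau$, so the two hypotheses $u_1\neq a_1$ and $u_\tau\neq a_2$ of that lemma collapse to the single requirement $b\notin\{a_1,a_2\}$.

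For the prefix/suffix condition I would invoke Remark~\ref{rmk:G4}: it suffices to check $\pre_\ell(vbw)<4$ and $\suf_\ell(vbw)<4$ for all $1\leq\ell\leq 2n+1$. When $\ell\leq\lceil n/2\rceil$, Lemma~\ref{lem:partial-spec} gives $\pre_\ell(vbw)=\pre_\ell(v)$ and $\suf_\ell(vbw)=\suf_\ell(w)$, both ${}<4$ because $v,w\in\GGG$. When $\ell>\lceil n/2\rceil$ the inequalities are automatic for length reasons: if $\pre_\ell(vbw)\geq 4$ then $vbw$ begins with a word of length $4\ell$, forcing $4\ell\leq 2n+1$ and hence $\ell\leq\lfloor(2n+1)/4\rfloor\leq\lceil n/2\rceil$, a contradiction, and symmetrically for $\suf_\ell$.

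The combinatorial heart is showing that $vbw$ is $\beta$-free, and this is exactly where the hypothesis $\beta\geq 8$ enters. Suppose for contradiction that some subword $s=t^\alpha$ has $\alpha\geq\beta\geq 8$, with period $p=|t|$. If $s$ lies entirely inside $v$ or inside $w$ then $\alpha<\beta$ since $v,w\in\LLL$, so $s$ must contain the junction symbol $b$ and we may write $s=v_{[i,n]}\,b\,w_{[1,r]}$. The left block $v_{[i,n]}$ is a suffix of $v$ that inherits period $p$ from $s$, so $\suf_p(v)\geq|v_{[i,n]}|/p$; since $v\in\GGG$ forces $\suf_p(v)<4$, we get $|v_{[i,n]}|\leq 4p-1$, and symmetrically $r\leq 4p-1$ from $\pre_p(w)<4$. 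Hence $|s|\leq(4p-1)+1+(4p-1)=8p-1$, giving $\alpha=|s|/p<8\leq\beta$, which contradicts $\alpha\geq\beta$. Note that the choice of $b$ is irrelevant here: $\beta$-freeness of $vbw$ holds for \emph{any} inserted symbol, and $b\notin\{a_1,a_2\}$ is needed only for the prefix/suffix condition above.

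The step I expect to require the most care is upgrading ``$vbw$ is $\beta$-free'' to ``$vbw\in\LLL$'', i.e.\ extendability to a bi-infinite point of $\Xdb$, since by Remark~\ref{rmk:extendable} these are genuinely different conditions for power-free shifts. My plan is to extend $v$ leftward and $w$ rightward to one-sided $\beta$-free sequences $L$ and $R$ having, respectively, no $4$-power suffix and no $4$-power prefix at any period; then $LbR$ is $\beta$-free at the junction by exactly the block-counting estimate of the previous paragraph (left part ${}<4p$, right part ${}<4p$, total ${}<8p$), so $vbw$ embeds in a point of $\Xdb$ and hence lies in $\LLL$. The substance is in building these extensions one symbol at a time while staying $\beta$-free: a newly adjoined symbol can only lengthen a period-$p$ run that is a period of the entire current word, and a Fine--Wilf argument (Corollary~\ref{cor:FW}) should bound how many periods can simultaneously be in this danger zone without producing a short common period forbidden by the $\GGG$-type suffix/prefix control, so that only a bounded number of continuation symbols are forbidden and $d\geq 3$ leaves a legal choice. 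Verifying that this prolongation never gets stuck is the one place where I would expect to spend real effort.
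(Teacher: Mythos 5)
Your first three paragraphs are precisely the paper's proof: the junction estimate (a crossing power $s=t^\alpha$ meets each of $v$ and $w$ in at most $4|t|-1$ symbols, so $|s|\leq 2(4|t|-1)+1<8|t|$ and $\alpha<8\leq\beta$), Lemma \ref{lem:partial-spec} for $\ell\leq\lceil n/2\rceil$, and the length count for $\ell>\lceil n/2\rceil$. You are also right --- and here you are being \emph{more} careful than the paper's written proof, which verifies only $\beta$-freeness and says nothing further --- that $vbw\in\GGG$ requires the extendability statement $vbw\in\LLL$, which by Remark \ref{rmk:extendable} is genuinely stronger than $\beta$-freeness.

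However, your proposed route to extendability would not work. The claim that Fine--Wilf bounds the number of forbidden continuation symbols is false: Corollary \ref{cor:FW} only merges dangerous periods whose ratio is moderate, while periods at geometrically separated scales are independent and each forbids its own symbol. Concretely, with $d=3$, the word $u^*=t^{63/16}$ with $t=1112111211121113$ is $\beta$-free (its maximal exponent is $63/16<4$) and has no $4$-power prefix or suffix, yet prepending $1$, $2$, or $3$ creates the $4$-power prefixes $1^4$, $(2111)^4$, $(3111211121112111)^4$ respectively: the dangerous periods $1,4,16$ are pairwise too far apart for Fine--Wilf to force a common refinement, and \emph{all} $d$ symbols are forbidden. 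Note also that these dangerous runs are periods of \emph{prefixes} of the current word, not of the entire word as you assert; that is exactly why no short common period is forced. The same mechanism operates at exponent $\beta$: for $\beta=8$ the word $\bigl((1^{7}2)^{7}1^{7}3\bigr)^{511/64}$ is $8$-free, but prepending or appending any of the three symbols creates an $8$-power, so it admits no one-symbol extension in either direction. This confirms that your worry is real (mere $\beta$-freeness is not a maintainable invariant), but your invariant for $L$ (no $4$-power suffix) constrains the wrong end of the word: when prepending, the dangers accumulate at the prefix end, and nothing you maintain prevents such multi-scale configurations from forming. So ``only a bounded number of continuation symbols are forbidden'' is both unjustified and false in general, and the prolongation can get stuck.

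The gap can be closed with no new combinatorics at all, which is presumably why the paper leaves it implicit. Your junction argument never uses extendability of $v$ and $w$, only $\beta$-freeness and the bounds $\pre_\ell,\suf_\ell<4$; hence it proves the gluing statement for the collection $\GGG'$ of all $\beta$-free words with no $4$-power prefix or suffix. Now glue $u_0:=vbw\in\GGG'$ to itself: set $u_{k+1}:=u_kb_ku_k\in\GGG'$, where $b_k$ is produced by applying Lemma \ref{lem:partial-spec} and the same argument to the equal-length pair $(u_k,u_k)$. Inside $u_{k+2}$ the word $u_k$ occurs with at least $|u_k|+1$ symbols on each side, so these occurrences nest with margins tending to infinity; centering them and passing to a limit using compactness of $A^\ZZ$ yields a bi-infinite sequence every finite subword of which is a subword of some $u_j$, hence $\beta$-free. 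This limit point lies in $\Xdb$ and contains $vbw$, so $vbw\in\LLL$, and together with your prefix/suffix bounds, $vbw\in\GGG$ (indeed this shows $\GGG'=\GGG$).
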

\begin{proof}
First we must show that $vbw \in \LLL$. Arguing as in the proof of Lemma \ref{lem:GGGG}, observe that if $s=t^\alpha$ is a subword of $vbw$, then either:
\begin{itemize}
\item  $s$ is a subword of $v$ or $w$, in which case $\alpha < \beta$ since $v,w\in \LLL$; or
\item each of $v$ and $w$ accounts for at most $4|t|-1$ of the indices in $s$, in which case $|s| \leq 2(4|t|-1) + 1 = 8|t| - 1 < 8|t|$, implying that $\alpha < 8 \leq \beta$.
\end{itemize}
Recalling Remark \ref{rmk:G4}, to show that $vbw \in \GGG$, it suffices to prove that $\pre_\ell(vbw) < 4$ and $\suf_\ell(vbw) < 4$.
For every $\ell \leq \lceil n/2 \rceil$, we have $\pre_\ell(vbw) = \pre_\ell(v) < 4$ and $\suf_\ell(vbw) = \suf_\ell(w) < 4$ by Lemma \ref{lem:partial-spec}. Moreover, if $\ell > \lceil n/2 \rceil$, then $\ell \geq \lceil n/2 \rceil + 1 \geq (n+2)/2$, so $4\ell \geq 2n + 4 > |vbw|$, and thus $\pre_\ell(vbw) < 4$ and $\suf_\ell(vbw) < 4$. We conclude that $vbw \in \GGG$, as claimed.
\end{proof}

Lemma \ref{lem:GbG} does not establish property \ref{spec-same} when $d=2$, since in this case the set $A \setminus \{a_1,a_2\}$ might be empty. Thus we need to allow the use of a $2$-symbol word in order to ``disrupt the shorter periods''. This extra symbol requires us to exclude the case $\beta = 8$.

\begin{lemma}\label{lem:GuG}
Given $d=2$ and $\beta > 8$, the collection $\GGG$ satisfies \ref{spec-same} with $\tau = 2$.
Indeed, given $n\in \NN$ and $v,w\in \GGG_n$, let $a_1,a_2\in A$ be given by Lemma \ref{lem:partial-spec}: then for any $u\in A^2$ satisfying $u_1 \neq a_1$ and $u_2 \neq a_2$, we have $vuw \in \GGG$.
\end{lemma}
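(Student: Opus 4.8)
The plan is to follow the proof of Lemma~\ref{lem:GbG} almost verbatim, making the single structural change forced by $d=2$: since $A=\{1,2\}$, the set $A\setminus\{a_1,a_2\}$ can be empty (precisely when $a_1\neq a_2$), so a one-symbol transition word need no longer exist. I therefore replace the single symbol $b$ by a two-symbol word $u=u_1u_2$, choosing $u_1\neq a_1$ and $u_2\neq a_2$; for $d=2$ this determines $u$ uniquely, and such a $u$ always exists. With $a_1,a_2$ supplied by Lemma~\ref{lem:partial-spec}, it remains to check (a) that $vuw\in\LLL$, which as in Lemma~\ref{lem:GbG} I do by verifying $\beta$-freeness, and (b) that $\pre_\ell(vuw)<4$ and $\suf_\ell(vuw)<4$ for all $\ell$, which by Remark~\ref{rmk:G4} gives $vuw\in\GGG$.

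For (a) I would repeat the counting argument: let $s=t^\alpha$ be a power subword of $vuw$ and set $p=|t|$. If $s$ lies inside $v$ or inside $w$, then $\alpha<\beta$ since $v,w\in\LLL$. Otherwise $s$ meets the transition block $u$; its portion inside $v$ is a suffix of $v$ that is also a prefix of the $p$-periodic word $s$, hence has length at most $4p-1$ (a longer such suffix would exhibit a $4$-power suffix of $v$, contradicting $v\in\GGG$), and symmetrically its portion inside $w$ has length at most $4p-1$. Since $u$ contributes at most $|u|=2$ further symbols, $|s|\leq(4p-1)+2+(4p-1)=8p$, so $\alpha\leq 8<\beta$. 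This is exactly where the hypothesis $\beta>8$ (rather than $\beta\geq 8$) is needed: with two transition symbols the extremal span attains $\alpha=8$, so $\beta=8$ cannot be admitted, which explains why the case $d=2$ is stated separately and excludes $\beta=8$.

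For (b), the choice of $a_1,a_2$ together with $u_1\neq a_1$ and $u_2\neq a_2$ lets me invoke Lemma~\ref{lem:partial-spec} with $\tau=2$: for every $\ell\leq\lceil n/2\rceil$ it gives $\pre_\ell(vuw)=\pre_\ell(v)<4$ and $\suf_\ell(vuw)=\suf_\ell(w)<4$, using $v,w\in\GGG$. For $\ell>\lceil n/2\rceil$ a length count finishes the job: then $\ell\geq\lceil n/2\rceil+1\geq(n+2)/2$, so $4\ell\geq 2n+4>2n+2=|vuw|$, whence no prefix or suffix of $vuw$ can repeat four times. Combining the two ranges of $\ell$ yields $vuw\in\GGG$.

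The only genuinely new point compared with Lemma~\ref{lem:GbG} is the bookkeeping in (a) that produces $\alpha\leq 8$ with the extra transition symbol, forcing the strict inequality $\beta>8$; everything else is a direct transcription. I expect no serious obstacle, the main thing to get right being that a two-symbol transition word really does disrupt all short periods on both sides simultaneously, which is precisely the content of Lemma~\ref{lem:partial-spec} applied to $u$ of length $\tau=2$.
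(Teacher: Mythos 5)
Your proposal is correct and follows essentially the same argument as the paper: mimic Lemma~\ref{lem:GbG} with a two-symbol transition word, obtaining $|s|\leq 2(4|t|-1)+2=8|t|$ and hence $\alpha\leq 8<\beta$ (which is exactly why $\beta=8$ must be excluded), and then verify $\pre_\ell(vuw)<4$ and $\suf_\ell(vuw)<4$ via Lemma~\ref{lem:partial-spec} for $\ell\leq\lceil n/2\rceil$ and a length count for $\ell>\lceil n/2\rceil$. The only difference is that you spell out why each of $v,w$ contributes at most $4|t|-1$ indices to $s$ (periodicity would force a $4$-power suffix or prefix), a point the paper leaves implicit.
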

\begin{proof}
We mimic the proof of Lemma \ref{lem:GbG}, with minor modifications. To show that $vuw \in \LLL$, observe that if $s=t^\alpha$ is a subword of $vuw$, then either:
\begin{itemize}
\item  $s$ is a subword of $v$ or $w$, in which case $\alpha < \beta$ since $v,w\in \LLL$; or
\item each of $v$ and $w$ accounts for at most $4|t|-1$ of the indices in $s$, in which case $|s| \leq 2(4|t|-1) + 2 = 8|t|$, implying that $\alpha \leq 8 < \beta$.
\end{itemize}
So it suffices to prove that $\pre_\ell(vuw) < 4$ and $\suf_\ell(vuw) < 4$, which follows as before: given $\ell \leq \lceil n/2 \rceil$, Lemma \ref{lem:partial-spec} gives $\pre_\ell(vuw) = \pre_\ell(v) < 4$ and $\suf_\ell(vuw) = \suf_\ell(w) < 4$; and given $\ell > \lceil n/2 \rceil$, we have $\ell \geq \lceil n/2 \rceil + 1 \geq (n+2)/2$, so $4\ell \geq 2n + 4 > |vuw|$, and thus $\pre_\ell(vuw) < 4$ and $\suf_\ell(vuw) < 4$, completing the proof.
\end{proof}

Now we extend Lemma \ref{lem:GGGG} to smaller values of $\beta$ by increasing $T$.

\begin{lemma}\label{lem:GpGqGrG}
Given any $d\geq 3$ and $\beta \geq 12$, the collection $\GGG$ satisfies \ref{spec-diff} with $T=1$:
for every $u,v,w,x\in \GGG$, there exist $a,b,c\in A$ such that $uavbwcx \in \LLL$.
If $d=2$ and $\beta > 12$, then $\GGG$ satisfies \ref{spec-diff} with $T=2$.
\end{lemma}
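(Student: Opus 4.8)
The plan is to follow the counting scheme of Lemmas \ref{lem:GGGG}--\ref{lem:GuG}, now gluing four words with three connectors, and to choose the \emph{middle} connector so that it disrupts the periods of any long repetition that straddles both interior words $v$ and $w$. For $d\geq 3$ I would take the outer connectors to be arbitrary symbols $a,c\in A$ and pick the middle symbol $b$ from Lemma \ref{lem:partial-spec} applied to the pair $(v,w)$: with $a_1,a_2$ as in that lemma, choose any $b\in A\setminus\{a_1,a_2\}$, which is possible precisely because $d\geq 3$. For $d=2$ the middle connector must instead be a length-$2$ word $Q=Q_1Q_2$ with $Q_1\neq a_1$ and $Q_2\neq a_2$ (the outer connectors remaining arbitrary length-$2$ words), and the fact that $A\setminus\{a_1,a_2\}$ can be empty is exactly what forces $T=2$ and excludes $\beta=12$ in this case.

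With the connectors fixed, I would show the glued word contains no $\alpha$-power with $\alpha\geq\beta$, hence lies in $\LLL$. Suppose $s=t^\alpha$ is a subword with period $p=|t|$ and $\alpha\geq\beta\geq 12$, so that $|s|\geq 12p$; the goal is to contradict this by bounding $|s|$. As in Lemma \ref{lem:GGGG}, the portion of $s$ inside any one of $u,v,w,x$ inherits period $p$, and since no prefix or suffix of a word in $\GGG$ is a $4$-power (Remark \ref{rmk:G4}), each such portion has length at most $4p-1$, while the connectors contribute only their own length. If $s$ meets at most one connector then $|s|\leq 8p<12p$ immediately, so the real work is in the cases where $s$ fully contains at least one interior word.

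The crux, and the main obstacle, is exactly this: once $s$ contains a full interior word, the naive count reproduces only the bound $\alpha<16$ of Lemma \ref{lem:GGGG}, which is too weak. Here I would invoke that the middle connector was chosen to kill small periods: if the central block $vbw$ (resp.\ $vQw$), or even just its prefix consisting of $v$ followed by the part of $s$ lying in the connector, were periodic with some period $q\leq\lceil |v|/2\rceil$, then $\pre_q(vbw)$ would be at least (the length of that periodic prefix)$/q>|v|/q\geq\pre_q(v)$, contradicting the identity $\pre_q(vbw)=\pre_q(v)$ supplied by Lemma \ref{lem:partial-spec}; the $\suf$ half of that lemma symmetrically rules out periods $q\leq\lceil |w|/2\rceil$. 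Taking $q=p$ forces $p>\lceil |v|/2\rceil$ and $p>\lceil |w|/2\rceil$, hence $|v|,|w|\leq 2p-2$. Substituting these improved bounds — two end-contributions of at most $4p-1$, the interior word(s) of at most $2p-2$, plus the connector symbols — gives $|s|\leq 12p-3$ when $d\geq 3$ and $|s|\leq 12p$ when $d=2$, contradicting $|s|\geq 12p$ (for $d\geq 3$) or $|s|>12p$ (for $d=2$, where $\beta>12$ enters).

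The same disruption handles the two-connector case, since there the fully contained interior word sits next to the middle connector, so the one-sided estimate above still applies. What remains is bookkeeping: counting the integer length bounds exactly, and checking that $d\geq 3$ produces a strict inequality (so that $\beta=12$ is admissible) whereas $d=2$ yields only $|s|\leq 12p$ (so that $\beta>12$ is genuinely needed). I expect the all-three-connector case, and the precise integer accounting there, to be the only delicate points; the remaining configurations are routine adaptations of Lemmas \ref{lem:GGGG}--\ref{lem:GuG}.
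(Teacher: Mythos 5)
Your proof is correct, and its engine is the same as the paper's: Lemma \ref{lem:partial-spec} supplies a period-disrupting connector, membership in $\GGG$ caps any periodic prefix or suffix of an outer word at $4|t|-1$ symbols, the disruption caps any interior word that $s$ overhangs on both sides at $2|t|-2$ symbols, and the final totals $2(4|t|-1)+2(2|t|-2)+3=12|t|-3$ (for $d\geq 3$) and $12|t|$ (for $d=2$, with length-two connectors) are exactly the paper's, including the resulting distinction between $\beta\geq 12$ and $\beta>12$. The one genuine difference is which connectors are chosen carefully: the paper applies Lemma \ref{lem:partial-spec} three times, to the pairs $(u,v)$, $(v,w)$, $(w,x)$, so that every intersected word $z$ satisfies a uniform, position-independent dichotomy (either $|t|\leq\lceil |z|/2\rceil$ and $s$ cannot overhang both ends of $z$, or $|z|\leq 2|t|-2$), whereas you apply it only to the middle pair $(v,w)$ and let the outer connectors be arbitrary. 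Your economization is sound, for precisely the reason you identify: the only words that $s$ can overhang on both sides are $v$ and $w$ (nothing lies beyond $u$ on the left or $x$ on the right), each of these abuts the middle connector, and covering even one symbol of that connector forces $\pre_{|t|}(vbw)>\pre_{|t|}(v)$ (resp.\ $\suf_{|t|}(vbw)>\suf_{|t|}(w)$), contradicting Lemma \ref{lem:partial-spec} when $|t|\leq\lceil|v|/2\rceil$ (resp.\ $\lceil|w|/2\rceil$); meanwhile the $4|t|-1$ bound for the two extreme words uses only $u,x\in\GGG$ and no property of any connector. So your version proves a marginally stronger statement (only the middle connector needs to be chosen; $a$ and $c$ are free) at the price of a configuration-by-configuration case analysis, while the paper's symmetric choice buys a cleaner count in which the worst case is read off in one line. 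One shared feature worth noting, not a gap in your argument relative to the paper's: both proofs verify $\beta$-power-freeness of the glued word and then conclude membership in $\LLL$, which by Remark \ref{rmk:extendable} denotes the extendable language.
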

\begin{proof}
First suppose that $d\geq 3$ and $\beta\geq 12$. Given $u,v,w,x\in \GGG$, apply Lemma \ref{lem:partial-spec} to $u,v$ to get $a_1,a_2 \in A$ such that fixing $a\in A \setminus \{a_1,a_2\}$, we have
\begin{align*}
\pre_\ell(uav) = \pre_\ell(u) < 4 &\text{ for every } \ell \in \{1,\dots, \lceil |u|/2\rceil \}, \\
\suf_\ell(uav) = \suf_\ell(v) < 4 &\text{ for every } \ell \in \{1,\dots, \lceil |v|/2\rceil \}.
\end{align*}
Obtain $b = b(v,w)$ and $c = c(w,x)$ similarly. Now suppose $s = t^\alpha$ is a subword of $uavbwcx$. If $s$ is a subword of any of $u,v,w,x$, then $\alpha < \beta$ since each of these words lies in $\LLL$. So we can restrict our attention to the case when $s$ includes either the beginning or the end of every word in $\{u,v,w,x\}$ that it intersects. Let $z$ denote any of these four words that is intersected by $s$: then one of the following occurs:
\begin{itemize}
\item $|t| \leq \lceil|z|/2\rceil$, in which case Lemma \ref{lem:partial-spec} guarantees that $s$ does not extend past both endpoints of $z$; or
\item $|t| > \lceil|z|/2\rceil$, in which case $|t| \geq \frac{|z|}2 + 1$, so $|z| \leq 2(|t|-1)$.
\end{itemize}
The first of these two cases can occur for at most two distinct $z$, and for each such $z$, the word $s$ crosses at most $4|t|-1$ of the symbols in $z$. We conclude that
\[
|s| \leq (4|t|-1) + 1 + (2|t|-2) + 1 + (2|t|-2) + 1 + (4|t|-1)
= 12|t| - 3 < 12|t|,
\]
so $\alpha < 12\leq \beta$, which completes the proof of the lemma when $d\geq 3$. For the case $d=2$, we replace the symbols $a,b,c$ with words $p,q,r\in A^2$ once again provided by Lemma \ref{lem:partial-spec} in just the same way (following the procedure in Lemma \ref{lem:GuG}). The rest of the argument is the same, except that in the final inequality we obtain $|s| \leq 12|t|$ instead of $|s| \leq 12|t|-3$.
\end{proof}

\subsection{An entropy gap}\label{sec:get-gap}

\subsubsection{Upper bound on prefixes and suffixes}

Since $\CCC^p = \CCC^s = \AAA^*$, we must obtain an upper bound for $\#(\AAA^*)_n$ to prove Condition \ref{h-gap}. Every element of $\AAA$ is a $4$-power, so $\#(\AAA^*)_n = 0$ if $n$ is not a multiple of $4$. Given $w\in (\AAA^*)_{4k}$ for some $k\in \NN$, we can write $w = (v^{(1)})^4 \cdots (v^{(\ell)})^4$, where $v^{(1)}, \dots, v^{(\ell)} \in \LLL$ and $\sum_{i=1}^\ell |v^{(i)}| = k$. Writing
\[
\mathbb{J}_\ell := \{ \mathbf{k} = (k_1,\dots, k_\ell) \in \NN^\ell : k_1 + \cdots + k_\ell = k \},
\]
and using the bound $\#\LLL_{k_i} \leq d^{k_i}$, we have
\[
(\AAA^*)_{4k} \subset \bigcup_{\ell=1}^k \bigcup_{\mathbf{k} \in \mathbb{J}_\ell} \prod_{i=1}^\ell \LLL_{k_i}
\quad\Rightarrow\quad
\#(\AAA^*)_{4k} \leq \sum_{\ell=1}^k \sum_{\mathbf{k} \in \mathbb{J}_\ell} \prod_{i=1}^\ell \#\LLL_{k_i}
\leq \sum_{\ell=1}^k (\#\mathbb{J}_\ell) d^k.
\]
There is a bijection between $\mathbb{J}_\ell$ and subsets of $\{1,\dots, k-1\}$ with $\ell-1$ elements, obtained by mapping $\mathbf{k}$ to $\{s_1,\dots, s_{\ell-1}\}$, where $s_j = \sum_{i=1}^j k_i$. Thus $\#\mathbb{J}_\ell = \binom{k-1}{\ell-1}$ and $\sum_{\ell=1}^k \#\mathbb{J}_\ell = 2^{k-1}$, which gives the bound
\begin{equation}\label{eqn:Ak-leq}
\#(\AAA^*)_{4k} \leq \frac 12 (2d)^k
\quad\Rightarrow\quad
h(\AAA^*) := \ulim_{n\to\infty} \frac 1n \log \#(\AAA^*)_{n}
\leq \frac 14 \log(2d).
\end{equation}

\subsubsection{General results on specification and growth}\label{sec:spec-growth}

In order to prove a lower bound on $h(X)$, which we do in \S\ref{sec:get-growth}, we first prove some general results about shift spaces. The first of these is Lemma \ref{lem:general-spec}, which stated that the ``one-step'' specification property \ref{spec-same} implies the ``multi-step'' property \ref{spec-same-2}. Following this, we prove that \ref{spec-same-2} gives a uniform relationship between $\#\GGG_n$ and $e^{nh}$, which can be extended to $\#\LLL_n$ using \ref{decomp} and \ref{PR}.

\begin{proof}[Proof of Lemma \ref{lem:general-spec}]
Given $k\in \NN$ and $\DDD \subset \LLL$, let $\mathbf{S}(k,\DDD)$ be the statement that
\begin{itemize}
\item for every $n\in \NN$ and $w^{(1)},\dots, w^{(k)} \in \GGG_n$, there exist $u^{(1)},\dots, u^{(k-1)} \in \LLL_\tau$ such that $w^{(1)}u^{(1)}w^{(2)}\cdots u^{(k-1)} w^{(k)} \in \DDD$.
\end{itemize}
Then the hypothesis of the lemma is that $\mathbf{S}(2,\GGG)$ is true, and \ref{spec-same-2} is the condition that $\mathbf{S}(k,\LLL)$ is true for every $k\in \NN$. 

First observe that if $\mathbf{S}(k,\LLL)$ is true, then $\mathbf{S}(j,\LLL)$ is true for every $1\leq j\leq k$, since given any $w^{(1)}, \dots, w^{(j)} \in \GGG_n$, we can simply add $k-j$ copies of $w^{(j)}$ to the list, apply $\mathbf{S}(k,\LLL)$ to obtain $w^{(1)} u^{(1)} w^{(2)} \cdots u^{(j-1)} w^{(j)} u^{(j)} w^{(j+1)} \cdots u^{(k-1)} w^{(k)} \in \LLL$, and then truncate to obtain the desired word.

To prove \ref{spec-same-2}, it thus suffices to prove that $\mathbf{S}(2^\ell,\LLL)$ is true for every $\ell\in \NN$. We will prove the stronger statement $\mathbf{S}(2^\ell,\GGG)$ by induction. The base case $\ell=1$ is given. Suppose that $\mathbf{S}(2^\ell,\GGG)$ is true for some $\ell\in \NN$. Given $w^{(1)},\dots, w^{(k)} \in \GGG_n$ for $k = 2^{\ell+1}$, apply $\mathbf{S}(2^\ell,\GGG)$ twice:
\begin{itemize}
\item first, to $w^{(1)},\dots, w^{(2^\ell)}$, getting connecting words $u^{(1)},\dots, u^{(2^\ell-1)} \in \LLL_\tau$ such that $v^{(1)} := w^{(1)} u^{(1)} w^{(2)} \cdots u^{(2^\ell-1)} w^{(2^\ell)} \in \GGG$;
\item second, to $w^{(2^\ell+1)},\dots, w^{(2^{\ell+1})}$, getting $u^{(2^\ell+1)},\dots, u^{(2^{\ell+1}-1)} \in \LLL_\tau$ such that $v^{(2)} := w^{(1)} u^{(1)} w^{(2)} \cdots u^{(2^{\ell+1}-1)} w^{(2^{\ell+1})} \in \GGG$.
\end{itemize}
Then apply $\mathbf{S}(2,\GGG)$ to the words $v^{(1)},v^{(2)} \in \GGG_{2^\ell n + (2^\ell - 1)\tau}$, obtaining $u^{(2^\ell)} \in \LLL_\tau$ such that $v^{(1)} u^{(2^\ell)} v^{(2)} \in \GGG$, which proves that $\mathbf{S}(2^{\ell+1},\GGG)$ is true.
\end{proof}

\begin{lemma}\label{lem:G-leq}
Let $\LLL$ be the language of a shift space, and $h$ its topological entropy.
If $\GGG \subset \LLL$ satisfies \ref{spec-same-2}, then writing $C_1 = e^{\tau h}$, we have
\begin{equation}\label{eqn:G-leq}
\#\GGG_n \leq C_1 e^{nh} \quad\text{for all } n\in \NN.
\end{equation}
\end{lemma}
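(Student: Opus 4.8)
The plan is to use the multi-gluing property \ref{spec-same-2} to embed a large Cartesian power of $\GGG_n$ into a single length-class of $\LLL$, and then to extract the bound \eqref{eqn:G-leq} by taking $k$-th roots and sending $k\to\infty$.

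First I would fix $n$ and an arbitrary $k\in\NN$, and consider the set $(\GGG_n)^k$ of all $k$-tuples $(w^{(1)},\dots,w^{(k)})$ of length-$n$ words in $\GGG$. By \ref{spec-same-2}, for each such tuple there exist connecting words $u^{(1)},\dots,u^{(k-1)}\in\LLL_\tau$ such that $W := w^{(1)}u^{(1)}w^{(2)}\cdots u^{(k-1)}w^{(k)}\in\LLL$. Selecting one such family of connecting words for each tuple defines a map $\Phi\colon (\GGG_n)^k \to \LLL_{kn+(k-1)\tau}$. The only point requiring care is that $\Phi$ is \emph{injective}: because every $w^{(i)}$ has the common length $n$ and every $u^{(i)}$ has length $\tau$, the factor $w^{(i)}$ occupies the fixed index range $[(i-1)(n+\tau)+1,\ (i-1)(n+\tau)+n]$ inside $W$, so each $w^{(i)}$ can be recovered from $\Phi(w^{(1)},\dots,w^{(k)})$ regardless of which connecting words were chosen. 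Injectivity gives the counting inequality $(\#\GGG_n)^k \le \#\LLL_{kn+(k-1)\tau}$.

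Taking $k$-th roots, I would conclude $\log\#\GGG_n \le \frac1k \log\#\LLL_{m_k}$ for every $k$, where $m_k := kn+(k-1)\tau$. Writing $\frac1k\log\#\LLL_{m_k} = \frac{m_k}{k}\cdot\frac{1}{m_k}\log\#\LLL_{m_k}$ and letting $k\to\infty$, the first factor tends to $n+\tau$ and the second tends to $h$ by the definition of topological entropy in \eqref{eqn:h}. This yields $\log\#\GGG_n \le (n+\tau)h$, i.e.\ $\#\GGG_n \le e^{(n+\tau)h} = e^{\tau h}e^{nh} = C_1 e^{nh}$, as claimed.

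The argument involves no serious obstacle beyond the injectivity bookkeeping; the fact that all words in $\GGG_n$ share the common length $n$ is precisely what makes the positions of the factors inside $W$ rigid, and this is the one place where the restriction of \ref{spec-same-2} to words of the same length is essential.
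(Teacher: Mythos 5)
Your proposal is correct and is essentially the paper's own proof: the same gluing map $(\GGG_n)^k \to \LLL_{kn+(k-1)\tau}$ via \ref{spec-same-2}, the same injectivity observation exploiting the common length $n$, and the same limit $k\to\infty$ after taking logarithms. The only cosmetic difference is that you pass to the limit along the lengths $kn+(k-1)\tau$ directly, while the paper first bounds $\#\LLL_{kn+(k-1)\tau} \leq \#\LLL_{k(n+\tau)}$; both routes give $(n+\tau)h$.
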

\begin{proof}
Fix $n\in \NN$. Given $k\in \NN$, use \ref{spec-same-2} to define a map $(\GGG_n)^k \to \LLL_{kn+(k-1)\tau}$ by $(w^{(1)},\dots, w^{(k)}) \mapsto w^{(1)} u^{(1)} w^{(2)} \cdots u^{(k-1)} w^{(k)}$, where the connecting words $u^{(i)} \in \LLL_\tau$ depend on the words $w^{(i)}$. This map is injective, so
\[
\#\LLL_{k(n+\tau)} \geq \#\LLL_{kn+(k-1)\tau}
\geq (\#\GGG_n)^k.
\]
Taking logs and dividing by $k$, we get
\[
\log \#\GGG_n \leq \frac 1k \log \#\LLL_{k(n+\tau)}
= \frac{(n+\tau)}{k(n+\tau)} \log \#\LLL_{k(n+\tau)}
\xrightarrow{k\to\infty} (n+\tau) h.
\]
Taking exponentials proves the lemma.
\end{proof}

For the next lemma, we recall from \ref{PR} that
$Q := (\sum_{i=0}^\infty \#\Cp_i e^{-ih})(\sum_{k=0}^\infty \#\Cs_k e^{-kh})$. By convention, we include the empty word in both $\Cp$ and $\Cs$, so $\#\Cp_0 = \#\Cs_0 = 1$, and one can also write
\begin{equation}\label{eqn:Q}
Q = \bigg( 1 + \sum_{i=1}^\infty \#\Cp_i e^{-ih} \bigg)
\bigg( 1 + \sum_{k=1}^\infty \#\Cs_k e^{-kh} \bigg).
\end{equation}

\begin{lemma}\label{lem:L-leq}
Let $\LLL$ be the language of a shift space, and $h$ its topological entropy.
Suppose $\Cp,\GGG,\Cs \subset \LLL$ satisfy \ref{decomp} and \ref{PR}, so that $Q<\infty$.
If $\GGG$ satisfies \eqref{eqn:G-leq}, then we have
\begin{equation}\label{eqn:L-leq}
\#\LLL_n \leq C_1 Q e^{nh} \text{ for all } n\in \NN,
\end{equation}
\end{lemma}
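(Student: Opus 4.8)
The plan is to convert the decomposition \ref{decomp} into an injective encoding of $\LLL_n$ by triples, then factor $e^{nh}$ out of the resulting count and recognize what remains as the product $Q$ from \ref{PR}.

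First I would fix, once and for all, a choice of decomposition $w = u^p v u^s$ with $u^p\in \Cp$, $v\in\GGG$, $u^s\in\Cs$ for each $w\in\LLL$; condition \ref{decomp} guarantees that at least one exists, and making a definite selection renders the assignment $w\mapsto (u^p,v,u^s)$ well-defined. Since $w$ is recovered simply by concatenating the three pieces, this assignment is injective, so it embeds $\LLL_n$ into the disjoint union of the products $\Cp_i\times\GGG_m\times\Cs_k$ over nonnegative integers with $i+m+k=n$. Counting the image against the total number of triples gives
\[
\#\LLL_n \;\leq\; \sum_{i+m+k=n} \#\Cp_i \, \#\GGG_m \, \#\Cs_k,
\]
where the sum ranges over $i,m,k\geq 0$.

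Next I would substitute the hypothesis \eqref{eqn:G-leq}, namely $\#\GGG_m \leq C_1 e^{mh}$, and use $m=n-i-k$ to write $e^{mh}=e^{nh}e^{-ih}e^{-kh}$. This decouples the remaining sum over the prefix and suffix lengths:
\[
\#\LLL_n \;\leq\; C_1 e^{nh} \sum_{i+k\leq n} \#\Cp_i\, e^{-ih}\,\#\Cs_k\, e^{-kh}
\;\leq\; C_1 e^{nh}\Bigl(\sum_{i=0}^\infty \#\Cp_i e^{-ih}\Bigr)\Bigl(\sum_{k=0}^\infty \#\Cs_k e^{-kh}\Bigr).
\]
By \ref{PR} the product of the two series is exactly $Q<\infty$, which yields $\#\LLL_n \leq C_1 Q e^{nh}$, as claimed.

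The argument is essentially bookkeeping, and there is no genuine obstacle; the only two points meriting care are minor. The first is the passage from mere existence of a decomposition to an injective encoding, which is handled by committing to a single choice per word and observing that concatenation inverts it. The second is the final extension of the truncated double sum over $i+k\leq n$ to the full product of the two series: this is legitimate because every term is nonnegative, so the truncated sum is dominated termwise, and the dominating product is finite precisely by condition \ref{PR}.
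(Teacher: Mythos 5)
Your proposal is correct and follows essentially the same argument as the paper: decompose each word via \ref{decomp}, bound the resulting triple count using $\#\GGG_m \leq C_1 e^{mh}$, and enlarge the truncated double sum over prefix/suffix lengths to the full product of series, which equals $Q$ by \ref{PR}. The only cosmetic difference is that you phrase the counting step as an injective encoding, while the paper simply notes the set containment $\LLL_n \subset \bigcup_{i,k} \Cp_i \GGG_{n-(i+k)} \Cs_k$; these are the same estimate.
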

\begin{proof}
By \ref{decomp}, we have $\LLL_n \subset \bigcup_{i=0}^n \bigcup_{k=0}^{n-i}
\Cp_i \GGG_{n-(i+k)} \Cs_k$, so
\[
\#\LLL_n \leq \sum_{i=0}^n \sum_{k=0}^{n-i} \#\Cp_i \#\GGG_{n-(i+k)} \#\Cs_k
\leq \sum_{i=0}^n \sum_{k=0}^{n-i} \#\Cp_i (C_1 e^{nh} e^{-ih} e^{-kh}) \#\Cs_k.
\qedhere
\]
\end{proof}

\subsubsection{Lower bounds on growth}\label{sec:get-growth}

Returning to the shift spaces $\Xdb$, fix $(\beta,d)$ such that $d\geq 3$ and $\beta \geq 8$, or $d=2$ and $\beta\geq 8^+$.
For these parameter values,
we saw in Lemmas \ref{lem:GbG} and \ref{lem:GuG} that $\GGG$ satisfies \ref{spec-same} with $\tau=1$ when $d\geq 3$, and $\tau=2$ when $d=2$. (We did not prove \ref{spec-diff} for $\beta < 12$, but that will not matter for the results in this section.)
By Lemma \ref{lem:general-spec}, $\GGG$ satisfies \ref{spec-same-2}, so Lemma \ref{lem:G-leq} gives
\begin{equation}\label{eqn:G-leq-d}
\#\GGG_n \leq \begin{cases}
e^{(n+2) h}, &\text{if } d=2, \\
e^{(n+1) h}, &\text{if } d\geq 3.
\end{cases}
\end{equation}
Observe that
\[
A^4 \setminus \GGG_4 = \{ a^4 : a\in A \}
\quad\Rightarrow\quad
\#\GGG_4 = d^4 - d.
\]
Thus when $d\geq 3$, we can use \eqref{eqn:G-leq-d} to get
\begin{equation}\label{eqn:e4h}
d^4 - d \leq \#\GGG_4 \leq e^{4h} e^h \leq d e^{4h}
\quad\Rightarrow\quad
d^3 - 1 \leq e^{4h}.
\end{equation}
Since $d^3 - 1 > 2d$ for all $d\geq 3$, we can deduce from \eqref{eqn:Ak-leq} and \eqref{eqn:e4h} that
\[
h(\AAA^*) \leq \frac 14 \log(2d) < \frac 14 \log(d^3 - 1) \leq h(X),
\]
which verifies \ref{h-gap} when $d\geq 3$. We can also prove the second upper bound in \eqref{eqn:card} using Lemma \ref{lem:L-leq} and the fact that
\eqref{eqn:Ak-leq} and \eqref{eqn:e4h} give
\begin{equation}\label{eqn:sum-3}
\begin{aligned}
\sum_{n=1}^\infty \#(\AAA^*)_n e^{-nh}
&= \sum_{k=1}^\infty \#(\AAA^*)_{4k} e^{-4kh}
\leq \sum_{k=1}^\infty \frac 12 \frac{(2d)^k}{(d^3-1)^k} \\
&= \frac 12 \cdot \frac {\frac{2d}{d^3-1}}{1- \frac{2d}{d^3-1}} 
= \frac{d}{d^3-2d-1}.
\end{aligned}
\end{equation}
It remains to verify \ref{h-gap} when $d=2$. In this case, using $\GGG_4$ as in \eqref{eqn:e4h} produces the weaker bound $d^4 - d \leq d^2 e^{4h}$ because of the weaker inequality in \eqref{eqn:G-leq-d}, and it turns out that this is not enough to establish \ref{h-gap}. Thus we work instead with $\GGG_8$, observing that
\[
A^8 \setminus \GGG_8
= \{ a^4 w : a\in A, w\in A^4\}
\cup \{ vb^4 : v\in A^4, b\in A\}
\cup \{u^4 : u\in A^2\},
\]
from which we conclude that
\[
\#\GGG_8 \geq d^8 - 2d^5 - d^2 = 256 - 64 - 4 = 188.
\]
Using this together with \eqref{eqn:G-leq-d} and the fact that $e^{2h} \leq d^2 = 4$ gives
\[
e^{8h} \geq e^{-2h} \#\GGG_8 \geq 47,
\]
so we can establish \ref{h-gap} using \eqref{eqn:Ak-leq} to write
\[
h(\AAA^*) \leq \frac 14 \log(4)
= \frac 18 \log(16) < \frac 18 \log(47) \leq h.
\]
Now the first upper bound in \eqref{eqn:card} follows from Lemma \ref{lem:L-leq} and
\begin{equation}\label{eqn:sum-2}
\sum_{k=1}^\infty \#(\AAA^*)_{4k} e^{-4kh}
\leq \sum_{k=1}^\infty \frac 12 \cdot \frac{4^k}{(47)^{k/2}} 
= \frac 12 \cdot \frac {\frac{4}{\sqrt{47}}}{1- \frac{4}{\sqrt{47}}}
= \frac{2}{\sqrt{47}-4},
\end{equation}
since $(1+ \frac{2}{\sqrt{47}-4})^2 \leq 2.892$.

\section{Proof of the general result}\label{sec:pf-gen}

\subsection{Structure of the proof}\label{sec:structure}

We will prove Theorem \ref{thm:general} via three propositions, which we state below and prove in \S\S\ref{sec:counting}--\ref{sec:uniqueness}. These propositions use different combinations of the conditions \ref{decomp}, \ref{PR}, \ref{spec-diff}, and \ref{spec-same-2}. If all four conditions hold, then all three propositions apply and combine to establish Theorem \ref{thm:general}.

Observe that Lemmas \ref{lem:G-leq} and \ref{lem:L-leq} already proved the bounds on $\#\LLL_n$ claimed in the statement of Theorem \ref{thm:general}. Writing $C_2 := Q e^{\tau h}$, we will record these as:
\begin{equation}\label{eqn:L-bounds}
e^{nh} \leq \#\LLL_n \leq C_2 e^{nh}
\quad\text{for all } n\in \NN.
\end{equation}
Given $i,k,n\in \NN$ such that $i+k \leq n$, define a map $\ph_{i,k} \colon \LLL_n \to \LLL_{n-(i+k)}$ by
\begin{equation}\label{eqn:phi}
\ph_{i,k}(w) = w_{i+1} w_{i+2} \cdots w_{n-k-1}.
\end{equation}
That is, $\ph_{i,k}$ truncates $w$ by removing the first $i$ and last $k$ symbols. Also, given a collection of words $\DDD_n \subset \LLL_n$, we will write $[\DDD_n] = \bigcup_{w\in \DDD_n} [w] \subset X$ for the union of the corresponding cylinders.

\begin{proposition}\label{prop:good-lower}
Let $X$ be a shift space with topological entropy $h$ whose language $\LLL$ contains collections $\Cp,\GGG,\Cs$ satisfying \ref{decomp} and \ref{PR}, and for which \eqref{eqn:L-bounds} holds. 

Then there exist $\theta>0$ and $M\in \NN$ such that given any collection of words $\DDD_n \subset \LLL_n$ satisfying $\nu[\DDD_n] \geq \frac 12$ for some $n > 2M$ and some MME $\nu$, there exist $i,k \in \{0,1,\dots, M\}$ such that 
\begin{equation}\label{eqn:phi-geq}
\#\big(\ph_{i,k}(\DDD_n) \cap \GGG\big) \geq \theta e^{nh}.
\end{equation}
In particular, there exists $j\in [n-2M, n] \cap \NN$ such that
\begin{equation}\label{eqn:G-geq}
\#\GGG_j \geq \theta e^{nh}.
\end{equation}
\end{proposition}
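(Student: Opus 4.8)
The plan is to reduce the measure-theoretic hypothesis to a purely combinatorial one and then exploit \ref{decomp} and \ref{PR}. The first and most important step is to show that $\nu[\DDD_n]\ge\tfrac12$ already forces $\#\DDD_n\ge\theta_0 e^{nh}$ for a constant $\theta_0>0$ independent of $n$. The key point is that an MME has nearly maximal cylinder entropy: since $h_\nu=h$ and $H_{n+k}(\nu)\le H_n(\nu)+H_k(\nu)$, Fekete's lemma gives $H_n(\nu)\ge n h_\nu=nh$, while \eqref{eqn:L-bounds} gives $H_n(\nu)\le\log\#\LLL_n\le nh+\log C_2$. Thus the entropy deficit $\log\#\LLL_n-H_n(\nu)$ is bounded by the constant $\log C_2$. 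Conditioning the length-$n$ cylinder partition on $\{[\DDD_n],\,X\setminus[\DDD_n]\}$ and using that the conditional entropy given $[\DDD_n]$ is at most $\log\#\DDD_n$ together with $\nu[\DDD_n]\ge\tfrac12$, a one-line conditional-entropy estimate yields $\log\#\DDD_n\ge\log\#\LLL_n-2(\log C_2+\log 2)$, hence $\#\DDD_n\ge\theta_0 e^{nh}$ with, say, $\theta_0=\tfrac14 C_2^{-2}$.

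Next I would pass from $\DDD_n$ to its truncations in $\GGG$ using the decomposition, keeping the prefix and suffix short. Fix for each $w\in\DDD_n$ a decomposition $w=u^p v u^s$ as in \ref{decomp}, with $u^p\in\Cp_{i(w)}$, $v\in\GGG$, $u^s\in\Cs_{k(w)}$, so that $\ph_{i(w),k(w)}(w)=v\in\GGG$. Grouping $\DDD_n$ into the sets $\DDD_n^{i,k}:=\{w:i(w)=i,\ k(w)=k\}$ and using $\DDD_n^{i,k}\subseteq\Cp_i\,\GGG_{n-i-k}\,\Cs_k$, the bound $\#\GGG_m\le\#\LLL_m\le C_2 e^{mh}$ gives $\#\DDD_n^{i,k}\le C_2 e^{nh}(\#\Cp_i e^{-ih})(\#\Cs_k e^{-kh})$. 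Since $Q=(\sum_i\#\Cp_i e^{-ih})(\sum_k\#\Cs_k e^{-kh})<\infty$ by \ref{PR}, the tails of these series decay, so I can choose $M$ (depending only on $C_2,Q,\theta_0$, not on $n$) large enough that the words with $i>M$ or $k>M$ number at most $\tfrac12\theta_0 e^{nh}$. Subtracting from the bound of the previous step leaves at least $\tfrac12\theta_0 e^{nh}$ words with $i,k\le M$.

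Finally I would apply pigeonhole and a fiber count. Among the $(M+1)^2$ pairs $(i,k)$ with $i,k\le M$, some $\DDD_n^{i,k}$ contains at least $\tfrac{\theta_0}{2(M+1)^2}e^{nh}$ words. For that fixed $(i,k)$, the map $\ph_{i,k}$ is at most $\#\Cp_i\#\Cs_k$-to-one on $\DDD_n^{i,k}$, because $w$ is recovered from $\ph_{i,k}(w)$ together with its length-$i$ prefix in $\Cp_i$ and length-$k$ suffix in $\Cs_k$; moreover $\#\Cp_i\#\Cs_k\le C_2^2 e^{2Mh}$ is a constant. Dividing gives $\#\ph_{i,k}(\DDD_n^{i,k})\ge\theta e^{nh}$ for a suitable $\theta>0$, and since $\ph_{i,k}(\DDD_n^{i,k})\subseteq\ph_{i,k}(\DDD_n)\cap\GGG$ this is exactly \eqref{eqn:phi-geq}. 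The image lies in $\GGG_{n-i-k}$, so setting $j=n-i-k\in[n-2M,n]$ (with $n>2M$ ensuring $j\ge1$) yields \eqref{eqn:G-geq}.

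I expect the conceptual crux to be the first step: converting the single inequality $\nu[\DDD_n]\ge\tfrac12$ into a full exponential count $\#\DDD_n\gtrsim e^{nh}$. Everything rests on the bounded entropy deficit supplied by \eqref{eqn:L-bounds}, which is what makes the MME spread out enough that any half-measure set must be combinatorially large; without the uniform upper bound $\#\LLL_n\le C_2 e^{nh}$ one would only recover a weaker exponent. The remaining steps are routine bookkeeping, the only subtlety being to check that $M$ and $\theta$ can be fixed independently of $n$, which holds because the $Q$-tails decay and the pigeonhole and fiber constants depend only on $M$.
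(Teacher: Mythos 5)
Your proposal is correct and follows essentially the same route as the paper: your first step is precisely the paper's Proposition \ref{prop:unif-Katok} (a conditional-entropy estimate converting $\nu[\DDD_n]\geq\frac12$ into $\#\DDD_n\geq\theta_0 e^{nh}$, with the same constant $(2C_2)^{-2}$), and your remaining steps reproduce the paper's Lemma \ref{lem:alpha-theta}, using \ref{decomp}, the tail decay supplied by \ref{PR}, and the exponential upper bound on $\#\GGG_j$ in the same way. The only difference is bookkeeping: you partition $\DDD_n$ by decomposition type $(i,k)$, pigeonhole over the $(M+1)^2$ good pairs, and divide by the fiber multiplicity $\#\Cp_i\#\Cs_k$, whereas the paper bounds $\#\DDD_n$ directly by $S(M)\,r_n(M)$ plus the tail term---the same estimate arranged slightly differently.
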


The next proposition does not require \ref{decomp} or \ref{PR}, but does require \ref{spec-diff}.

\begin{proposition}\label{prop:Gibbs}
Let $X$ be a shift space with topological entropy $h$ whose language $\LLL$ satisfies \eqref{eqn:L-bounds} and contains a collection $\GGG$ satisfying \ref{spec-diff}. Suppose that $\GGG$ satisfies \eqref{eqn:G-geq} in the following sense:
there are $\theta>0$ and $M \in \NN$ such that for every $n\in \NN$, there exists  $j\in [n-2M, n] \cap \NN$ such that $\#\GGG_j \geq \theta e^{nh}$.

Then there exists an MME $\mu$ on $X$ that has the following Gibbs-type property: there is $c>0$ such that
\begin{equation}\label{eqn:Gibbs}
\mu[w] \geq ce^{-|w| h} \text{ for all } w\in \GGG,
\end{equation}
and such that with $T\in \NN$ as in \ref{spec-diff}, we have
\begin{equation}\label{eqn:Gibbs-2}
\mu([u] \cap \sigma^{-(|u| + T)}[v]) \geq c e^{-(|u| + |v|) h} \text{ for all } u,v\in \GGG.
\end{equation}
\end{proposition}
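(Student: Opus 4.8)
The plan is to construct the MME $\mu$ as a weak\* limit of empirical-type measures built from the words in $\GGG_j$, and then to exploit \ref{spec-diff} to transfer the lower counting bound \eqref{eqn:G-geq} into the pointwise Gibbs bounds \eqref{eqn:Gibbs} and \eqref{eqn:Gibbs-2}. Concretely, for each large $n$ let $j = j(n) \in [n-2M,n]$ be the index guaranteed by hypothesis, so that $\#\GGG_j \geq \theta e^{nh} \geq \theta e^{jh}$. Define measures
\begin{equation*}
\mu_n := \frac{1}{\#\GGG_{j(n)}} \sum_{w\in \GGG_{j(n)}} \frac{1}{j(n)} \sum_{i=0}^{j(n)-1} \delta_{\sigma^i x_w},
\end{equation*}
where $x_w \in [w]$ is any chosen point, and then pass to a weak\* convergent subsequence $\mu_{n_m} \to \mu$. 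Standard arguments (as in the proof of the variational principle, e.g. \cite{pW82}) show that any such limit $\mu$ is $\sigma$-invariant and satisfies $h_\mu \geq \varlimsup \frac{1}{j}\log \#\GGG_j = h$ — here I would use $\#\GGG_j \geq \theta e^{jh}$ together with the fact that the $\GGG_j$ are genuine language words — so that $\mu$ is an MME. (Alternatively one can average over $\GGG_j$ directly and invoke upper semicontinuity of $h_\mu$ plus the entropy lower bound from the cardinality of $\GGG_j$.)

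For the Gibbs lower bound \eqref{eqn:Gibbs}, fix $w \in \GGG$ with $|w| = \ell$. The idea is that \ref{spec-diff} lets one insert $w$ into roughly a fixed proportion of the words counted by $\#\GGG_j$: given $w$ and any $v \in \GGG_{j-\ell-2T}$ (say), one can form a word in $\LLL_j$ (or in $\GGG$, after adjusting lengths) containing $w$ in a prescribed position via the gluing words of length $T$. Since \ref{spec-diff} is stated for four $\GGG$-words with three gaps, I would apply it to concatenations of the form $u\,p\,w\,q\,v$, producing legal words whose restriction to a fixed window equals $w$. Counting these, the number of length-$j$ words with $w$ appearing at a fixed offset is at least a constant multiple of $\#\GGG_{j-\ell-O(T)} \cdot |A|^{-O(T)}$, which by \eqref{eqn:L-bounds} and the lower bound on $\#\GGG_j$ is at least $c' e^{-\ell h}\#\GGG_j$. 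Dividing by $\#\GGG_j$ (the normalizing count in $\mu_n$) and accounting for the averaging over the $j$ shifts gives $\mu_n[w] \geq c e^{-\ell h}$ uniformly in $n$, and this passes to the limit for cylinder sets since $\mu_n[w] \to \mu[w]$ along the subsequence (cylinders being clopen). The two-cylinder bound \eqref{eqn:Gibbs-2} is obtained the same way, now using \ref{spec-diff} to glue $u$ and $v$ with a single gap of length exactly $T$ and tracking the relative offset $|u|+T$; this is exactly the case the four-word, three-gap form of \ref{spec-diff} was designed to accommodate.

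The main obstacle I expect is the bookkeeping in the counting step: one must ensure that the insertion map $(w,v) \mapsto (\text{a length-}j\text{ word containing } w)$ is injective enough that the lower bound on its image survives division by $\#\GGG_j$, and that the uniform constant $c$ does not degrade with $|w|$ — this is where the $e^{-|w|h}$ rate must come out cleanly rather than with an extra $|w|$-dependent factor. The delicate point is matching lengths: since $j$ ranges over an interval of width $2M$ and the gluing words have length $T$, the offsets and residual lengths must be reconciled, and one must be careful that the gluing words provided by \ref{spec-diff} lie in $\LLL_T$ (not necessarily $\GGG$), so the inserted copy of $w$ sits inside a legal word but the ambient word need not be in $\GGG$. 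Keeping the normalization consistent between the empirical measure $\mu_n$ and the inserted-word count, and confirming that the weak\* limit preserves the \emph{uniform} lower bound on each fixed cylinder, is the crux; once that is handled, \eqref{eqn:Gibbs} and \eqref{eqn:Gibbs-2} follow by the same template.
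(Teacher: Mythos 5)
There is a genuine gap, and it is exactly the point you flag as ``delicate'' but then defer: your empirical measures are normalized over $\GGG_{j(n)}$, so to bound $\mu_n[w]$ from below you must count words lying \emph{in} $\GGG_{j(n)}$ that contain $w$ at a prescribed offset. But the only gluing tool available under the hypotheses of the proposition is \ref{spec-diff}, which produces words in $\LLL$, not in $\GGG$: nothing in \ref{spec-diff}, \eqref{eqn:G-geq}, or \eqref{eqn:L-bounds} allows you to concatenate $\GGG$-words and land back in $\GGG$. (The property \ref{spec-same} of Theorem \ref{thm:get-spec}, which does return a word of $\GGG$, is \emph{not} among the hypotheses here, and neither is \ref{spec-same-2}.) Consequently the glued words $u\,p\,w\,q\,v$ you construct carry no $\mu_n$-mass at all, and the abstract hypotheses do not even exclude the possibility that no word of $\GGG_j$ contains $w$ as an interior subword. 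Your limit measure is indeed an MME --- the entropy argument via $\#\GGG_{j(n)} \geq \theta e^{nh}$ and the standard Misiurewicz/Walters scheme is fine --- but your counting step cannot be connected to that measure, so \eqref{eqn:Gibbs} and \eqref{eqn:Gibbs-2} are not established.

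The paper's proof avoids this mismatch by running the construction the other way around: $\nu_n$ is uniform over \emph{all} of $\LLL_n$, $\mu_n = \frac 1n \sum_{k=0}^{n-1}\sigma_*^k\nu_n$, and $\GGG$ enters only through the \emph{flanking} words. To bound $\sigma_*^k\nu_n([u]\cap\sigma^{-(|u|+T)}[v])$ one counts words $y\,w'\,q\,u\,p\,v\,r\,x'\,z \in \LLL_n$, where $w'\in\GGG_{k'}$ and $x'\in\GGG_{j'}$ are supplied in quantities $\theta e^{(k-T)h}$ and $\theta e^{(j-T)h}$ by \eqref{eqn:G-geq}, and the three gaps $q,p,r$ come from applying the four-word property \ref{spec-diff} to $(w',u,v,x')$. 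Since each glued word lies in $\LLL_n$, it automatically has $\nu_n$-mass $1/\#\LLL_n \geq C_2^{-1}e^{-nh}$ by \eqref{eqn:L-bounds}, and averaging over $k$ gives the uniform constant $c = \theta^2 e^{-3Th}C_2^{-1}$. In short: the hypotheses let one insert a prescribed pair $u,v\in\GGG$ into a large family of $\LLL$-words; they do not let one insert arbitrary words into $\GGG$-words. Your construction needs the latter, the paper's needs only the former. If you replace your $\nu_n$ (uniform on $\GGG_{j(n)}$-cylinders) by the uniform measure on $\LLL_n$-cylinders, keep your weak* limit and clopen-cylinder convergence, and invoke \eqref{eqn:G-geq} only for the flanking words, your outline turns into the paper's proof.
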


\begin{proposition}\label{prop:unique}
Let $X$ be a shift space with topological entropy $h$ whose language $\LLL$ contains a collection $\GGG$ satisfying the conclusion of Proposition \ref{prop:good-lower}, and suppose that $X$ admits an MME $\mu$ that satisfies the two-step Gibbs bound \eqref{eqn:Gibbs-2}. Then $\mu$ is the unique MME.
\end{proposition}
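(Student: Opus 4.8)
The plan is to prove the stronger statement that \emph{every} ergodic MME $\nu$ equals $\mu$; this gives uniqueness, since any MME decomposes into ergodic components, each of which is an MME (entropy is affine over the ergodic decomposition) and hence equals $\mu$. Two distinct ergodic measures are mutually singular, so it suffices to derive a contradiction from the assumption that $\nu$ is an ergodic MME with $\nu\perp\mu$. The quantitative form of singularity I would use comes from the martingale description of the Lebesgue decomposition: since $\nu\perp\mu$, the ratios $\mu[x_{[1,n]}]/\nu[x_{[1,n]}]$ tend to $0$ for $\nu$-a.e.\ $x$. Choosing a sequence $\delta_n\downarrow 0$ slowly enough, the collections $\DDD_n := \{ w\in \LLL_n : \mu[w]\le \delta_n\,\nu[w] \}$ then satisfy $\nu[\DDD_n]\ge \tfrac12$ for all large $n$, while $\mu([\DDD_n]) = \sum_{w\in\DDD_n}\mu[w] \le \delta_n\sum_{w\in\DDD_n}\nu[w]\le \delta_n\to 0$. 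Thus the $\DDD_n$ are $\nu$-substantial but asymptotically $\mu$-negligible, and it is this $\mu$-negligibility that I aim to contradict.

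I would then feed these $\DDD_n$ into Proposition~\ref{prop:good-lower}: since $\nu[\DDD_n]\ge\tfrac12$, there are $i,k\in\{0,\dots,M\}$ with $\#\big(\ph_{i,k}(\DDD_n)\cap\GGG\big)\ge \theta e^{nh}$. Write $\GGG^*_n := \ph_{i,k}(\DDD_n)\cap\GGG$; these are words of length $m=n-(i+k)\ge n-2M$ lying in $\GGG$, each obtained by deleting at most $M$ symbols from each end of some parent word in $\DDD_n$. Note that the two-step bound \eqref{eqn:Gibbs-2} yields in particular a one-step lower bound: fixing any single good word $v_0\in\GGG$ to play the role of the second block, $\mu[v]\ge \mu([v]\cap\sigma^{-(|v|+T)}[v_0])\ge c\,e^{-|v_0|h}e^{-|v|h}=c'e^{-|v|h}$ for every $v\in\GGG$. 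Consequently each core is $\mu$-heavy, and since distinct cores give disjoint cylinders, $\mu\big([\GGG^*_n]\big)\ge c'\theta e^{-2Mh}=:c''>0$ uniformly in $n$.

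The heart of the argument — and the step I expect to be the main obstacle — is to convert this combinatorial abundance of $\mu$-heavy good cores into a genuine contradiction with $\mu([\DDD_n])\to 0$. The difficulty is that a core $v\in\GGG^*_n$ is embedded in its parent word $w\in\DDD_n$ only after discarding up to $M$ uncontrolled symbols at each end, and the one-step bound governs $\mu[v]$ but not the $\mu$-measure of the specific length-$n$ cylinder $[w]$; the surplus $\mu$-mass of $[v]$ can escape onto extensions of $v$ outside $\DDD_n$, so a mere count of good cores does not by itself contradict singularity. This is exactly where the two-step bound \eqref{eqn:Gibbs-2} is indispensable: using the transition words of \ref{spec-diff} to bridge gaps of length $T$, it lower-bounds $\mu([u]\cap\sigma^{-(|u|+T)}[v])$ for \emph{two} consecutive good blocks, and hence controls cylinders assembled from successive good cores rather than from isolated ones. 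The plan is to exploit the ergodic theorem along a $\nu$-typical orbit to locate two good cores with matching spacing, re-synchronize them by \eqref{eqn:Gibbs-2}, and thereby produce, on a $\nu$-positive set of scales $N$, a uniform estimate $\mu[x_{[1,N]}]\ge C^{-1}e^{-Nh}$; combined with the Shannon--McMillan--Breiman bound $\nu[x_{[1,N]}]\le e^{-N(h-\epsilon)}$ for $\nu$-a.e.\ $x$, this forces $\mu[x_{[1,N]}]/\nu[x_{[1,N]}]\not\to 0$, contradicting $\nu\perp\mu$. The delicate point throughout is bookkeeping of the bounded but unknown affixes, so that the gluing supplied by \eqref{eqn:Gibbs-2} genuinely lands on the cylinders carrying the $\nu$-mass and not on the escaping extensions; making this synchronization precise is the crux of the proof.
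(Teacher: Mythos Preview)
Your reduction has a gap: to deduce $\nu\perp\mu$ from $\nu\neq\mu$ with $\nu$ ergodic, you are implicitly assuming that $\mu$ itself is ergodic, which is not given and is in fact part of what must be proved. If $\mu$ were non-ergodic, each of its ergodic components would be an ergodic MME distinct from $\mu$ yet absolutely continuous with respect to $\mu$, and your argument would never touch it. The paper therefore splits the proof into two pieces: first ruling out any MME $\nu\perp\mu$ (Proposition~\ref{prop:no-perp}, using only the one-step Gibbs bound \eqref{eqn:Gibbs}), and then separately proving that $\mu$ is ergodic (Proposition~\ref{prop:ergodic}) --- it is this second step, not the first, where the two-step bound \eqref{eqn:Gibbs-2} is actually needed.

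The more serious gap is the ``crux'' you correctly identify but do not resolve. Your martingale sets $\DDD_n$ satisfy $\mu[\DDD_n]\to 0$, but after truncation the cores lie in shorter cylinders $[\ph_{i,k}\DDD_n]\supset[\DDD_n]$, and there is no reason $\mu[\ph_{i,k}\DDD_n]$ should be small; indeed you show it is at least $c''$. Your proposed workaround does not close this: gluing two good cores via \eqref{eqn:Gibbs-2} produces a word that need not agree with $x_{[1,N]}$ on the affix positions, so it does not lower-bound $\mu[x_{[1,N]}]$; and even granting $\mu[x_{[1,N]}]\ge C^{-1}e^{-Nh}$, combining with the SMB bound $\nu[x_{[1,N]}]\le e^{-N(h-\epsilon)}$ yields only $\mu/\nu\ge C^{-1}e^{-N\epsilon}\to 0$, which is no contradiction. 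The paper's fix (Lemma~\ref{lem:approx}, following \cite{PYY22}) is to replace your $\DDD_n$ with collections $\UUU_n$ consisting of those $w\in\LLL_n$ whose \emph{centered} cylinder $\sigma^{\lfloor n/2\rfloor}[w]$ meets a compact $K_1$ carrying most of $\nu_1$ and separated from a compact $K_2$ carrying most of $\nu_2$. Since truncating a bounded number of symbols from each end leaves the central block intact, $[\ph_{i,j}\UUU_n]$ still misses $K_2$ after an appropriate shift, giving $\nu_2[\ph_{i,j}\UUU_n]\le\zeta$ directly. With this truncation-stable smallness in hand, the contradiction with the Gibbs lower bound on the cores is immediate, and no orbitwise synchronization argument is needed.
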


\subsection{Uniform counting bounds}\label{sec:counting}

In this section, we prove Proposition \ref{prop:good-lower}. 

\begin{proposition}\label{prop:unif-Katok}
Let $X$ be a shift space with topological entropy $h$ whose language $\LLL$ satisfies \eqref{eqn:L-bounds}.
Then for all $\gamma \in (0,1)$, there exists $\alpha>0$ such that for every $\DDD_n \subset \LLL_n$ satisfying $\nu([\DDD_n]) \geq \gamma$ for some MME $\nu$, we have $\#\DDD_n \geq \alpha e^{nh}$.
\end{proposition}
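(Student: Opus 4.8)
The plan is to exploit the fact that a measure of maximal entropy nearly equidistributes its mass across cylinders at the level of Shannon entropy, so that any collection of words carrying a definite fraction of the mass cannot be too small. First I would record a lower bound on the finite-level entropies of $\nu$. Since $\nu$ is an MME, the variational principle \eqref{eqn:vp} gives $h_\nu = h$, and the subadditivity $H_{n+k}(\nu) \le H_n(\nu) + H_k(\nu)$ noted after \eqref{eqn:hmu} yields, via Fekete's lemma, $h = h_\nu = \inf_n \frac1n H_n(\nu)$; in particular $H_n(\nu) \ge nh$ for every $n$. No ergodicity of $\nu$ is needed here, since $H_n(\nu) \ge n h_\nu$ holds for every invariant measure.

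Next I would apply the grouping property of Shannon entropy to the cylinder partition $\{[w] : w \in \LLL_n\}$. Writing $q := \nu([\DDD_n]) \ge \gamma$ and splitting the words into the two blocks $\DDD_n$ and $\LLL_n \setminus \DDD_n$, the grouping identity reads
\[
H_n(\nu) = H(q,1-q) + q\, H_{\mathrm{in}} + (1-q)\, H_{\mathrm{out}},
\]
where $H_{\mathrm{in}}$ and $H_{\mathrm{out}}$ denote the entropies of the conditional distributions $\{p_w/q\}_{w\in\DDD_n}$ and $\{p_w/(1-q)\}_{w\notin\DDD_n}$ with $p_w := \nu([w])$. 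Each conditional entropy is at most the logarithm of the number of its atoms, so $H_{\mathrm{in}} \le \log \#\DDD_n$ and, crucially, $H_{\mathrm{out}} \le \log \#\LLL_n \le nh + \log C_2$ by the upper bound in \eqref{eqn:L-bounds}; moreover $H(q,1-q) \le \log 2$.

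Combining these with $H_n(\nu) \ge nh$ gives
\[
nh \le \log 2 + q\log \#\DDD_n + (1-q)(nh + \log C_2).
\]
The term $(1-q)nh$ on the right cancels part of the $nh$ on the left, leaving $q\,nh$; dividing by $q$ and using $q \ge \gamma$, $1-q \le 1$, and $C_2 \ge 1$ (so $\log C_2 \ge 0$), I obtain
\[
\log \#\DDD_n \ge nh - \frac{\log 2 + \log C_2}{\gamma},
\]
whence the conclusion holds with $\alpha = (2C_2)^{-1/\gamma}$, depending only on $\gamma$ and $C_2$. The step carrying the weight of the argument is precisely this simultaneous use of the two sides of \eqref{eqn:L-bounds}: the lower bound $H_n(\nu) \ge nh$ reflects that $\nu$ has full entropy, while the bound $H_{\mathrm{out}} \le nh + \log C_2$ on the complementary block uses the uniform upper count $\#\LLL_n \le C_2 e^{nh}$. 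Neither bound alone suffices — it is the cancellation they produce that isolates the factor $e^{nh}$.
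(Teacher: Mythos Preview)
Your proof is correct and follows essentially the same route as the paper: both combine the inequality $H_n(\nu)\ge nh$ (for any MME $\nu$) with the grouping identity for Shannon entropy and the upper bound $\#\LLL_n \le C_2 e^{nh}$ from \eqref{eqn:L-bounds}, arriving at the identical constant $\alpha=(2C_2)^{-1/\gamma}$. The only differences are cosmetic rearrangements of the algebra (the paper first isolates $\log(\#\DDD_n/\#\LLL_n)$ and then applies the lower half of \eqref{eqn:L-bounds}, whereas you substitute $\log\#\LLL_n \le nh+\log C_2$ directly).
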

\begin{proof}
We use the following general facts about probability vectors \cite[Corollary 4.2.1 and Theorem 4.3(ii)]{pW82}: first, if $\mathbf{p} := (p_1,\dots, p_k)$ is a probability vector, then
\begin{equation}\label{eqn:h-leq}
H(\mathbf{p}) := \sum_{i=1}^k -p_i \log p_i \leq \log k;
\end{equation} 
and second, if given $I \subset \{1,\dots, k\}$ we write $p_I := \sum_{i\in I} p_i$ and $\mathbf{p}_I = (p_i / p_I)_{i\in I}$ for the corresponding probability vector of conditional probabilities, then we have
\begin{equation}\label{eqn:split-h}
H(\mathbf{p}) = H(p_I, p_{I^c})
+ p_I H(\mathbf{p}_I) + p_{I^c} H(\mathbf{p}_{I^c}),
\end{equation}
where $I^c = \{1,\dots, k\} \setminus I$.
In the setting of the proposition, the partition into $n$-cylinders is generating for $\sigma^n$, so we have
\begin{align*}
nh &= nh_\nu(\sigma) = h_\nu(\sigma^n)
\leq \sum_{w\in \LLL_n} -\nu[w] \log\nu[w] \\
&\leq \log 2 + \nu[\DDD_n] \sum_{w\in \DDD_n} -\frac{\nu[w]}{\nu[\DDD_n]} \log \frac{\nu[w]}{\nu[\DDD_n]}
+ \nu[\DDD_n^c] \sum_{w\in \DDD_n^c} -\frac{\nu[w]}{\nu[\DDD_n^c]} \log \frac{\nu[w]}{\nu[\DDD_n^c]},
\end{align*}
where the last inequality uses \eqref{eqn:split-h} and \eqref{eqn:h-leq} with $k=2$. Applying \eqref{eqn:h-leq} to the two sums and using the fact that $\#\DDD_n^c \leq \#\LLL_n$, we get
\begin{align*}
nh &\leq \log 2 + \nu[\DDD_n] \log \#\DDD_n + (1-\nu[\DDD_n]) \log \#\LLL_n \\
&\leq \log 2 + \nu[\DDD_n] \log \#\DDD_n + nh + \log C_2 - \nu[\DDD_n] \log \#\LLL_n,
\end{align*}
where the last inequality uses \eqref{eqn:L-bounds}.
This implies that
\[
-\log 2 - \log C_2 \leq \nu[\DDD_n] \log (\#\DDD_n / \#\LLL_n)
\leq \gamma \log (\#\DDD_n / \#\LLL_n),
\]
since $\nu[\DDD_n] \geq \gamma$ and the logarithm is nonpositive. We obtain
\[
(\#\DDD_n/\#\LLL_n)^\gamma \geq (2C_2)^{-1}
\quad\Rightarrow\quad
\#\DDD_n \geq (2C_2)^{-1/\gamma} \#\LLL_n,
\]
and since $\#\LLL_n \geq e^{nh}$ by \eqref{eqn:L-bounds}, this proves Proposition \ref{prop:unif-Katok} with $\alpha = (2C_2)^{-1/\gamma}$.
\end{proof}

Taking $\alpha$ corresponding to $\gamma = \frac 12$ in Proposition \ref{prop:unif-Katok}, the proof of Proposition \ref{prop:good-lower} is completed by the following. 

\begin{lemma}\label{lem:alpha-theta}
Let $X$ be a shift space with topological entropy $h$ whose language $\LLL$ contains collections $\Cp,\GGG,\Cs$ satisfying \ref{decomp} and \ref{PR}, and for which there exists $C_1>0$ such that $\#\GGG_j \leq C_1 e^{jh}$ for all $j\in\NN$.

Then for every $\alpha \in (0,1)$, there exist $M\in \NN$ and $\theta\in (0,1)$ such that given any $n> 2M$ and $\DDD_n \subset \LLL_n$ satisfying $\#\DDD_n \geq \alpha e^{nh}$, there exist $i,k \in \{0,1,\dots, M\}$ such that
\eqref{eqn:phi-geq} holds: $\#(\ph_{i,k}(\DDD_n) \cap \GGG) \geq \theta e^{nh}$.
\end{lemma}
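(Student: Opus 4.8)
The plan is to use \ref{decomp} to attach to each $w\in\DDD_n$ a ``core'' lying in $\GGG$, obtained by deleting a prefix from $\Cp$ and a suffix from $\Cs$, and then to exploit \ref{PR} to show that the overwhelming majority of these prefixes and suffixes are short. Concretely, fix for each $w\in\DDD_n$ a decomposition $w=u^p v u^s$ with $u^p\in\Cp$, $v\in\GGG$, $u^s\in\Cs$ (for definiteness, the canonical one from the proof of Theorem \ref{thm:get-spec}), and set $i(w)=|u^p|$ and $k(w)=|u^s|$, so that the core is exactly $v=\ph_{i(w),k(w)}(w)\in\GGG$. Write $S^p:=\sum_i\#\Cp_i e^{-ih}$ and $S^s:=\sum_k\#\Cs_k e^{-kh}$, both finite with $Q=S^pS^s$ by \ref{PR}. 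Grouping $\DDD_n$ by the pair $(i(w),k(w))$, a word with $i(w)=i$ and $k(w)=k$ is determined by the triple $(u^p,v,u^s)\in\Cp_i\times\GGG_{n-i-k}\times\Cs_k$, so using $\#\GGG_{n-i-k}\leq C_1 e^{(n-i-k)h}$ the number of $w\in\DDD_n$ with $i(w)=i$ is at most
\[
C_1 e^{nh}\,(\#\Cp_i e^{-ih})\sum_{k}\#\Cs_k e^{-kh}=C_1 S^s e^{nh}\,(\#\Cp_i e^{-ih}),
\]
and symmetrically for $k(w)=k$.

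Next I would choose $M=M(\alpha)$ using the convergence in \ref{PR}: the tails $\sum_{i>M}\#\Cp_i e^{-ih}$ and $\sum_{k>M}\#\Cs_k e^{-kh}$ tend to $0$, so $M$ can be taken so large that each of the two corresponding word-counts is at most $\tfrac{\alpha}{4}e^{nh}$. Since $\#\DDD_n\geq\alpha e^{nh}$, it follows that at least $\tfrac{\alpha}{2}e^{nh}$ words $w\in\DDD_n$ satisfy $i(w),k(w)\in\{0,1,\dots,M\}$; here the hypothesis $n>2M$ guarantees $i(w)+k(w)<n$, so these cores are genuine words and each $\ph_{i(w),k(w)}$ is defined. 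A pigeonhole over the $(M+1)^2$ possible pairs then yields a single pair $(i,k)$ with $i,k\leq M$ for which at least $\tfrac{\alpha}{2(M+1)^2}e^{nh}$ words $w\in\DDD_n$ have $i(w)=i$ and $k(w)=k$.

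The main obstacle is that $\ph_{i,k}$ is \emph{not} injective on this set: two words sharing the same core $v$ but with different prefix or suffix are distinct, so a large count of words does not directly give a large count of distinct cores, which is what \eqref{eqn:phi-geq} requires. To handle this I would bound the fibers. Every such $w$ has the form $u^p v u^s$ with $(u^p,u^s)\in\Cp_i\times\Cs_k$, so each core in $\GGG$ is the image of at most $\#\Cp_i\cdot\#\Cs_k$ of these words; and since $i,k\leq M$, the inequalities $\#\Cp_i\leq S^p e^{Mh}$ and $\#\Cs_k\leq S^s e^{Mh}$ (immediate from the definitions of $S^p,S^s$) bound each fiber by $Q e^{2Mh}$. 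Dividing the pigeonhole count by this fiber bound gives
\[
\#\big(\ph_{i,k}(\DDD_n)\cap\GGG\big)\geq\frac{\alpha}{2(M+1)^2 Q e^{2Mh}}\,e^{nh},
\]
so \eqref{eqn:phi-geq} holds with $\theta:=\frac{\alpha}{2(M+1)^2 Q e^{2Mh}}$ (shrinking $\theta$ into $(0,1)$ if necessary). Since $M$ and the fiber bound depend only on $\alpha$, $C_1$, and the convergent sums in \ref{PR}—not on $n$ or $\DDD_n$—this is the required uniform estimate.
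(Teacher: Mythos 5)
Your proof is correct and follows essentially the same route as the paper's: both use the decomposition \ref{decomp} to relate $\DDD_n$ to the truncated collections $\ph_{i,k}(\DDD_n)\cap\GGG$ with multiplicity $\#\Cp_i\,\#\Cs_k$, use the bound $\#\GGG_j\leq C_1e^{jh}$ together with the convergence in \ref{PR} to make the contribution of pairs with $\max(i,k)>M$ negligible, and then extract one good pair $(i,k)\in\{0,\dots,M\}^2$. The paper merely streamlines your partition--pigeonhole--fiber-bound bookkeeping into a single covering inequality that it solves for the maximum $r_n(M)=\max_{0\leq i,k\leq M}\#(\ph_{i,k}(\DDD_n)\cap\GGG)$, so the difference is organizational rather than substantive.
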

\begin{proof}
Given $M\in \NN$, let $\mathbb{I}(M) := \{ (i,k) \in (\NN \cup \{0\})^2 : \max(i,k) > M\}$, and 
observe that by \ref{PR}, we have $\delta(M) := \sum_{(i,k) \in \mathbb{I}(M)} (\#\CCC_i^p e^{-ih})
(\#\Cs_k e^{-kh}) \to 0$ as $M \to\infty$.
Also, consider for each $n>2M$ the quantity
\begin{equation}\label{eqn:rnM}
r_n(M) := \max \{ \#(\ph_{i,k}(\DDD_n) \cap \GGG) : 0\leq i,k\leq M \}.
\end{equation}
We must prove that there exists $M\in \NN$ and $\theta>0$ such that $r_n(M) \geq \theta e^{nh}$ for all $n$.
As in the proof of Lemma \ref{lem:L-leq}, we use \ref{decomp} to get
\[
\DDD_n \subset \bigcup_{i=0}^n \bigcup_{k=0}^{n-i}
\Cp_i \big( \ph_{i,k}(\DDD_n) \cap \GGG\big) \Cs_k.
\]
Writing $\mathbb{I}_n(M) := \{ (i,k) \in \mathbb{I}(M) : i+k \leq n\}$
and using the upper bound on $\#\GGG_j$,
we obtain for each $M\in \NN$ the inequality
\begin{align*}
\#\DDD_n &\leq \sum_{i=0}^M \sum_{k=0}^M (\#\Cp_i) \#(\ph_{i,k}(\DDD_n) \cap \GGG)(\#\Cs_k)
+ \sum_{(i,k) \in \mathbb{I}_n(M)} (\#\Cp_i)(\#\GGG_{n-(i+k)})(\#\Cs_k) \\
&\leq \sum_{i=0}^M \sum_{k=0}^M (\#\Cp_i)(\#\Cs_k) r_n(M)
+ \sum_{(i,k) \in \mathbb{I}_n(M)} (\#\Cp_i) C_1 e^{nh} e^{-ih} e^{-kh} (\#\Cs_k).
\end{align*}
Writing $S(M) = (\sum_{i=0}^M \#\Cp_i)(\sum_{k=0}^M \#\Cs_k)$, we use the inequality $\#\DDD_n \geq \alpha e^{nh}$ to get
\[
\alpha e^{nh} \leq  S(M) r_n(M) + \delta(M) C_1 e^{nh}.
\]
Since $\delta(M) \to 0$ as $M\to\infty$, we can take $M$ sufficiently large that $\delta(M) C_1 < \alpha$, and obtain
\[
r_n(M) \geq S(M)^{-1}(\alpha - \delta(M) C_1) e^{nh},
\]
which proves the lemma.
\end{proof}

\subsection{A Gibbs-type measure}\label{sec:Gibbs}

In this section, we prove Proposition \ref{prop:Gibbs} by constructing an MME that has the Gibbs-type properties \eqref{eqn:Gibbs} and \eqref{eqn:Gibbs-2}. Note that the first of these is a special case of the second, by taking $v$ to be the empty word, so we will only give the proof for \eqref{eqn:Gibbs-2}.

The construction of $\mu$, as well as the proof that it is invariant and is an MME, follows the usual argument from Misiurewicz's proof of the variational principle:
for each $n\in \NN$, let $\nu_n$ be a Borel probability measure on $X$ with the property that $\nu_n[w] = 1/\#\LLL_n$ for every $w\in \LLL_n$; then let $\mu_n := \frac 1n \sum_{k=0}^{n-1} \sigma_*^k \nu_n$, and let $\mu$ be any weak*-limit point of the sequence $\mu_n$. See \cite[Theorem 8.6]{pW82} for the argument that $\mu$ is invariant and is an MME.

It remains to prove \eqref{eqn:Gibbs-2}. The following argument can be extracted from \cite[Lemma 5.15]{CT12}, but since the hypotheses are stated differently here, we give the details. Figure \ref{fig:Gibbs} illustrates the bookkeping involved in the following computations by showing which indices are included in which words.

\begin{figure}[htbp]
\begin{tikzpicture}
\draw (0,1.5)--(0,-.3) node[below]{$0$};
\draw (14,1.5)--(14,-.3) node[below]{$n$};
\draw[fill=red!25!white] (4,0) rectangle (6,1.2);
\node at (5,0.6) {$u$};
\draw[fill=blue!25!white] (7,0) rectangle (10,1.2);
\node at (8.5,0.6) {$v$};
\draw[fill=yellow!25!white] (0.6,0) rectangle (3,0.8);
\node at (1.8,0.4) {$w'$};
\draw[fill=yellow!25!white] (11,0) rectangle (13.3,0.8);
\node at (12.15,0.4) {$x'$};
\draw[fill=black!20!white] (0,0) rectangle (0.6,0.4);
\node at (0.3,0.2){$y$};
\draw[fill=black!20!white] (3,0) rectangle (4,0.4);
\node at (3.5,0.2){$q$};
\draw[fill=black!20!white] (6,0) rectangle (7,0.4);
\node at (6.5,0.2){$p$};
\draw[fill=black!20!white] (10,0) rectangle (11,0.4);
\node at (10.5,0.2){$r$};
\draw[fill=black!20!white] (13.3,0) rectangle (14,0.4);
\node at (13.65,0.2){$z$};
\draw[fill=green!25!white] (0,0.8) rectangle (4,1.2);
\node at (2,1){$w$};
\draw[fill=green!25!white] (10,0.8) rectangle (14,1.2);
\node at (12,1){$x$};
\draw (4,1.5)--(4,-0.3);
\draw (6,1.5)--(6,-0.3);
\draw (7,1.5)--(7,-0.3);
\draw (10,1.5)--(10,-0.3);
\draw[<->,red] (.1,1.4) -- (3.9,1.4);
\draw[red] (2,1.4) node[above] {$k$};
\draw[<->,blue] (6.1,1.4) -- (6.9,1.4);
\draw[blue] (6.5,1.4) node[above] {$T$};
\draw[<->,red] (10.1,1.4) -- (13.9,1.4);
\draw[red] (12,1.4) node[above] {$j$};
\draw[<->,red] (0.7, -0.2) -- (2.9, -0.2);
\draw[red] (1.8,-0.2) node[below]{$k'$};
\draw[<->,red] (11.1, -0.2) -- (13.2, -0.2);
\draw[red] (12.15,-0.2) node[below]{$j'$};
\draw[<->,blue] (3.1, -0.2) -- (3.9, -0.2);
\draw[blue] (3.5,-0.2) node[below]{$T$};
\draw[<->,blue] (10.1, -0.2) -- (10.9, -0.2);
\draw[blue] (10.5,-0.2) node[below]{$T$};
\end{tikzpicture}
\caption{Getting the Gibbs bound from property \ref{spec-diff}.}
\label{fig:Gibbs}
\end{figure}
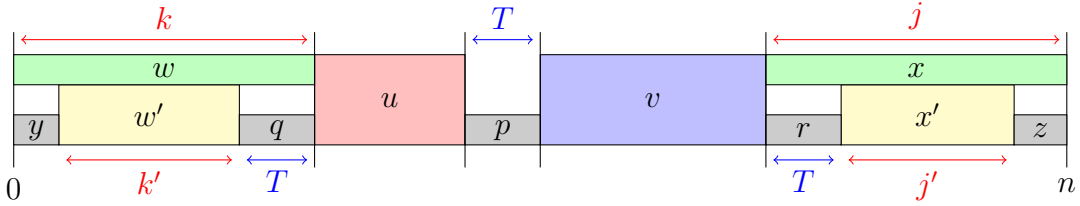

Observe that given $u,v\in \GGG$ and $n\in \NN$, for each $0 \leq k < n-|u|-T-|v|$, we can write $j = n - k - |u| - T - |v|$ and obtain
\begin{equation}\label{eqn:sigma*geq}
\sigma_*^k \nu_n ([u] \cap \sigma^{-(|u|+T)}[v])
= \frac{ \#\{ (w,p,x) \in \LLL_k \times \LLL_T\times \LLL_j : wupvx \in \LLL \}}
{\#\LLL_n}.
\end{equation}
If $k,j\geq 2M$, then \eqref{eqn:G-geq} gives  $k' \in [k-T -2M, k-T]$ and $j' \in [j-T-2M, j-T]$ such that
\begin{equation}\label{eqn:Gj'}
\#\GGG_{k'} \geq \theta e^{(k-T)h}
\quad\text{and}\quad
\#\GGG_{j'} \geq \theta e^{(j-T)h}
\end{equation}
For every $w'\in \GGG_{k'}$ and $x' \in \GGG_{j'}$, \ref{spec-diff} gives $q,p,r \in \LLL_T$ such that $w'qupvrx' \in \LLL$. Thus there exist $y,z\in \LLL$ such that
\[
w := yw'q \in \LLL_k,\quad
x := rx'z \in \LLL_j,\quad\text{and}\quad
wupvx = yw'qupvrx'z \in \LLL_n.
\]
Together with \eqref{eqn:sigma*geq}, \eqref{eqn:Gj'}, and the bound $\#\LLL_n \leq C_2 e^{nh}$ from \eqref{eqn:L-bounds}, this gives
\[
\sigma_*^k \nu_n ([u] \cap \sigma^{-(|u|+T)}[v])
\geq \theta^2 e^{-2Th} e^{jh+kh} C_2^{-1} e^{-nh}
= c e^{-(|u|+|v|)h},
\]
where $c := \theta^2 e^{-3Th} C_2^{-1}$.
This bound holds for all $2M \leq k < n - |u| - T - |v| - 2M$, so upon averaging over all $0\leq k < n$, we get
\[
\mu_n([u] \cap \sigma^{-(|u|+T)}[v])
\geq \frac{n-|u|-T-|v|-4M}{n} \cdot c e^{-(|u|+|v|)h}.
\]
Sending $n\to\infty$ along an appropriate subsequence proves \eqref{eqn:Gibbs-2}.

\subsection{Uniqueness}\label{sec:uniqueness}

In this section, we prove Proposition \ref{prop:unique}. 
This is where the new ideas from \cite{PYY22} become essential; the arguments in \S\ref{sec:counting} and \S\ref{sec:Gibbs} closely mirror the ones in \cite{CT12} (although the map $\ph_{i,k}$ and the estimate \eqref{eqn:phi-geq} did not appear there), 
but to prove that $\mu$ is ergodic and that there is no other MME, the arguments in \cite{CT12} required an extra condition. Thus we now follow \cite{PYY22}, although with substantial simplifications, since that paper is written for flows on compact metric spaces and deals with equilibrium states for a broader class of potential functions.

The key innovation leading to the improvement over \cite{CT12} is the next lemma, which follows \cite[Lemma 7.3]{PYY22}. 
This lemma holds in an arbitrary shift space with no further assumptions.

\begin{lemma}\label{lem:approx}
Given any mutually singular $\nu_1,\nu_2 \in \MMM_\sigma(X)$, and any $\zeta>0$, there is a collection of words $\UUU \subset \LLL$ and $n_0\in \NN$ such that
\begin{enumerate}
\item $\nu_1([\UUU_n]) \geq 1-\zeta$ for all $n$, and
\item $\nu_2([\ph_{i,j} \UUU_n]) \leq \zeta$ for all $i,j,n$ satisfying $n \geq 2(n_0 + \max(i,j))$.
\end{enumerate}
\end{lemma}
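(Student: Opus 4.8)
The plan is to exploit the mutual singularity of $\nu_1$ and $\nu_2$ to find a set on which the two measures behave very differently, and then to promote this into a statement about cylinders via a Birkhoff/martingale argument. Concretely, since $\nu_1 \perp \nu_2$, there is a Borel set $E\subset X$ with $\nu_1(E) = 1$ and $\nu_2(E) = 0$. By regularity of Borel measures on the compact metrizable space $X$, I would approximate $E$ from outside by an open set and from inside by a compact set to get a clopen (hence finite union of cylinders) set $V$ with $\nu_1(V)$ close to $1$ and $\nu_2(V)$ close to $0$; after fixing a length $n_1$ so that $V$ is a union of $n_1$-cylinders, this gives a first foothold. The subtlety is that a single such $V$ controls $\nu_2$ only on the cylinders at one scale and at one position, whereas the conclusion must control $\nu_2([\ph_{i,j}\UUU_n])$ \emph{uniformly} over all truncation parameters $i,j$ and all large $n$.

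To get this uniformity, the natural tool is the ergodic theorem applied to the indicator $\mathbf{1}_V$. First I would reduce to the ergodic case: decompose each $\nu_i$ into ergodic components and note that mutual singularity is inherited on a positive-measure set of components, so it suffices to treat $\nu_1,\nu_2$ ergodic (or alternatively work directly with the frequencies guaranteed by Birkhoff). For ergodic $\nu_1$, Birkhoff's theorem says that for $\nu_1$-a.e.\ $x$ the frequency $\frac1m\sum_{\ell=0}^{m-1}\mathbf{1}_V(\sigma^\ell x)$ converges to $\nu_1(V)\approx 1$, and likewise the $\nu_2$-frequency converges to $\nu_2(V)\approx 0$. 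I would then \emph{define} $\UUU$ to be the set of words $w$ (of any length) whose corresponding cylinder points have a high proportion of coordinate-windows lying in $V$ — that is, words in which the pattern defining $V$ occurs with frequency close to $\nu_1(V)$. By Egorov/Birkhoff, for all large $n$ the collection $\UUU_n$ of such length-$n$ words carries $\nu_1$-mass at least $1-\zeta$, giving conclusion~(1).

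For conclusion~(2) the point is that truncating by $\ph_{i,j}$ only removes $i+j$ symbols from the ends, and as long as $n\geq 2(n_0+\max(i,j))$ the truncated word still contains a definite proportion — at least half — of its coordinate windows, so a word in $\ph_{i,j}\UUU_n$ still exhibits the $V$-pattern with frequency bounded below by something like $\nu_1(V)-\epsilon$, which is much larger than $\nu_2(V)$. Thus any point in $[\ph_{i,j}\UUU_n]$ has a window-frequency of $V$-occurrences that is incompatible (for large $n$) with the $\nu_2$-generic frequency $\nu_2(V)$; a Markov/Chebyshev estimate on $\nu_2$ of the set of points with anomalously high $V$-frequency then forces $\nu_2([\ph_{i,j}\UUU_n])\leq\zeta$ once $n_0$ is chosen large enough. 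The threshold $n_0$ is exactly the Birkhoff-convergence scale at which both frequencies have stabilized to within the gap $\nu_1(V)-\nu_2(V)$, and the factor of $2$ in $n\geq 2(n_0+\max(i,j))$ is what guarantees the surviving central block still has length at least $n_0$ after truncation.

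The main obstacle I anticipate is making the ``window-frequency'' bookkeeping precise and genuinely uniform in $(i,j)$: I must count occurrences of the $V$-defining pattern as a function of \emph{position} inside the word, show that deleting the outer $i+j\leq \tfrac{n}{2}-n_0$ symbols cannot destroy more than a controlled fraction of those occurrences, and convert the resulting lower bound on pattern-frequency into an upper bound on $\nu_2$-measure via the ergodic theorem for $\nu_2$ — all with a single $n_0$ working for every admissible $(i,j,n)$. Getting the quantifiers in the right order (choose $V$ first, then $n_0$ from the slower of the two Birkhoff convergences, then let $n$ and $(i,j)$ range) is where the care is needed; the underlying probabilistic content is routine once the frequencies are set up correctly.
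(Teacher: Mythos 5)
There is a genuine gap, and it sits exactly where you flagged the anticipated obstacle: the truncation bookkeeping. The hypothesis in conclusion (2) is $n \geq 2(n_0 + \max(i,j))$, which bounds $\max(i,j)$ by $n/2 - n_0$, \emph{not} $i+j$. So the admissible truncations include $i = j = \lfloor n/2\rfloor - n_0$, in which case $\ph_{i,j}(w)$ is just a block of length about $2n_0$ around the center of $w$ -- a proportion $2n_0/n$ of the word, which tends to $0$ as $n\to\infty$. Your claim that the truncated word ``still contains at least half of its coordinate windows'' is therefore false; you have implicitly replaced $\max(i,j)$ by $i+j$ in the constraint. And a global frequency bound cannot be localized: if $w$ has $V$-frequency at least $1-\delta$, the roughly $\delta n$ bad window positions can all be concentrated in the central block of length $2n_0$ (since $\delta n \gg 2n_0$ for large $n$), so $\ph_{i,j}(w)$ may contain no occurrence of $V$ at all, and your Chebyshev/Birkhoff estimate for $\nu_2$ then gives no bound on $\nu_2([\ph_{i,j}\UUU_n])$. (A secondary, more repairable, issue: the reduction to ergodic $\nu_1,\nu_2$ is not clean, since the lemma concerns arbitrary invariant measures and the single collection $\UUU$ must capture $\nu_1$-mass $1-\zeta$ of the \emph{whole} measure; one should instead apply Birkhoff with conditional expectations over the invariant $\sigma$-algebra together with a Markov inequality.)

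The missing idea is that the condition defining $\UUU_n$ must be localized at the \emph{center} of the word, since that is the only part guaranteed to survive every admissible truncation. This is what the paper does, and it makes the ergodic theorem unnecessary: take disjoint Borel sets $P_1,P_2$ carrying $\nu_1,\nu_2$, compact sets $K_\ell \subset P_\ell$ with $\nu_\ell(K_\ell) \geq 1-\zeta$, and use the positive distance between $K_1$ and $K_2$ to find $n_0$ such that no centered cylinder $\sigma^{n_0}[u]$ with $u\in\LLL_{2n_0+1}$ meets both. Then define $\UUU_n := \{ w\in\LLL_n : [w] \cap \sigma^{-\lfloor n/2\rfloor} K_1 \neq \emptyset\}$. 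Conclusion (1) is immediate from invariance of $\nu_1$, since $[\UUU_n] \supset \sigma^{-\lfloor n/2\rfloor}K_1$. For (2), writing $k=\lfloor n/2\rfloor$, whenever $\max(i,j)\leq k-n_0$ the central $(2n_0+1)$-window of $w$ survives inside $\ph_{i,j}(w)$, so $\sigma^{k-i}[\ph_{i,j}(w)]$ is contained in a centered cylinder that meets $K_1$ and hence misses $K_2$; invariance of $\nu_2$ then gives $\nu_2([\ph_{i,j}\UUU_n]) = \nu_2(\sigma^{k-i}[\ph_{i,j}\UUU_n]) \leq \nu_2(X\setminus K_2)\leq\zeta$. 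Your Birkhoff approach could be salvaged by imposing the high-frequency requirement on the central window of length $2n_0$ rather than on all of $w$, using invariance to recenter -- but once the condition is localized at the center, the compactness argument above is simpler and already does everything required.
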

\begin{proof}
Since $\nu_1 \perp \nu_2$, there are disjoint Borel sets $P_1,P_2 \subset X$ such that $\nu_\ell(P_\ell)=1$ for $\ell=1,2$. Fix compact sets $K_\ell \subset P_\ell$ such that $\nu_\ell(K_\ell) \geq 1-\zeta$. The distance between $K_1$ and $K_2$ is positive, so there exists $n_0\in \NN$ such that given any $u\in \LLL_{2n_0+1}$, the centered cylinder $\sigma^{n_0} [u]$ intersects at most one of $K_1$ and $K_2$.

\begin{figure}[htbp]
\begin{tikzpicture}
\draw[fill=blue!25!white] (0,0.8) rectangle (10,1.2);
\node at (5.5,1){$w$};
\draw[fill=green!25!white] (1.5,0.4) rectangle (9,0.8);
\node at (5.5,0.6){$v$};
\draw[fill=yellow!25!white] (3,0) rectangle (7,0.4);
\node at (5.5,0.2){$u$};
\draw (0,1.5)--(0,-.2) node[below]{$0$};
\draw (10,1.5)--(10,-.2) node[below]{$n$};
\draw (5,1.5)--(5,-.2) node[below]{$k$};
\draw[<->,red] (0.1, 0.2) -- (1.4, 0.2);
\draw[red] (0.75,0.2) node[below]{$i$};
\draw[<->,red] (9.1, 0.2) -- (9.9, 0.2);
\draw[red] (9.5,0.2) node[below]{$j$};
\draw[<->,red] (3.1, -0.2) -- (4.9, -0.2);
\draw[red] (4,-0.2) node[below]{$n_0$};
\draw[<->,red] (5.1, -0.2) -- (6.9, -0.2);
\draw[red] (6,-0.2) node[below]{$n_0$};
\end{tikzpicture}
\caption{Words in $\UUU_n$ miss $K_2$.}
\label{fig:wvu}
\end{figure}

We claim that $\UUU_n = \{ w \in \LLL_n : [w] \cap \sigma^{-\lfloor n/2 \rfloor} K_1 \neq \emptyset \}$ satisfies the conclusions of the lemma. Clearly $[\UUU_n] \supset \sigma^{-\lfloor n/2 \rfloor}K_1$, and since $\nu_1$ is invariant we get
\[
\nu_1[\UUU_n] \geq \nu_1(\sigma^{-\lfloor n/2 \rfloor} K_1) = \nu_1(K_1) \geq 1-\zeta,
\]
which proves the first claim. For the second claim, 
we write $k = \lfloor \frac n2 \rfloor$ and start by showing that
given any $i,j \leq k - n_0$ and any $w\in \UUU_n$, we have
\begin{equation}\label{eqn:miss-K2}
\sigma^{k-i} [\ph_{i,j} (w)] \cap K_2 = \emptyset.
\end{equation}
Writing $v = \ph_{i,j}(w)$ and $u = w_{[k-n_0,k+n_0]}$, as shown in Figure \ref{fig:wvu}, we have
\[
\sigma^k[w] \subset \sigma^{k-i}[v] \subset \sigma^{n_0} [u].
\]
Since $\sigma^k[w]$ intersects $K_1$, we see that $\sigma^{n_0}[u]$ does as well, so $\sigma^{n_0}[u] \cap K_2 = \emptyset$ by our choice of $n_0$.
This proves \eqref{eqn:miss-K2},
which gives $\sigma^{k-i}[\ph_{i,j}\UUU_n] \subset X \setminus K_2$. By our choice of $K_2$ and invariance of $\nu_2$, we now have
\[
\nu_2([\ph_{i,j}\UUU_n]) = \nu_2(\sigma^{k-i}[\ph_{i,j}\UUU_n])
\leq \nu_2(X\setminus K_2) < \zeta.\qedhere
\]
\end{proof}

Now we prove uniqueness of the MME $\mu$ by showing that there is no MME $\nu\perp \mu$, and that $\mu$ is ergodic (so the only MME $\nu\ll\mu$ is $\mu$ itself). The first of these follows \cite[Proposition 7.2]{PYY22}. Recall that $c$ is the constant in the lower Gibbs bounds \eqref{eqn:Gibbs} and \eqref{eqn:Gibbs-2}, and $\theta,M$ are given by Proposition \ref{prop:good-lower}.

\begin{proposition}\label{prop:no-perp}
Under the hypotheses of Proposition \ref{prop:unique}, there is no MME $\nu\perp \mu$.
\end{proposition}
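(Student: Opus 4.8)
The plan is to prove Proposition~\ref{prop:no-perp} by contradiction: suppose there exists an MME $\nu \perp \mu$, and derive a conflict between the lower Gibbs bound \eqref{eqn:Gibbs-2} enjoyed by $\mu$ and the separation of $\nu$ from $\mu$ supplied by Lemma~\ref{lem:approx}. The heart of the matter is that the two-step Gibbs bound forces $\mu$ to place mass $\geq ce^{-(|u|+|v|)h}$ on \emph{every} pair of $\GGG$-cylinders glued by the specification gap $T$, and we will show this is incompatible with $\nu$ being singular to $\mu$ while simultaneously being a measure of \emph{maximal} entropy.

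First I would set up the machinery. Fix $\zeta>0$ small (to be chosen at the end), apply Lemma~\ref{lem:approx} to the mutually singular pair $(\nu,\mu)$ — note $\nu$ plays the role of $\nu_1$ and $\mu$ the role of $\nu_2$ — to obtain a word collection $\UUU\subset\LLL$ and $n_0\in\NN$ with $\nu[\UUU_n]\geq 1-\zeta$ for all $n$ and $\mu[\ph_{i,j}\UUU_n]\leq\zeta$ whenever $n\geq 2(n_0+\max(i,j))$. Because $\nu$ is itself an MME with $\nu[\UUU_n]\geq\frac12$ for large $n$, Proposition~\ref{prop:good-lower} applies to the collection $\DDD_n:=\UUU_n$: it yields $\theta>0$, $M\in\NN$, and indices $i,k\in\{0,\dots,M\}$ with $\#(\ph_{i,k}(\UUU_n)\cap\GGG)\geq\theta e^{nh}$. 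So the truncated collection $\ph_{i,k}(\UUU_n)$ contains an exponentially large family of words in $\GGG$.

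Next I would exploit the Gibbs bound for $\mu$ on this large $\GGG$-family. Writing $\WWW:=\ph_{i,k}(\UUU_n)\cap\GGG$, the bound \eqref{eqn:Gibbs} gives $\mu[w]\geq ce^{-|w|h}$ for each $w\in\WWW$, and since the cylinders $[w]$ for distinct $w\in\WWW_m$ (all of the common length $m=n-(i+k)$) are disjoint, summing yields
\[
\mu[\ph_{i,k}\UUU_n] \;\geq\; \sum_{w\in\WWW}\mu[w] \;\geq\; \#\WWW\cdot ce^{-mh} \;\geq\; \theta e^{nh}\cdot c\,e^{-(n-i-k)h} \;=\; c\theta\,e^{(i+k)h} \;\geq\; c\theta.
\]
This is a lower bound bounded away from zero, independent of $n$. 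On the other hand, provided $n$ is large enough that $n\geq 2(n_0+\max(i,k))$ — which holds for all large $n$ since $i,k\leq M$ are bounded — the second conclusion of Lemma~\ref{lem:approx} gives $\mu[\ph_{i,k}\UUU_n]\leq\zeta$. Choosing $\zeta<c\theta$ at the outset produces the contradiction $c\theta\leq\mu[\ph_{i,k}\UUU_n]\leq\zeta<c\theta$, completing the proof.

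The main subtlety, which I would be careful to handle, is the order of quantifiers: the constants $\theta,M$ from Proposition~\ref{prop:good-lower} and the constant $c$ from \eqref{eqn:Gibbs} are fixed \emph{before} $\zeta$ is chosen, so $\zeta$ can legitimately be taken smaller than $c\theta$. A second point requiring attention is that Proposition~\ref{prop:good-lower} produces indices $(i,k)$ depending on $n$, but since they always lie in the bounded range $\{0,\dots,M\}$, the separation hypothesis $n\geq 2(n_0+\max(i,k))$ is met uniformly for all sufficiently large $n$, so the two bounds genuinely apply to the same object $\ph_{i,k}\UUU_n$ for the same $n$. I expect verifying this compatibility of the two index regimes, rather than any single estimate, to be the only real obstacle; the Gibbs lower bound and the entropy-counting lower bound each do exactly one job, and the contradiction is then immediate.
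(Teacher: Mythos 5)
Your proposal is correct and follows essentially the same argument as the paper: apply Lemma~\ref{lem:approx} with $\nu_1=\nu$, $\nu_2=\mu$, use Proposition~\ref{prop:good-lower} on $\UUU_n$ to extract an exponentially large subfamily of $\GGG$-words, sum the one-step Gibbs bound \eqref{eqn:Gibbs} over it to get $\mu[\ph_{i,k}\UUU_n]\geq c\theta$, and contradict $\mu[\ph_{i,k}\UUU_n]\leq\zeta$ by choosing $\zeta<c\theta$ (with $\zeta<\tfrac12$ also needed, which your ``small $\zeta$'' absorbs). Note that despite your preamble's emphasis on the two-step bound \eqref{eqn:Gibbs-2}, your actual argument uses only \eqref{eqn:Gibbs} --- exactly as the paper does; the two-step bound is reserved for the ergodicity argument in Proposition~\ref{prop:ergodic}.
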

\begin{proof}
Suppose for a contradiction that $\nu\perp\mu$ is an  MME. 
Fix $\zeta < \min(\frac 12, c\theta)$, and apply Lemma \ref{lem:approx} with $\nu_1 = \nu$ and $\nu_2 = \mu$ to get $\UUU\subset \LLL$ and $n_0\in\NN$ such that $\nu([\UUU_n]) \geq 1-\zeta$ for all $n$, and
\begin{equation}\label{eqn:leq-beta}
\mu([\ph_{i,j} \UUU_n]) \leq \zeta
\end{equation}
for all $i,j,n$ satisfying $n \geq 2(n_0 + \max(i,j))$.

Since $\zeta<\frac 12$, we have $\nu([\UUU_n]) > \frac 12$ for all $n$, and since $\nu$ is an MME, Proposition \ref{prop:good-lower} gives  $i,j\in \{0,1,\dots, M\}$ (depending on $n$) with
\[
\#(\ph_{i,j}(\UUU_n) \cap \GGG) \geq \theta e^{nh};
\]
thus the Gibbs property \eqref{eqn:Gibbs} on $\GGG$ gives
\[
\mu([\ph_{i,j} (\UUU_n) \cap \GGG])
\geq c e^{-(n-(i+j)) h} \#(\ph_{i,j}(\UUU_n) \cap \GGG)
\geq c e^{(i+j) h} \theta \geq c\theta.
\]
For all $n\geq 2(n_0 + M)$ and this choice of $i,j \in \{0,1,\dots, M\}$, we also have \eqref{eqn:leq-beta}, which leads to
\[
\zeta \geq \mu([\ph_{i,j} \UUU_n])
\geq \mu([\ph_{i,j} (\UUU_n) \cap \GGG]) \geq c\theta.
\]
This contradicts our choice of $\zeta$ and completes the proof of Proposition \ref{prop:no-perp}.
\end{proof}

The proof of ergodicity follows \cite[Proposition 7.4]{PYY22}.

\begin{proposition}\label{prop:ergodic}
Under the hypotheses of Proposition \ref{prop:unique}, $\mu$ is ergodic.
\end{proposition}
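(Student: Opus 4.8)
The plan is to argue by contradiction, mirroring the structure of Proposition \ref{prop:no-perp} but now \emph{gluing} a ``$\mu_1$-typical'' block to a ``$\mu_2$-typical'' block via the two-step Gibbs bound \eqref{eqn:Gibbs-2}. Suppose $\mu$ is not ergodic, so there is an invariant set $E$ with $0 < \mu(E) < 1$. Writing $a = \mu(E)$ and letting $\mu_1,\mu_2$ be the normalized restrictions of $\mu$ to $E$ and $E^c$, I obtain $\mu = a\mu_1 + (1-a)\mu_2$ with $\mu_1\perp\mu_2$ invariant. Since $\mu\mapsto h_\mu$ is affine, $h = h_\mu = a\,h_{\mu_1} + (1-a)\,h_{\mu_2}$ with both $h_{\mu_\ell}\leq h$, which forces $h_{\mu_1} = h_{\mu_2} = h$; thus $\mu_1$ and $\mu_2$ are mutually singular MMEs.

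Next I would fix $\zeta < \min(\tfrac12, c\theta^2)$, with $c,\theta,M$ as in the hypotheses, and apply Lemma \ref{lem:approx} twice: once with $(\nu_1,\nu_2) = (\mu_1,\mu_2)$ to produce $\UUU,n_0$, and once with $(\nu_1,\nu_2)=(\mu_2,\mu_1)$ to produce $\mathcal{V},n_0'$. Fix $n$ large enough that $n\geq 2(n_0+M)$ and $n\geq 2(n_0'+M)$. Because $\mu_1([\UUU_n])\geq 1-\zeta > \tfrac12$ and $\mu_1$ is an MME, the conclusion of Proposition \ref{prop:good-lower} yields $i,j\leq M$ such that $U := \ph_{i,j}(\UUU_n)\cap\GGG$ satisfies $\#U\geq\theta e^{nh}$; symmetrically I obtain $i',j'\leq M$ and $V := \ph_{i',j'}(\mathcal{V}_n)\cap\GGG$ with $\#V\geq\theta e^{nh}$. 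All words in $U$ share the common length $n_1 = n-i-j$, and all words in $V$ the common length $n_2 = n-i'-j'$.

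The heart of the argument is to estimate from both sides the $\mu$-measure of the glued set $W := \bigcup_{u\in U,\,v\in V}\big([u]\cap\sigma^{-(n_1+T)}[v]\big)$. The cylinders indexed by distinct pairs $(u,v)$ are disjoint (distinct $u$ disagree on coordinates $1,\dots,n_1$, and distinct $v$ on coordinates $n_1+T+1,\dots,n_1+T+n_2$), so the two-step bound \eqref{eqn:Gibbs-2} gives
\[
\mu(W) = \sum_{u\in U,\,v\in V}\mu\big([u]\cap\sigma^{-(n_1+T)}[v]\big) \geq \#U\,\#V\, c\, e^{-(n_1+n_2)h} \geq c\theta^2 e^{(i+j+i'+j')h} \geq c\theta^2.
\]
On the other hand, since all $u\in U$ have the same length, $W \subset \sigma^{-(n_1+T)}[V] \subset \sigma^{-(n_1+T)}[\ph_{i',j'}\mathcal{V}_n]$, so invariance of $\mu_1$ together with Lemma \ref{lem:approx}(2) (in the form $\mu_1([\ph_{i',j'}\mathcal{V}_n])\leq\zeta$) gives $\mu_1(W)\leq\zeta$; likewise $W\subset [U]\subset[\ph_{i,j}\UUU_n]$ yields $\mu_2(W)\leq\zeta$. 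Hence $\mu(W) = a\mu_1(W) + (1-a)\mu_2(W) \leq \zeta < c\theta^2$, contradicting the lower bound, and completing the proof that $\mu$ is ergodic.

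The main obstacle, and the reason the two-step Gibbs bound \eqref{eqn:Gibbs-2} is essential here (rather than the one-step bound \eqref{eqn:Gibbs} that sufficed in Proposition \ref{prop:no-perp}), is controlling the \emph{correlation} between a $\mu_1$-heavy left block and a $\mu_2$-heavy right block: the lower bound on $\mu(W)$ forces positive mass on configurations that look $\mu_1$-generic on the left and $\mu_2$-generic on the right, which mutual singularity — extracted by applying Lemma \ref{lem:approx} in both directions — shows is impossible. Some care is also required to ensure all words in $U$ (and in $V$) share a common length, so that the gluing position $n_1+T$ is fixed, making the cylinders disjoint and the summation over $(u,v)$ legitimate.
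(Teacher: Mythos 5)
Your proposal is correct and follows essentially the same route as the paper's proof: decompose $\mu$ into mutually singular normalized restrictions of $\mu$ to the invariant set and its complement (both MMEs by affinity of entropy), apply Lemma \ref{lem:approx} in both directions, extract large fixed-length subcollections of $\GGG$ via Proposition \ref{prop:good-lower}, and play the two-step Gibbs bound \eqref{eqn:Gibbs-2} on the glued collection against the smallness guaranteed by Lemma \ref{lem:approx}. The only difference is cosmetic bookkeeping at the end: you bound $\mu_1(W)$ and $\mu_2(W)$ separately and combine via $\mu = a\mu_1 + (1-a)\mu_2$ (so $\zeta < c\theta^2$ suffices), whereas the paper splits the glued set itself using invariance of $P$ and reaches $2\zeta \geq c\theta^2$; both versions close the contradiction in the same way.
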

\begin{proof}
Suppose for a contradiction that $P\subset X$ is invariant and $0 < \mu(P) < 1$.
Let $\nu$ and $\nu'$ be the normalizations of $\mu$ restricted to $P$ and $P^c$, respectively.
Fix $\zeta < \min (\frac 12, \frac 12 c\theta^2)$, and apply Lemma \ref{lem:approx} twice -- once with $\nu_1=\nu$ and $\nu_2=\nu'$, and once with the roles reversed -- to obtain $\UUU,\UUU' \subset \LLL$ and $n_0 \in \NN$ such that $\nu([\UUU_n]) \geq 1-\zeta$ and $\nu'([\UUU'_n]) \geq 1-\zeta$ for every $n$, and moreover
\begin{equation}\label{eqn:nunu'}
\nu([\ph_{i,j}\UUU_n']) \leq \zeta \text{ and } \nu'([\ph_{i,j}\UUU_n]) \leq \zeta
\end{equation}
for all $i,j,n$ satisfying $n \geq 2(n_0 + \max(i,j))$.

Observe that each of $\nu$ and $\nu'$ is an MME (since entropy depends affinely on the measure, and the MME $\mu$ is a convex combination of $\nu$ and $\nu'$). Thus we can apply Proposition \ref{prop:good-lower} to both $\UUU$ and $\UUU'$, obtaining  $i,j,i',j' \in \{0,1,\dots, M\}$ (depending on $n$) such that the collections
\[
\VVV(n) := \ph_{i,j}(\UUU_n) \cap \GGG \subset \LLL_{n-(i+j)}
\text{ and } \VVV'(n) := \ph_{i',j'}(\UUU_n')\cap \GGG \subset \LLL_{n-(i'+j')}
\]
both have cardinality at least $\theta e^{nh}$.

Let $k = n-(i+j) + T$, so that $|v| + T = k$ for every $v\in \VVV(n)$. Given any such $v$, and any $v' \in \VVV'(n)$, the ``two-step'' Gibbs property in \eqref{eqn:Gibbs-2} gives
\[
\mu([v] \cap \sigma^{-k}[v']) \geq c e^{-(|v| + |v'|)h} \geq c e^{-2nh}.
\]
Summing over all such $v$ and $v'$ gives
\begin{equation}\label{eqn:mugeq}
\mu([\VVV(n)] \cap \sigma^{-k} [\VVV'(n)])
\geq c e^{-2nh} (\#\VVV(n)) (\#\VVV'(n)) \geq c \theta^2.
\end{equation}
We will get an upper bound on this measure and derive a contradiction.
Given any $x\in [\VVV(n)] \cap \sigma^{-k}[\VVV'(n)]$, we either have $x\in P^c$ or $x\in P = \sigma^{-k} P$ (since $P$ is invariant), and thus
\begin{equation}\label{eqn:V-kV}
[\VVV(n)] \cap \sigma^{-k}[\VVV'(n)]
\subset ([\VVV(n)] \cap P^c) \cup \sigma^{-k} ([\VVV'(n)] \cap P).
\end{equation}
Observe that whenever $n\geq 2(n_0 + M)$, \eqref{eqn:nunu'} gives
\[
\mu([\VVV(n)] \cap P^c) = \nu'([\VVV(n)]) \mu(P^c) \leq 
\nu'([\ph_{i,j}(\UUU_n)]) \leq \zeta
\]
and similarly
\[
\mu(\sigma^{-k}([\VVV'(n)] \cap P)) = \mu([\VVV'(n)] \cap P) \leq \zeta,
\]
so that by \eqref{eqn:V-kV}, we have
\[
\mu([\VVV(n)] \cap \sigma^{-k}[\VVV'(n)]) \leq 2\zeta.
\]
Comparing this with \eqref{eqn:mugeq} gives $2\zeta \geq c\theta^2$, contradicting our choice of $\zeta$ and completing the proof of Proposition \ref{prop:ergodic}.
\end{proof}

\bibliographystyle{amsalpha}
\bibliography{comb-words,references}

\end{document}